\begin{document}

\newtheorem{definition}{Definition}[section]
\newtheorem{definitions}[definition]{Definitions}
\newtheorem{lemma}[definition]{Lemma}
\newtheorem{proposition}[definition]{Proposition}
\newtheorem{theorem}[definition]{Theorem}
\newtheorem{cor}[definition]{Corollary}
\newtheorem{cors}[definition]{Corollaries}
\theoremstyle{remark}
\newtheorem{remark}[definition]{Remark}
\theoremstyle{remark}
\newtheorem{remarks}[definition]{Remarks}
\theoremstyle{remark}
\newtheorem{notation}[definition]{Notation}
\theoremstyle{remark}
\newtheorem{example}[definition]{Example}
\theoremstyle{remark}
\newtheorem{examples}[definition]{Examples}
\theoremstyle{remark}
\newtheorem{dgram}[definition]{Diagram}
\theoremstyle{remark}
\newtheorem{fact}[definition]{Fact}
\theoremstyle{remark}
\newtheorem{illust}[definition]{Illustration}
\theoremstyle{remark}
\newtheorem{rmk}[definition]{Remark}
\theoremstyle{definition}
\newtheorem{question}[definition]{Question}
\theoremstyle{definition}
\newtheorem{conj}[definition]{Conjecture}

\newcommand{\mcal}[1]{\mathcal{#1}}
\newcommand{\Leq}{{\mathbb L}^{\rm eq+}}
\renewcommand{\phi}{\varphi}
\renewcommand{\marginpar}[2][]{}
\renewenvironment{proof}{\noindent {\bf{Proof.}}}{\hspace*{3mm}{$\Box$}{\vspace{9pt}}}

\title{Representation embeddings, interpretation functors and controlled wild algebras}

\author{Lorna Gregory and Mike Prest\footnote{lorna.a.gregory@manchester.ac.uk, mprest@manchester.ac.uk, School of Mathematics, Alan Turing Building, University of Manchester, Manchester M13 9PL, UK \newline The authors acknowledge the support of EPSRC through Grant EP/K022490/1.}}

\maketitle

\abstract{We establish a number of results which say, roughly, that interpretation functors preserve algebraic complexity.

First we show that representation embeddings between categories of modules of finite-dimensional algebras induce embeddings of lattices of pp formulas and hence are non-decreasing on Krull-Gabriel dimension and uniserial dimension. A consequence is that the category of modules of any wild finite-dimensional algebra has width $\infty$ and hence, if the algebra is countable, there is a superdecomposable pure-injective representation.

It is conjectured that a stronger result is true:  that a representation embedding from ${\rm Mod}\mbox{-}S$ to ${\rm Mod}\mbox{-}R$ admits an inverse interpretation functor from its image and hence that, in this case, ${\rm Mod}\mbox{-}R$ interprets ${\rm Mod}\mbox{-}S$.  This would imply, for instance, that every wild category of modules interprets the (undecidable) word problem for (semi)groups.  We show that the conjecture holds for finitely controlled representation embeddings.

Finally we prove that if $R,S$ are finite dimensional algebras over an algebraically closed field and $I:{\rm Mod}\mbox{-}R\rightarrow{\rm Mod}\mbox{-}S$ is an interpretation functor such that the smallest definable subcategory containing the image of $I$ is the whole of ${\rm Mod}\mbox{-}S$ then, if $R$ is tame, so is $S$ and similarly, if $R$ is domestic, then $S$ also is domestic.}\footnote{MSC-class:  16G60 (primary) 03C60, 03D35, 16G20, 16D90 (Secondary)}

\maketitle


\section{Introduction} 
\label{intro}

Suppose that $R$, $S$ are rings.  Let ${\rm mod}\mbox{-}R$, respectively ${\rm Mod}\mbox{-}R$, denote the category of finitely presented, respectively all, right $R$-modules.  An {\bf interpretation functor} from ${\rm Mod}\mbox{-}R$ to ${\rm Mod}\mbox{-}S$ can be defined most easily as an additive functor which commutes with direct products and direct limits but the original definition, and the one from which the name arises, is that it is an interpretation in the model-theoretic sense, given, since we are in the additive context, by pp formulas; see \cite{PreInterp}, \cite[Chpts.~13, 25]{PreMAMS}) and also Section \ref{secinterp} here.  We consider, more generally, interpretation functors between {\bf definable categories}, that is, categories equivalent to {\bf definable subcategories} of module categories:  these are the subcategories closed under direct products, direct limits and pure submodules (and isomorphism).  Again, the name refers to their model-theoretic definition.

Representation embeddings are examples of interpretation functors.  Recall the following result of Eilenberg and Watts \cite{Eilen}, \cite{Watt}.

\begin{theorem}\label{ew}\marginpar{ex} Suppose that $F:{\rm Mod}\mbox{-}S \rightarrow {\rm Mod}\mbox{-}R$ is an additive, right exact functor which commutes with direct sums (equivalently, is additive and has a right adjoint).  Then $F$ is isomorphic to the functor $-\otimes_S(FS)_R$.

If $F$ is exact then $FS$ must be flat as a left $S$-module.
\end{theorem}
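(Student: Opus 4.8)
The plan is the classical Eilenberg--Watts argument, organised around producing a natural isomorphism $-\otimes_S(FS)_R\cong F$. The first step is to make $M:=FS$ into an $(S,R)$-bimodule. Its right $R$-module structure is given, being the value of $F$ on the object $S_S$. For the left $S$-action, observe that for $s\in S$ the left multiplication $\ell_s:S\to S$, $x\mapsto sx$, is an endomorphism of $S_S$, so $F(\ell_s)$ is an $R$-endomorphism of $M$; since $F$ is additive and functorial, $s\mapsto F(\ell_s)$ is a ring homomorphism $S\to\operatorname{End}(M_R)$, which is exactly a left $S$-module structure on $M$ commuting with the right $R$-action. (The parenthetical equivalence in the statement is the general fact that an additive functor between module categories has a right adjoint if and only if it preserves all colimits, and that in an abelian category preservation of arbitrary coproducts together with right exactness is the same as preservation of all colimits.)

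Next I would construct the comparison map $\eta:-\otimes_S M\to F$. For a right $S$-module $N$ and $n\in N$, right multiplication $r_n:S\to N$, $x\mapsto nx$, is $S$-linear, hence $F(r_n):M\to FN$ is $R$-linear. The assignment $(n,m)\mapsto F(r_n)(m)$ is additive in each argument (additivity in $n$ uses $r_{n+n'}=r_n+r_{n'}$ together with additivity of $F$) and is $S$-balanced, the identity $F(r_{ns})(m)=F(r_n)(sm)$ being just $r_{ns}=r_n\circ\ell_s$ after applying $F$ and unwinding the definition of the left action on $M$. It therefore factors through a homomorphism $\eta_N:N\otimes_S M\to FN$. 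Naturality in $N$ reduces to the identity $f\circ r_n=r_{f(n)}$ for $S$-linear $f:N\to N'$, applied under $F$. Finally $\eta_S$ is, modulo the canonical identification $S\otimes_S M\cong M$, the identity map, hence an isomorphism.

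It remains to upgrade this to an isomorphism at every $N$. Because both $-\otimes_S M$ and $F$ commute with direct sums, $\eta$ is an isomorphism on every free module. Given arbitrary $N$, pick a free presentation $S^{(J)}\to S^{(I)}\to N\to 0$; applying the two functors (each right exact) and using the naturality of $\eta$ produces a commutative ladder with exact rows whose two left vertical maps are isomorphisms. Since the rows exhibit $N\otimes_S M$ and $FN$ as the same cokernel, $\eta_N$ is forced to be an isomorphism (the five lemma). Hence $F\cong -\otimes_S M$ as claimed. For the last sentence, if $F$ is exact then so is $-\otimes_S M$, and this is by definition the assertion that $M={}_S(FS)$ is flat as a left $S$-module.

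I do not anticipate a real obstacle: the entire content is the careful bookkeeping of the two-sided structure on $FS$ and the verification that the obvious maps are well defined, balanced and natural. The one point needing slight care is to use, in the balancedness computation, precisely the left $S$-action induced by $F$ on $\operatorname{End}(M_R)$ through the left-multiplication endomorphisms of $S_S$; with that fixed, the remaining verifications are routine diagram chases.
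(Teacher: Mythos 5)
Your argument is correct: it is the classical Eilenberg--Watts proof (bimodule structure on $FS$ via $F(\ell_s)$, the balanced map $(n,m)\mapsto F(r_n)(m)$, isomorphism on $S$ and hence on free modules, then a free presentation plus right exactness and the five lemma), and the flatness statement follows exactly as you say. The paper itself offers no proof of this theorem --- it is recalled with citations to Eilenberg and Watts --- and your write-up matches the standard argument in those sources, so there is nothing to reconcile.
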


We are particularly interested in the case that $R$ and $S$ are finite-dimensional algebras and that $ -\otimes _SB_R: {\rm Mod}\mbox{-}S\rightarrow  {\rm Mod}\mbox{-}R $ is a {\bf representation embedding}, meaning that $ -\otimes B $ is exact and restricts to a functor (which we also refer to as a representation embedding) $ {\rm mod}\mbox{-}S\rightarrow {\rm mod}\mbox{-}R$ and this restricted functor preserves indecomposability and reflects isomorphism.  The intuition behind this definition is that it indicates that the category of (finite-dimensional) $R$-modules is at least as complex, in the sense of obtaining some classification of modules, as that of (finite-dimensional) $S$-modules.  One might expect, or even conjecture, that this particular meaning of ``complexity" will correspond to other ways of measuring the complexity of an abelian category.  Our results support this expectation, and give a partial answer to a long-standing conjecture of the second author (\cite[p.~350]{PreBk}) that ``wild implies undecidable".  This conjecture has been proved for strictly wild (and somewhat more general, \cite{PreEpi}) algebras and has been verified in many particular cases, both within and outwith the context of finite-dimensional algebras (see \cite[p.~651ff.]{PreNBK} for a relatively recent summary and references); here we prove it for controlled wild algebras.  Our results on non-decreasing complexity apply to any dimension which can be defined in terms of the lattice of pp formulas, equivalently, to any dimension which can be defined in terms of successive localisations of functor categories $({\rm mod}\mbox{-}R, {\bf Ab})^{\rm fp}$ (the category of finitely presented additive functors from the category, ${\rm mod}\mbox{-}R$ of finitely presented $R$-modules to the category, ${\bf Ab}$, of abelian groups).

In Section \ref{seclat} we show that a representation embedding induces an embedding of pp-lattices, hence these functors are non-decreasing on dimensions as above.  The method that we use there is very explicit, though we will see, in the second half of Section \ref{secinterp}, that using deeper results from the general theory allows a faster proof at least for finite-dimensional algebras. We do this by showing that if $R,S$ are finite-dimensional $k$-algebras and $I:{\rm Mod}\mbox{-}R\rightarrow {\rm Mod}\mbox{-}S$ is an interpretation functor whose image generates ${\rm Mod}\mbox{-}S$ as a definable subcategory then $I$ induces an embedding of pp-lattices. From a result of Nagase \cite[Proposition 2.3]{Nag}, it follows that the image of the right adjoint $I:{\rm Mod}\mbox{-}S\rightarrow {\rm Mod}\mbox{-}R$ of a representation embedding $F:{\rm mod}\mbox{-}R\rightarrow {\rm mod}\mbox{-}S$ (extended to the whole of ${\rm Mod}\mbox{-}R$), which is an interpretation functor, generates ${\rm Mod}\mbox{-}R$ as a definable subcategory.

In Section \ref{seccontr} we show that finitely controlled representation embeddings are reversible; this allows us to conclude that over any finitely controlled wild algebra the theory of modules interprets the word problem for groups, hence is undecidable.

Interpretation functors are much more general than representation embeddings but there is a corresponding underlying idea in that, if there is an interpretation functor $I$ from a definable category ${\cal D}$ to another ${\cal C}$ such that the definable subcategory generated by the image of $I$ is all of ${\cal C}$, then, roughly, ${\cal D}$ `contains' ${\cal C}$ in a model-theoretic sense (this is made precise using the model-theoretic terminology of sorts and imaginaries).  So if ${\cal D}$ is a definable subcategory of ${\rm Mod}\mbox{-}R$ and there is an interpretation functor from ${\cal D}$ to ${\rm Mod}\mbox{-}S$ such that the definable subcategory generated by the image of $I$ is all of ${\rm Mod}\mbox{-}S$, then we may regard ${\cal D}$, and hence ${\rm Mod}\mbox{-}R$, as `containing' ${\rm Mod}\mbox{-}S$ in a model-theoretic sense.  Therefore, if model-theoretic and algebraic measures of complexity are to align, then it should not be possible for a tame category of modules to interpret one which is wild, or for a domestic category of modules to interpret one which is non-domestic.  We prove this, in the case that ${\cal D} = {\rm Mod}\mbox{-}R$, for finite-dimensional algebras over algebraically closed fields, in Section 5 and 6 respectively.

Throughout, $k$ is a field, all algebras are $k$-algebras, all functors are assumed to be additive and, where it makes sense, $k$-linear (so, below, we could as well write $({\rm mod}\mbox{-}R, {\rm Mod}\mbox{-}k)^{\rm fp}$ in place of the equivalent functor category $({\rm mod}\mbox{-}R, {\bf Ab})^{\rm fp}$).

\section{Lattice embeddings}\label{seclat}

We will express the results here using the language and techniques of pp formulas (in $n$ free variables) for $R$-modules and the lattice, ${\rm pp}_R^n$, that they form.  There are many sources that explain this, for instance \cite{PreNBK}.  We recall that the ordering\footnote{rather, preodering, but we don't distinguish between equivalent pp formulas, that is, formulas with the same solution set in every module} on pp formulas (in the same free variables) is implication, equivalently inclusion of their solution sets:  $\psi \leq \phi$ if $\psi(M) \leq \phi(M)$ for every (finitely presented) module $M$.  The lattice ${\rm pp}_R^n$ is naturally isomorphic to the lattice of finitely generated subfunctors of the $n$-th power, $(R^n,-)$, of the forgetful functor from ${\rm mod}\mbox{-}R$ to ${\bf Ab}$ (\cite[10.2.33]{PreNBK}), the isomorphism being given by $\phi \mapsto F_\phi$ where the latter is the functor that assigns to every finitely presented module $M$, the solution set $\phi(M)$.  Every functor $F\in ({\rm mod}\mbox{-}R, {\bf Ab})$  has an essentially unique extension to a functor from ${\rm Mod}\mbox{-}R$ to ${\bf Ab}$ which commutes with direct limits; we denote this extension by $\overrightarrow{F}$, or just $F$, and we will use the term {\bf coherent functor} for such extensions.  Since the functor $M\mapsto \phi(M)$ commutes with direct limits, we can safely use $F_\phi$ to denote the solution-set functor on all modules or its restriction to finitely presented modules.  In view of this identification of pp formulas and finitely presented subfunctors of powers of the forgetful functor, \ref{latt} below says that a representation embedding $ {\rm Mod}\mbox{-}S\rightarrow  {\rm Mod}\mbox{-}R $ induces an embedding from the lattice of finitely generated subfunctors of $(S,-)$ to that of $(R^n,-)$ for a suitable $n$.

\vspace{4pt}

Suppose that $R$ and $S$ are rings and that $ -\otimes _SB_R: {\rm Mod}\mbox{-}S\rightarrow  {\rm Mod}\mbox{-}R $ sends finitely presented $S$-modules to finitely presented $R$-modules (equivalently $B_R$ is finitely presented). Choose a finite tuple $ \overline{t} =(t_1,\dots, t_n) $ from $ B $ which generates the module $ B_R$. We use this tuple to define a map $ \beta :{\rm pp}^1_S\rightarrow {\rm pp}^n_R $, where $ n $ is the length of $ \overline{t}$, as follows.

Given $ \phi \in {\rm pp}^1_S $ choose a {\bf free realisation} $ (C,c) $ of $ \phi $; that is, $C$ is finitely presented and the {\bf pp-type} of $c$ in $C$, ${\rm pp}^C(c) =\{ \psi \in {\rm pp}^1_R: c\in \psi(C)\}$, is {\bf generated by} $\phi$, meaning that it is the set of all pp formulas $\psi \geq \phi$ (e.g., see \cite[\S 1.2.2]{PreNBK}). Consider the tuple $ c\otimes \overline{t} = (c\otimes t_1, \dots, c\otimes t_n)$ from $ C\otimes B$: this module is finitely presented over $ R $ so we define $ \beta \phi  =\beta _{\overline{t}} \phi$ to be any pp formula in $ {\rm pp}^n_R $ which generates the pp-type of $ c\otimes \overline{t}$ in $ C\otimes B$. The choice of formula is not literally unique but, up to equivalence on $R$-modules, that is, up to equality in $ {\rm pp}^n_R$, it is.  So this map $\beta$ is well-defined, given $(C,c)$.

This map also is independent of choice of free realisation of $ \phi  $ since, if $ (D,d) $ is another free realisation of $ \phi  $, then there are morphisms $ f:C\rightarrow D $ and $ g:D\rightarrow C $ such that $ gfc=c $ and $ fgd=d $ so then, applying $ -\otimes B$, we obtain morphisms $ f'=f\otimes 1_B:C\otimes B\rightarrow D\otimes B $ and $ g'=g\otimes 1_B:D\otimes B\rightarrow C\otimes B $ with $ g'f'(c\otimes \overline{t})=c\otimes \overline{t} $ and $ f'g'(d\otimes \overline{t})=d\otimes \overline{t}$.  Therefore (by \cite[1.2.8]{PreNBK}) $ {\rm pp}^{C\otimes B}(c\otimes \overline{t})={\rm pp}^{D\otimes B}(d\otimes \overline{t})$, as claimed.

The above construction is based on Harland's explicit proofs on the effect of tilting functors - these are interpretation functors - in \cite[\S 4.7.1]{Har}.

Recall that a ring is said to be {\bf Krull-Schmidt} if every finitely presented (right or left - it makes no difference, e.g.~see \cite[\S 4.3.8]{PreNBK}) module is a finite direct sum of indecomposable modules, each with local endomorphism ring; so the isomorphism types of these summands, including their multiplicities, are uniquely determined.

\begin{theorem}\label{latt}\marginpar{latt} Let the bimodule $_SB_R$ be such that $B_R$ is finitely presented.  Choose a finite generating ($n$-)tuple $\overline{t}$ for $B_R$.  Then the induced map $ \beta_{\overline{t}}: {\rm pp}^1_S\rightarrow {\rm pp}^n_R $ is a homomorphism of lattices.

If $R,S$ are Krull-Schmidt and $-\otimes _SB_R$ is a representation embedding, meaning that it has the properties stated in the introduction, then $\beta_{\overline{t}}$ is an embedding of lattices.
\end{theorem}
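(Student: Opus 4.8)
The first assertion, that $\beta_{\overline t}$ is a lattice homomorphism, I would prove by working directly with free realisations and the $\otimes B$-construction. For the meet, given $\phi,\psi\in{\rm pp}^1_S$, take a free realisation $(C,c)$ of $\phi\wedge\psi$; then $c$ realises both $\phi$ and $\psi$ in $C$, so $C$ (with $c$) can be used to compute $\beta\phi$, $\beta\psi$ and $\beta(\phi\wedge\psi)$ simultaneously, and the point is that ${\rm pp}^{C\otimes B}(c\otimes\overline t)$, being the pp-type of a single tuple, is closed under the operations realised in the module, which forces $\beta(\phi\wedge\psi)=\beta\phi\wedge\beta\psi$ once one checks the obvious inequalities both ways. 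For the join, use the direct sum: if $(C,c)$ realises $\phi$ freely and $(D,d)$ realises $\psi$ freely, then $(C\oplus D,(c,d))$ is a free realisation of $\phi\wedge\psi$ (caution on which operation corresponds to the coproduct of free realisations) — more precisely one uses the standard fact (\cite[1.2.x]{PreNBK}) relating free realisations of $\phi+\psi$ to a pushout/amalgam and then applies exactness of $-\otimes B$ to transport the pushout. The monotonicity of $\beta$ is immediate from the free-realisation characterisation, since $\phi\le\psi$ gives a map of free realisations, and tensoring gives a map of $R$-modules sending $c\otimes\overline t$ to $c'\otimes\overline t$; preservation of $0$ and $1$ is a direct check.

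**The second assertion** — injectivity of $\beta_{\overline t}$ under the representation-embedding hypothesis — is the substantive part. The plan is: suppose $\phi\not\le\psi$ in ${\rm pp}^1_S$; I must produce an $R$-module witnessing $\beta\phi\not\le\beta\psi$. Since $\phi\not\le\psi$, there is a finitely presented $S$-module $M$ and $m\in\phi(M)\setminus\psi(M)$; because $S$ is Krull--Schmidt we may take $M$ indecomposable (decompose $M$, note $m$ has a component lying in $\phi$ but not $\psi$ of some indecomposable summand — here one uses that $\phi(M_1\oplus M_2)=\phi(M_1)\oplus\phi(M_2)$ and the same for $\psi$). Now apply $-\otimes B$: exactness plus the interpretation-functor formalism (Section \ref{seclat}, the identification of pp formulas with finitely generated subfunctors and the fact that $-\otimes B$ commutes with the relevant limits/colimits) should give that $m\otimes\overline t\in\beta\phi(M\otimes B)$; the real work is to show $m\otimes\overline t\notin\beta\psi(M\otimes B)$. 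The key is that $M\otimes B$ is indecomposable (that's exactly what ``preserves indecomposability'' buys us on finitely presented modules), and that the functor, applied at the level of pp-pairs, cannot collapse a nontrivial pp-pair to a trivial one on all of ${\rm mod}\mbox{-}R$ without contradicting that it reflects isomorphisms.

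**The crux** is therefore the implication: if $\beta\phi=\beta\psi$ as elements of ${\rm pp}^n_R$, then the image functor $-\otimes B$ identifies modules it should keep distinct. I expect the cleanest route is contrapositive via free realisations: let $(C,c)$ be a free realisation of $\phi$; then $(C\otimes B, c\otimes\overline t)$ is a free realisation of $\beta\phi$ over $R$ (this needs an argument — that the pp-type of $c\otimes\overline t$ really is \emph{generated} by $\beta\phi$, which is essentially the definition of $\beta$, but one must also see that $C\otimes B$ is the ``right'' carrier, i.e.\ a minimal one, which is where Krull--Schmidt and indecomposability re-enter). If $\beta\phi=\beta\psi$, pick a free realisation $(D',d')$ of $\psi$ over $S$; then $(D'\otimes B, d'\otimes\overline t)$ is also a free realisation of $\beta\phi=\beta\psi$, so by uniqueness of free realisations up to the usual back-and-forth there are $R$-maps between $C\otimes B$ and $D'\otimes B$ fixing the distinguished tuples. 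The goal is to descend these to $S$-maps between (summands of) $C$ and $D'$ witnessing $\phi\equiv\psi$, contradicting $\phi\not\le\psi$. Descending maps from $\mathrm{Mod}\mbox{-}R$ back to $\mathrm{Mod}\mbox{-}S$ is precisely where ``reflects isomorphisms'' is needed and is the main obstacle; I anticipate one cannot literally lift the maps but can argue at the level of indecomposable summands: $-\otimes B$ induces an injection on iso-classes of indecomposable finitely presented modules, and by tracking which summand of $C\otimes B$ carries $c\otimes\overline t$ and which summand of $D'\otimes B$ carries $d'\otimes\overline t$, conclude these pulled back to a common indecomposable $S$-module on which $c$-type and $d$-type coincide — forcing $\phi=\psi$. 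Making this ``tracking of the distinguished summand'' precise, and ruling out the distinguished tuple being spread across several summands in an incompatible way, is the delicate point of the proof.
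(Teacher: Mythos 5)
Your plan contains genuine gaps in both halves. For the homomorphism part you have the free-realisation calculus the wrong way round: the direct sum $(C_\phi\oplus C_\psi,(c_\phi,c_\psi))$ freely realises the \emph{join} $\phi+\psi$, while it is the pushout over $S$ amalgamating the distinguished elements that freely realises the \emph{meet} $\phi\wedge\psi$ (\cite[1.2.27, 1.2.28]{PreNBK}); your text assigns the pushout to $\phi+\psi$ and the direct sum to $\phi\wedge\psi$. Worse, your meet argument starts from a free realisation $(C,c)$ of $\phi\wedge\psi$ and proposes to ``compute $\beta\phi$, $\beta\psi$ and $\beta(\phi\wedge\psi)$ simultaneously'' from it; this is not available, since $\beta\phi$ is by definition computed from a free realisation of $\phi$, and $(C,c)$ is not one (the pp-type of $c$ is generated by $\phi\wedge\psi$, not by $\phi$). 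Finally, the inequality $\beta(\phi\wedge\psi)\geq\beta\phi\wedge\beta\psi$ is not one of the ``obvious inequalities'': in the paper's proof it is exactly here that the hypothesis that $\overline{t}$ \emph{generates} $B_R$ is used --- one forms the pushout $P$ of $c_\phi\otimes\overline{t}$ and $c_\psi\otimes\overline{t}$ over $R^n$, which freely realises $\beta\phi\wedge\beta\psi$, and the surjectivity of $\overline{t}:R^n\rightarrow B$ is what yields the comparison map from $C_{\phi\wedge\psi}\otimes B$ (the tensored pushout over $B$) to $P$ carrying $c\otimes\overline{t}$ to the distinguished tuple of $P$. Your sketch has no substitute for this step, and it is needed again later: the general case of the embedding statement uses $\beta(\phi_1\wedge\psi)=\beta\phi_1\wedge\beta\psi$.

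For the embedding part, the step you yourself flag as ``the delicate point'' is the entire content, and the route you suggest (descending maps from ${\rm Mod}\mbox{-}R$ to ${\rm Mod}\mbox{-}S$, ``tracking the distinguished summand'') is not how it can be made to work, since a representation embedding need not be full, so maps between images do not lift. The paper's mechanism is different: reduce not to an indecomposable witness $M$ of $\phi>\psi$ but to the case where the \emph{free realisation} $C_\phi$ is indecomposable. If $\beta\phi=\beta\psi$, then $f\otimes 1_B$ (where $f:C_\phi\rightarrow C_\psi$, $c_\phi\mapsto c_\psi$) admits a map $h$ back with $h\cdot(f\otimes 1_B)$ fixing $c_\phi\otimes\overline{t}$; since $C_\phi\otimes B$ is indecomposable (preservation of indecomposability) with local endomorphism ring (Krull--Schmidt), $h\cdot(f\otimes 1_B)$ is an automorphism, so $f\otimes 1_B$ is a split embedding. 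One then decomposes $C_\psi$ into indecomposables, writes $\psi=\psi_1+\dots+\psi_m$, and uses that $\beta\phi$ is $+$-irreducible (because its free realisation $C_\phi\otimes B$ is indecomposable, \cite[1.2.32]{PreNBK}) to replace $\psi$ by some $\psi_i$ with $C_{\psi_i}$ indecomposable; then the split embedding is an isomorphism, and exactness plus faithfulness of $-\otimes B$ reflect this to $f$ being an isomorphism, contradicting $\phi>\psi$. The general case decomposes $\phi$ into $+$-irreducible components and concludes by meet-preservation and modularity ($\beta\phi_1>\beta\phi_1\wedge\beta\psi$ forces $\beta\phi_1+\beta\psi>\beta\psi$, contradicting $\beta\psi=\beta\phi\geq\beta\phi_1+\beta\psi$). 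None of these ingredients --- the local-endomorphism-ring/automorphism step, $+$-irreducibility of formulas with indecomposable free realisations, the modularity argument --- is present in your proposal, so as it stands the second assertion is not proved.
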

\begin{proof}  The map $ \beta  $ is order-preserving since, if $ \psi \leq \phi  $ are pp formulas with respective free realisations $ (C_\psi ,c_\psi ) $ and $(C_\phi ,c_\phi ) $, then (\cite[1.2.21]{PreNBK}) there is a morphism $ f:C_\phi \rightarrow C_\psi  $ taking $ c_\phi  $ to $ c_\psi $. Tensoring with $ B $ gives $$ f\otimes 1_B:C_\phi \otimes B\rightarrow C_\psi \otimes B $$ taking $ c_\phi \otimes \overline{t} $ to $ c_\psi \otimes \overline{t}$, hence $$ {\rm pp}^{C_\phi \otimes B}(c_\phi \otimes \overline{t})\geq {\rm pp}^{C_\psi \otimes B}(c_\psi \otimes \overline{t}) $$ (by which we mean $ {\rm pp}^{C_\phi \otimes B}(c_\phi \otimes \overline{t}) \subseteq {\rm pp}^{C_\psi \otimes B}(c_\psi \otimes \overline{t})$) and so $ \beta \phi \leq \beta \psi $.

The map $ \beta  $ preserves the lattice operation $ + $ (that is, sup): take $ \phi ,\psi \in {\rm pp}^1_S$; then, with notation as above, $ (C_\phi \oplus C_\psi ,(c_\phi, c_\psi) ) $ is a free realisation of $ \phi +\psi $ (\cite[1.2.27]{PreNBK}). By definition $ \beta (\phi +\psi ) $ is a generator of $$ {\rm pp}^{(C_\phi \oplus C_\psi )\otimes B}((c_\phi, c_\psi )\otimes \overline{t}) ={\rm pp}^{(C_\phi \otimes B) \oplus (C_\psi \otimes B)}(c_\phi \otimes \overline{t}, c_\psi \otimes \overline{t})$$ $$={\rm pp}^{C_\phi \otimes B}(c_\phi \otimes \overline{t})+{\rm pp}^{C_\psi \otimes B}(c_\psi \otimes \overline{t}) $$ which is generated by $\beta \phi +\beta \psi$.

The map $ \beta  $ preserves the lattice operation $ \wedge  $ (that is, inf): with notation as above, consider the pushout

$\xymatrix{S \ar[r]^{c_\phi } \ar[d]_{c_\psi } & C_\phi  \ar[d]^f \\ C_\psi  \ar[r]_g & C_{\phi \wedge \psi } }$

\noindent where by the map $ c_\phi  $ we mean that which takes $ 1_S $ to $ c_\phi $ and similarly for $c_\psi$. Recall (\cite[1.2.28]{PreNBK}) that $ (C_{\phi \wedge \psi },c)$, where $ c=fc_\phi =gc_\psi $, is, as the notation indicates, a free realisation of $ \phi \wedge \psi $. Since the functor $ -\otimes B $ is a left adjoint, so preserves colimits, the diagram

$\xymatrix{B \ar[r]^{c_\phi \otimes 1_B} \ar[d]_{c_\psi \otimes 1_B} & C_\phi \otimes B \ar[d]^{f\otimes 1_B} \\ C_\psi \otimes B \ar[r]_{g\otimes 1_B} & C_{\phi \wedge \psi }\otimes B }$

\noindent is a pushout in $ {\rm mod}\mbox{-}R$. Since morphisms preserve pp formulas, both $\beta \phi$ and $\beta \psi$, and hence $ \beta \phi \wedge \beta \psi$, are in ${\rm pp}^{C_{\phi \wedge \psi }\otimes B}(c\otimes \overline{t})$, which is, by definition, generated by $ \beta (\phi \wedge \psi )$.  Therefore $\beta \phi \wedge \beta \psi \geq \beta (\phi \wedge \psi )$. We now show the reverse inequality.  Enlarge the above diagram as shown below where, if $ \overline{t}=(t_1,\dots,t_n)$, then $ \overline{t} $ also denotes the morphism, which by assumption is a surjection, which takes $ e_i=(0,\dots, 1,0,\dots,0) $ ($1$ in the $i$th position) to $ t_i$ and where $ P $ together with $k,h$, is the pushout of $ c_\phi \otimes 1_B \cdot \overline{t} = c_\phi \otimes \overline{t}$ and $  c_\psi \otimes 1_B \cdot\overline{t} = c_\psi \otimes \overline{t}$.

$\xymatrix{R^n \ar[rd]^{\overline{t}} \ar[rrd] \ar[rdd] \\ & B \ar[r]_{c_\phi \otimes 1_B} \ar[d]^{c_\psi \otimes 1_B} & C_\phi \otimes B  \ar[d]_{f\otimes 1_B} \ar[rdd]^h \\ & C_\psi \otimes B \ar[r]^{g\otimes i_B} \ar[rrd]_k & C_{\phi \wedge \psi }\otimes B \ar@{.>}[rd]^l \\ & & & P}$

\noindent Set $ \overline{p}=h \cdot c_\phi \otimes 1_B  \cdot \overline{t} = k \cdot c_\psi \otimes 1_B  \cdot \overline{t} $; that is, $\overline{p} =h(c_\phi \otimes \overline{t}) = k(c_\psi \otimes \overline{t})$ and so $ (P,\overline{p}) $ is a free realisation of $ \beta \phi \wedge \beta \psi $. Since the morphism $ \overline{t} $ is surjective we have $ h\cdot c_\phi \otimes 1_B = k \cdot c_\psi \otimes 1_B$, so there is a morphism $ l $ as shown. Since $ l(c\otimes \overline{t})=\overline{p} $ we conclude that $ \beta (\phi \wedge \psi ) \geq \beta \phi \wedge \beta \psi $, as required.

That shows that we always have a lattice homomorphism.  Now suppose that the functor $-\otimes B$ is a representation embedding and that both $R$ and $S$ are Krull-Schmidt. We must show that if $ \phi >\psi  $ in $ {\rm pp}^1_S$ then $ \beta \phi >\beta \psi $.

First we deal with the case that $ C_\phi  $ is indecomposable.  Choose a morphism $ f:C_\phi \rightarrow C_\psi  $ with $ fc_\phi =c_\psi $. If we had $ \beta \phi =\beta \psi  $ then $ f\otimes 1_B:C_\phi \otimes B\rightarrow C_\psi \otimes B $ would, by assumption, be pp-type-preserving, so there would be $ h:C_\psi \otimes B\rightarrow C_\phi \otimes B $ such that $ h\cdot f\otimes 1_B(c_\phi \otimes \overline{t})=c_\phi \otimes \overline{t}$.

Since $ h\cdot f\otimes 1_B(c_\phi \otimes \overline{t})=c_\phi \otimes \overline{t} $ it must be that $ h\cdot f\otimes 1_B $ is an automorphism of $ C_\phi \otimes B $ and hence $ f\otimes 1_B $ is a split embedding.  Decompose $ C_\psi  $ as a direct sum, $ C_1\oplus \dots \oplus C_m$, of indecomposables, decomposing $ c_\psi  $ and hence (\cite[1.2.27]{PreNBK}) $ \psi  $ accordingly, say $ \psi =\psi _1+\dots +\psi _m$. Since $ \phi >\psi $, also $ \phi >\psi _i $ for each $ i $. By hypothesis, and using the already-established properties of $ \beta $, we have $ \beta \phi =\beta \psi =\sum_i\beta \psi _i$.  Since $ -\otimes B $ is a representation embedding $ C_\phi \otimes B $ is indecomposable and hence (\cite[1.2.32]{PreNBK}) $ \beta \phi  $ is $ +$-irreducible in the lattice $ {\rm pp}^n_R$, so $ \beta \phi =\beta \psi _i $ for some $ i $.  Replacing our original choice of $\psi$ by this $\psi_i$ we may, therefore, without loss of generality, suppose that $ C_\psi  $ also is indecomposable.  Therefore $ f\otimes 1_B $ is actually an isomorphism. So, since $-\otimes B$ is a representation embedding and therefore exact and faithful, $f$ is an isomorphism - contradicting $\phi > \psi$.

For the general case, decompose $ C_\phi  $ and hence $ \phi $ as above: say $ \phi =\phi _1+\dots+\phi _p $ with each $ \phi _i $ $+$-irreducible. Some $ \phi _i$, say $ \phi _1$, is not contained in $ \psi  $, so we have $ \phi _1>\phi _1\wedge \psi $. By the case just dealt with we must have $ \beta \phi _1>\beta (\phi _1\wedge \psi )=\beta \phi _1\wedge \beta \psi $. By modularity $ \beta \phi _1+\beta \psi >\beta \psi $, but $ \beta \psi =\beta \phi \geq \beta \phi _1+\beta \psi  $ - a contradiction as required.
\end{proof}

\begin{cor}\label{KGfd} Suppose that $R$ and $S$ are Krull-Schmidt rings and there is a representation embedding from ${\rm Mod}\mbox{-}S$ to ${\rm Mod}\mbox{-}R$.  If the Krull-Gabriel dimension of $R$ is defined, then ${\rm KG}(S)\leq {\rm KG}(R)$.  Similarly for width and uniserial dimension (for these dimensions, see \cite[\S\S 7, 13]{PreNBK}).
\end{cor}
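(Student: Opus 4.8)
The plan is to derive this entirely from Theorem \ref{latt}, using two facts from the general theory that I would simply cite (\cite[\S\S 7, 13]{PreNBK}): first, that the m-dimension, the width and the uniserial dimension of a modular lattice are monotone under lattice embeddings (a sublattice embedding $L\hookrightarrow L'$ cannot make any interval, chain or tree of intervals simpler, and a finite-length interval of $L'$ restricts to one in $L$); and second, that the Krull-Gabriel dimension, the width and the uniserial dimension of a ring are, respectively, the suprema over all sorts $m$ of the corresponding invariants of the lattices ${\rm pp}^m$. Granting these, the only real work is to upgrade the conclusion of Theorem \ref{latt}, which concerns ${\rm pp}^1_S$, to every sort.

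For that I would exploit the fact that ``representation embedding'' is stable under Morita equivalence. Fix $m\geq 1$ and compose the given representation embedding $-\otimes_S B_R:{\rm Mod}\mbox{-}S\rightarrow{\rm Mod}\mbox{-}R$ with the Morita equivalences ${\rm Mod}\mbox{-}M_m(S)\simeq{\rm Mod}\mbox{-}S$ and ${\rm Mod}\mbox{-}R\simeq{\rm Mod}\mbox{-}M_m(R)$. The composite is additive, exact and commutes with direct sums, so by Theorem \ref{ew} it is $-\otimes_{M_m(S)}B'_{M_m(R)}$ for a bimodule $B'$; since Morita equivalences preserve finite presentation, preserve indecomposability and reflect isomorphism, and since $M_m(S)$ and $M_m(R)$ are again Krull-Schmidt, this composite is a representation embedding between module categories of Krull-Schmidt rings with $B'_{M_m(R)}$ finitely presented. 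Theorem \ref{latt} then gives a lattice embedding ${\rm pp}^1_{M_m(S)}\hookrightarrow{\rm pp}^{n_m}_{M_m(R)}$ for some $n_m$. (Alternatively one can re-run the construction and proof of Theorem \ref{latt} verbatim with an $m$-tuple $\overline{c}$ in place of the single element $c$, forming the $mn$-tuple $\overline{c}\otimes\overline{t}$; nothing in that argument used that the distinguished tuple had length one.)

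Translating back under Morita equivalence, ${\rm pp}^1_{M_m(S)}\cong{\rm pp}^m_S$ and ${\rm pp}^{n_m}_{M_m(R)}\cong{\rm pp}^{m n_m}_R$, so for every $m$ there is a lattice embedding ${\rm pp}^m_S\hookrightarrow{\rm pp}^{m n_m}_R$. By monotonicity, the m-dimension of ${\rm pp}^m_S$ is at most that of ${\rm pp}^{m n_m}_R$, hence at most ${\rm KG}(R)$ whenever the latter is defined; taking the supremum over $m$ yields that ${\rm KG}(S)$ is defined and ${\rm KG}(S)\leq{\rm KG}(R)$. Running the identical argument with the width, respectively the uniserial dimension, in place of the m-dimension gives the other two assertions.

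The step I expect to need the most care is precisely the passage from the single sort ${\rm pp}^1$ to all sorts: either one checks that the proof of Theorem \ref{latt} is genuinely insensitive to the length of the distinguished tuple, or, in the Morita route, one checks carefully that the three defining properties of a representation embedding survive composition with the equivalences and that the resulting bimodule $B'$ is still finitely presented on the right. Both verifications are routine but do need to be recorded. The remainder is bookkeeping with the cited lattice invariants.
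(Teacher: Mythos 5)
Your proof is correct and is essentially the paper's (unstated) argument: the corollary follows immediately from Theorem \ref{latt} together with the monotonicity of m-dimension, width and uniserial dimension under embeddings of modular lattices. The detour through matrix rings (or re-running \ref{latt} with longer tuples) to cover all sorts is harmless but not needed, since each of these invariants of $R$ can be computed from ${\rm pp}^1_R$ alone --- the m-dimension, width and uniserial dimension of ${\rm pp}^n_R$ coincide with those of ${\rm pp}^1_R$ (see the cited sections of \cite{PreNBK}) --- so the single embedding ${\rm pp}^1_S\rightarrow{\rm pp}^n_R$ provided by Theorem \ref{latt} already suffices.
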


For example, the categories of modules over the domestic algebras $\Lambda_n$ considered in \cite{PreBur} and \cite{SchrKGdim} form a strictly increasing chain with respect to ``containment" {\it via} representation embeddings.  There are obvious representation embeddings from ${\rm mod}\mbox{-}\Lambda_m$ to ${\rm mod}\mbox{-}\Lambda_n$ if $m\leq n$ but the fact that ${\rm KG}(\Lambda_n) =n+1$ precludes there being representation embeddings in the the other direction.

\begin{cor} Suppose that $R$ and $S$ are Krull-Schmidt rings and there is a representation embedding from ${\rm Mod}\mbox{-}S$ to ${\rm Mod}\mbox{-}R$.  If the width of $S$ is undefined then so is that of $R$.
\end{cor}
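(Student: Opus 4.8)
The assertion is, up to phrasing, the contrapositive of the ``width'' clause of Corollary \ref{KGfd}, so the plan is to run exactly the argument behind that corollary: extract from Theorem \ref{latt} an embedding of pp-lattices and then observe that ``width $\infty$'' is carried along it. I would spell this out rather than merely quote Corollary \ref{KGfd}, partly because width is not naturally an ordinal-valued invariant and the ``undefined'' direction deserves its own line, and partly because width is the invariant directly linked (for countable rings) to the existence of superdecomposable pure-injectives.

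First I would record the set-up. The given representation embedding ${\rm Mod}\mbox{-}S\rightarrow{\rm Mod}\mbox{-}R$ is, by definition (and consistently with the Eilenberg--Watts description of Theorem \ref{ew}), of the form $-\otimes_SB_R$, and $B=S_S\otimes_SB$ is finitely presented as an $R$-module because $S_S$ is and $-\otimes_SB$ restricts to ${\rm mod}\mbox{-}S\rightarrow{\rm mod}\mbox{-}R$. Fix a finite generating tuple $\overline t=(t_1,\dots,t_n)$ for $B_R$. Since $R$ and $S$ are both Krull--Schmidt, Theorem \ref{latt} applies and yields an embedding of lattices $\beta_{\overline t}\colon{\rm pp}^1_S\hookrightarrow{\rm pp}^n_R$.

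The heart of the matter is then the fact that width does not drop along a lattice embedding: if a modular lattice $L_0$ is isomorphic to a sublattice of a modular lattice $L$ and $L_0$ fails to have width, then $L$ fails to have width. This is built into the definition of width (\cite[\S7]{PreNBK}): failure of width in $L_0$ is exhibited by a $2^{<\omega}$-indexed tree of non-trivial intervals of $L_0$ whose successive branchings are described purely in terms of the operations $+$ and $\wedge$ and of strict inequality, and a lattice embedding preserves all of these (strict inequality because it is injective); the image of such a tree is a tree of the same kind inside an interval of $L$, and wideness of a subinterval of a modular lattice forces wideness of the whole lattice. Applying this to $\beta_{\overline t}$, if ${\rm pp}^1_S$ fails to have width then so does ${\rm pp}^n_R$.

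It remains to pass between one and $n$ free variables: one uses the standard fact that, for a fixed ring, the width of ${\rm pp}^m$ is $\infty$ for some $m\ge 1$ if and only if it is $\infty$ for $m=1$ (\cite[\S7]{PreNBK}). Hence ``width of $S$ undefined'' gives $w({\rm pp}^1_S)=\infty$, which by the previous step gives $w({\rm pp}^n_R)=\infty$, which gives that the width of $R$ is undefined. I do not expect a genuine obstacle here: the only inputs beyond Theorem \ref{latt} are this $n$-versus-$1$ reduction --- unavoidable since $\overline t$ has length $n$, but exactly the sort of reduction already invoked for Krull--Gabriel and uniserial dimension in Corollary \ref{KGfd} --- and, pedantically, getting the precise combinatorial form of the definition of width right so that ``preserved by lattice embeddings'' is manifest.
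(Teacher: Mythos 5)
Your proposal is correct and is essentially the paper's (implicit) argument: the corollary is stated as an immediate consequence of Theorem \ref{latt}, i.e.\ the lattice embedding $\beta_{\overline t}\colon{\rm pp}^1_S\hookrightarrow{\rm pp}^n_R$ carries width~$\infty$ along with it, together with the standard independence of width from the number of free variables, exactly as in Corollary \ref{KGfd}. You merely spell out the details (finite presentation of $B_R$, preservation of width-undefinedness under sublattice embeddings, the $n$-versus-$1$ reduction), all of which are the intended steps.
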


Since the width of the lattice of pp formulas for the archetypal wild algebra $k\langle X, Y\rangle$, as well as its finite-dimensional avatars such as $k\xymatrix{\bullet \ar@/^/[r] \ar[r] \ar@/_/[r] & \bullet}$, is undefined (e.g.~\cite[7.3.27]{PreNBK}), we get the next result, where the second statement follows from a result of Ziegler (\cite[7.8(2)]{Zie}).

\begin{cor} If $R$ is a finite-dimensional algebra of wild representation type then the width of the lattice of pp formulas for $R$-modules is undefined.  In particular if $R$ is countable then there will be a superdecomposable pure-injective module.
\end{cor}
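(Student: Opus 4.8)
\medskip

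The plan is to read this off from the preceding corollary, once a suitable finite-dimensional wild test algebra has been chosen. First I would fix the path algebra $K_3$ of the generalised Kronecker quiver $k\xymatrix{\bullet \ar@/^/[r] \ar[r] \ar@/_/[r] & \bullet}$ with three arrows, a wild algebra; since it is finite-dimensional it is Krull-Schmidt. By the standard formulation of wild representation type (together with the classical reduction showing that a $k\langle X,Y\rangle$-wild algebra is $K_3$-wild), if $R$ is of wild type then there is a representation embedding $-\otimes_{K_3}B_R\colon \mathrm{Mod}\mbox{-}K_3\rightarrow \mathrm{Mod}\mbox{-}R$ in exactly the sense of the introduction (exact, with $B_R$ finitely presented, preserving indecomposability and reflecting isomorphism). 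As $R$ is also finite-dimensional, the hypotheses of the preceding corollary --- and of the Krull-Schmidt clause of Theorem~\ref{latt} --- are met.

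Next I would invoke the computation (\cite[7.3.27]{PreNBK}) that the width of the lattice of pp formulas for $K_3$-modules is undefined. Applying the preceding corollary with $S=K_3$ then gives at once that the width of ${\rm pp}_R$ is undefined; this is the first assertion. (Equivalently, Theorem~\ref{latt} yields a lattice embedding ${\rm pp}^1_{K_3}\hookrightarrow {\rm pp}^n_R$, and width, being one of the invariants determined by the pp-lattice, cannot decrease along such an embedding.)

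For the second assertion, assume $R$ is countable. Having just shown that ${\rm pp}_R$ has undefined width, I would finish by quoting Ziegler's theorem (\cite[7.8(2)]{Zie}): over a countable ring, undefined width of the pp-lattice forces the existence of a superdecomposable pure-injective module.

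There is no genuine obstacle; the proof is an assembly of the preceding corollary, the external input \cite[7.3.27]{PreNBK}, and Ziegler's theorem. The only subtlety worth flagging is that one must \emph{not} run the argument with $k\langle X,Y\rangle$ itself as the source: that ring is not Krull-Schmidt, since its regular representation is an indecomposable finitely presented module with non-local endomorphism ring, so the preceding corollary does not apply to it. Passing to the finite-dimensional avatar $K_3$ removes this difficulty, at the (classical) cost of knowing that wildness may be tested on $K_3$.
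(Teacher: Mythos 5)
Your proof is correct and follows essentially the same route as the paper: it combines the preceding corollary (applied with the finite-dimensional wild avatar of $k\langle X,Y\rangle$, which is Krull-Schmidt, as source of the representation embedding), the known undefinedness of width for that algebra from \cite[7.3.27]{PreNBK}, and Ziegler's theorem \cite[7.8(2)]{Zie} for the countable case. Your remark about not using $k\langle X,Y\rangle$ itself (since it is not Krull-Schmidt) is exactly the point of the paper's mention of the finite-dimensional avatar.
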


\section{Interpretation functors}\label{secinterp}\marginpar{secinterp}

The notion of interpretation comes from model theory.  Roughly, a structure interprets another if the first ``definably contains" the second.  This is made precise by using the notion of definable sorts (or ``imaginaries").  In the additive context we usually want the additive structure to be preserved by any interpretation, and this means that the formulas used to specify the interpretation should be pp formulas.  This also has the result that an interpretation is functorial.  It has been proved (\cite[7.2]{KrExactDef}, \cite[12.9]{PreMAMS}) that the interpretation functors between definable categories of modules are exactly those which preserve direct products and direct limits - examples are representation embeddings, representable functors $(A,-)$ when $A$ is a finitely presented module, ${\rm Ext}^n(A,-)$ when $A$ is FP$_{n+1}$ (\cite[\S 4]{PreInterp}).  But we also need the original definition from \cite{PreInterp}, repeated below\footnote{though it is not the most general form since definable categories may be definable subcategories of rings with many objects; for the general form see \cite[Chpt.~25]{PreMAMS}}, not least because this is often how interpretation functors are, in practice, specified.

An {\bf interpretation functor} $I:\mathcal{D}\rightarrow {\rm Mod}\mbox{-}S$, where ${\cal D}$ is a definable category is specified (up to equivalence) by giving a pp-$m$-pair $\phi/\psi$ - that is a pair of pp formulas each with $m$ free variables and with $\phi \geq \psi$ - and, for each $s\in S$, a pp-$2m$-formula (one with $2m$ free variables) $\rho_s$ such that, for all $M$ in the definable subcategory $\mathcal{D}$, the solution set $\rho_s(M,M)\subseteq M^m\times M^m$ defines an endomorphism of $\phi(M)/\psi(M)$ as an abelian group, and such that, together, these definable actions of elements of $S$ give $\phi(M)/\psi(M)$ the structure of an $S$-module with the action of $s\in S$ being given by $\rho_s$ (see \cite{PreInterp} or \cite[18.2.1]{PreNBK}).

In the following remark, we spell out the conditions on a pp-$2m$-formula $\rho$ which ensure that it defines an endomorphism of the solution set, $\phi(M)/\psi(M)$, of a pp-$m$-pair $\phi/\psi$ on a module $M$.  The intention is that $\rho$ should define an endomorphism of $\phi(M)/\psi(M)$ by sending $\overline{a} +\psi(M)$ with $\overline{a}\in\phi(M)$ to $\overline{b}+\psi(M)$, where $\overline{b}\in\phi(M)$ with $(\overline{a}, \overline{b}) \in \rho(M)$. Condition (1) means that for every $\overline{a}\in \phi(M)$ there is such an $\overline{b}\in\phi(M)$ and condition (2) ensures that if $\overline{a}\in\psi(M)$ then $\overline{b}\in\psi(M)$. Hence $\rho$ defines a well-defined map (necessarily additive since $\rho$ is pp).

\begin{remark}
If $\phi/\psi$ is a pp-$m$-pair, $\rho$ is a pp-$2m$-pair and $M$ is a module then the solution set $\rho(M,M)\subseteq M^m\times M^m$ defines an endomorphism of $\phi(M)/\psi(M)$ if and only if:

(1) for every $\overline{a}\in \phi(M)$ there is $\overline{b} \in \phi(M)$ such that $(\overline{a}, \overline{b})\in \rho(M)$;

(2) if $\overline{a}\in \psi(M)$ and $(\overline{a}, \overline{b}) \in \rho(M)$ then $\overline{b} \in \psi(M)$.

Each condition may be expressed by saying that an implication between pp formulas holds in $M$, namely:

(1) $\phi(\overline{x})\rightarrow \exists \overline{y} [\phi(\overline{y})\wedge \rho(\overline{x}, \overline{y})]$ and

(2) $\exists \overline{x} [\psi(\overline{x})\wedge \rho(\overline{x}, \overline{y})] \rightarrow \psi(\overline{y})$.

So to say that these hold for every $M$ in a definable category ${\cal D}$ is equivalent to the condition that the corresponding pp-pairs be closed on ${\cal D}$, where a pp-pair $\phi/\psi$ is {\bf closed} on $M$ if $\phi(M)=\psi(M)$ (we also say that $\phi/\psi$ is {\bf closed} on ${\cal D}$ if it is closed on every $M\in {\cal D}$; definable subcategories are exactly those obtained by specifying that some set of pp-pair be closed \cite[3.4.7]{PreNBK}).
\end{remark}

There is the following algebraic characterisation of interpretation functors.

\begin{theorem} (\cite[25.3]{PreMAMS}) An (additive as always) functor $I:{\cal C} \rightarrow {\cal D}$ between definable categories is an interpretation functor iff it commutes with direct products and direct limits.
\end{theorem}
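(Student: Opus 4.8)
The plan is to prove both implications. For the forward one, suppose $I:{\cal D}\rightarrow{\rm Mod}\mbox{-}S$ is given by a pp-$m$-pair $\phi/\psi$ together with pp-$2m$-formulas $\rho_s$ ($s\in S$) as in the definition above, so $I(M)=\phi(M)/\psi(M)$ with the action of $s$ on it defined by $\rho_s(M)$. Solution sets of pp formulas commute with direct products and with direct limits, and definable subcategories are closed under both operations; hence the assignments $M\mapsto\phi(M)$ and $M\mapsto\psi(M)$ from ${\cal D}$ to ${\bf Ab}$ commute with direct products and direct limits, and since ${\bf Ab}$ has exact products and exact direct limits so does the quotient $M\mapsto\phi(M)/\psi(M)$. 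Likewise $\rho_s(\prod_iM_i)=\prod_i\rho_s(M_i)$ and $\rho_s(\varinjlim_iM_i)=\varinjlim_i\rho_s(M_i)$, so the $S$-module structure is transported correctly; thus $I$ commutes with direct products and direct limits. A general definable target ${\cal D}$ is a definable subcategory of some ${\rm Mod}\mbox{-}S$ and the inclusion preserves products and direct limits, so nothing is lost.

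For the converse, suppose $I:{\cal C}\rightarrow{\cal D}$ commutes with direct products and direct limits. Embedding ${\cal D}$ as a definable subcategory of some ${\rm Mod}\mbox{-}S$ and composing with the (product- and direct-limit-preserving) inclusion, it suffices to treat $I$ as a functor ${\cal C}\rightarrow{\rm Mod}\mbox{-}S$ and to exhibit pp-data whose associated interpretation functor agrees with $I$; that interpretation functor then automatically factors through ${\cal D}$. Let $U:{\rm Mod}\mbox{-}S\rightarrow{\bf Ab}$ be the forgetful functor, so $UI:{\cal C}\rightarrow{\bf Ab}$ also commutes with direct products and direct limits. The functors ${\cal C}\rightarrow{\bf Ab}$ with this property form an abelian category which is identified with the category of pp-pairs for ${\cal C}$, morphisms being given by pp formulas modulo equivalence on ${\cal C}$; so there are a pp-$m$-pair $\phi/\psi$ and a natural isomorphism $UI\cong F_\phi/F_\psi$ of functors on ${\cal C}$. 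For each $s\in S$, multiplication by $s$ is a natural endotransformation of $U$, hence induces a natural endotransformation of $F_\phi/F_\psi$, which by the same identification is represented by a pp-$2m$-formula $\rho_s$; that is, for every $M\in{\cal C}$ the endomorphism of $\phi(M)/\psi(M)$ defined by $\rho_s(M)$ is multiplication by $s$ under the isomorphism $\phi(M)/\psi(M)\cong UI(M)=I(M)$.

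It remains to verify that $(\phi/\psi;\{\rho_s\}_{s\in S})$ is interpretation data with associated functor $I$. Conditions (1) and (2) of the Remark hold on every $M\in{\cal C}$ because $\rho_s(M)$ already defines a (well-defined) endomorphism of $\phi(M)/\psi(M)$; and the $S$-module identities --- additivity of $s\mapsto\rho_s$, compatibility with the multiplication of $S$, and $\rho_1={\rm id}$ --- hold on every $M\in{\cal C}$ because they hold for the corresponding endomorphisms of $UI(M)=I(M)$. Each such identity is an equality of two pp-definable endomorphisms of $\phi/\psi$, equivalently the closedness on $M$ of an explicit pp-pair; so holding for all $M\in{\cal C}$ is precisely the assertion that these pp-pairs are closed on ${\cal C}$, and therefore the data defines an interpretation functor $J:{\cal C}\rightarrow{\rm Mod}\mbox{-}S$. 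By construction $UJ=F_\phi/F_\psi\cong UI$ via an isomorphism intertwining the $S$-actions, so $J\cong I$; hence $I$ is an interpretation functor.

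The main obstacle is the input used in the converse: the identification of the abelian category of product- and direct-limit-preserving functors ${\cal C}\rightarrow{\bf Ab}$ with the category of pp-pairs for ${\cal C}$ and pp-formula morphisms, in the generality of an arbitrary definable category ${\cal C}$, so that when ${\cal C}$ is a proper definable subcategory pp formulas and pp-formula morphisms are read modulo the coarser equivalence attached to ${\cal C}$. Once that dictionary is available the rest is bookkeeping; the only point needing care is the conversion of $S$-module relations holding pointwise on all of ${\cal C}$ into closedness of pp-pairs on ${\cal C}$, which is exactly where it matters that ${\cal C}$ is a definable category.
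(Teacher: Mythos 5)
The paper does not prove this statement at all: it is quoted with a citation to \cite[25.3]{PreMAMS}, so there is no internal proof to compare against. Judged on its own terms, your argument is essentially sound. The forward direction is the standard observation that pp-definable subgroups commute with products and direct limits, so the sort $\phi(-)/\psi(-)$ and the definable $S$-action do too. The converse is where all the content sits, and you have located it correctly: everything rests on the identification of the category of functors ${\cal C}\rightarrow{\bf Ab}$ commuting with direct products and direct limits with ${\mathbb L}^{\rm eq+}({\cal C})$ (objects the pp-pairs read modulo equivalence on ${\cal C}$, morphisms the pp-definable maps) --- a fact the paper itself states as background when introducing ${\rm fun}({\cal D})$. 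Be aware, though, that this input is not really weaker than the theorem: it \emph{is} the theorem in the special case of ${\bf Ab}$-valued functors (an interpretation functor into ${\rm Mod}\mbox{-}{\mathbb Z}$ is just a pp-pair), together with the morphism-level statement that natural transformations between such sorts are pp-definable. So your proof is a bootstrapping of the known ${\bf Ab}$-valued case up to an arbitrary definable target by definably encoding the $S$-action, which is legitimate given the stated background and is in the spirit of how the cited source argues; but if that identification were not granted, the hard direction would remain unproved. Two smaller points: your argument works at the level of generality of the paper's definition (a ring $S$ with one object; the footnoted many-object version would need the many-sorted analogue), and the final step ``$J\cong I$, hence $J$ lands in ${\cal D}$'' is fine but worth saying explicitly, since the theorem is stated for functors into a definable category ${\cal D}$ rather than into ${\rm Mod}\mbox{-}S$. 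The bookkeeping converting the $S$-module identities into closure of pp-pairs on ${\cal C}$ is exactly as in the paper's Proposition \ref{extendtofinax}, so that part is unobjectionable.
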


In particular representation embeddings are examples; more generally, since tensoring with a finitely presented module preserves both direct products (\cite[S\"{a}tze 1,2]{Len}) and direct limits we have the following.

\begin{cor}\label{repembinterp}\marginpar{repembinterp} If $_SB_R$ is a bimodule with $_SB$ finitely presented then $-\otimes_SB_R:{\rm Mod}\mbox{-}S \rightarrow {\rm Mod}\mbox{-}R$ is an interpretation functor.
\end{cor}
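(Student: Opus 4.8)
The statement is a corollary of the theorem just quoted, so the argument is short. Since ${\rm Mod}\mbox{-}S$ and ${\rm Mod}\mbox{-}R$ are definable categories (each module category is a definable subcategory of itself), by that theorem it suffices to verify that the functor $T=-\otimes_S B_R:{\rm Mod}\mbox{-}S\rightarrow{\rm Mod}\mbox{-}R$ commutes with direct products and with direct limits. Commutation with direct limits requires nothing special: $T$ is a left adjoint, with right adjoint ${\rm Hom}_R(B_R,-):{\rm Mod}\mbox{-}R\rightarrow{\rm Mod}\mbox{-}S$ (the $S$-module structure on such a Hom group coming from the left $S$-action on $B$), hence $T$ preserves all colimits and in particular filtered ones. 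One could equally just quote that tensoring commutes with direct limits in either variable.

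For direct products I would use the hypothesis that $_SB$ is finitely presented, which is precisely what makes the Lenzing argument \cite[S\"{a}tze 1, 2]{Len} run. Let $(N_i)_{i\in\Lambda}$ be a family of right $S$-modules; it is enough to show that the canonical map $\eta\colon(\prod_i N_i)\otimes_S B\rightarrow\prod_i(N_i\otimes_S B)$ is bijective, since $\eta$ is automatically $R$-linear and so bijectivity makes it an isomorphism of $R$-modules; thus we may work with underlying abelian groups. Fix a finite presentation $S^m\rightarrow S^n\rightarrow B\rightarrow 0$ of $_SB$. For any right $S$-module $N$, applying $N\otimes_S-$ (right exact) together with the natural identifications $N\otimes_S S^k\cong N^k$ gives a right-exact sequence $N^m\rightarrow N^n\rightarrow N\otimes_S B\rightarrow 0$, natural in $N$. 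Taking $N=\prod_i N_i$ on the one hand, and applying the sequence to each $N_i$ and then forming the product over $i$ on the other hand (products are exact in a module category), and using the canonical isomorphisms $(\prod_i N_i)^k\cong\prod_i N_i^k$ (a finite power commutes with an arbitrary product, compatibly with the maps coming from the presentation), one obtains a commutative diagram with exact rows whose two left-hand vertical maps are these isomorphisms and whose right-hand vertical map is $\eta$. The five lemma (only its cokernel-end instance is needed) now gives that $\eta$ is an isomorphism.

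Having checked closure under both direct products and direct limits, the theorem yields that $T$ is an interpretation functor. I do not anticipate any genuine obstacle: the direct-limit part is formal, and the only real content is the direct-product computation, which is the classical Lenzing lemma whose proof is the cokernel chase just sketched, made possible exactly by the finiteness of the presentation of $_SB$.
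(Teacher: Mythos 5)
Your argument is correct and is essentially the paper's own: the corollary follows from the characterisation of interpretation functors as the additive functors commuting with direct products and direct limits, with preservation of direct limits being formal for a tensor functor and preservation of direct products being Lenzing's result \cite[S\"{a}tze 1,2]{Len} for tensoring with a finitely presented module. The only difference is that you spell out Lenzing's finite-presentation/cokernel argument, whereas the paper simply cites it.
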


Note that when $R$ and $S$ are both $k$-algebras and $s_1,\dots ,s_n$ is a basis for $S$ over $k$, then we need only define $\rho_{s_1},\dots ,\rho_{s_n}$ and extend $k$-linearly.  Indeed, in order to specify an interpretation functor it is enough that the actions of a generating set of $S$ (as a $k$-algebra) be specified and, if $S$ is finitely presented as an algebra, then only finitely many conditions (along the lines of those seen in the next proof) need be added to ensure that we have an $S$-action.

\begin{definition} The {\bf Ziegler spectrum}, ${\rm Zg}_R$ of a ring $R$ is the topological space which has, for its points, the isomorphism classes of indecomposable pure-injective (right) $R$-modules and with a basis of open sets being the sets:
\[(\phi/\psi) = \{ N\in {\rm Zg}_R: \phi(N)/\psi(N)\neq 0\}\]
where $\phi>\psi$ ranges over pairs of pp formulas; equivalently this basis consists of the sets:
\[(F) =\{ N\in {\rm Zg}_R: FN\neq 0\}\]
for $F:{\rm Mod}\mbox{-}R\rightarrow {\rm Ab}$ a coherent functor.

We use the fact that each of these basic open sets is compact (in the sense that every open covering is finite).  Furthermore, if $R$ is a finite-dimensional algebra, then each indecomposable finite-dimensional $R$-module is pure-injective, hence a point of ${\rm Zg}_R$, indeed an isolated point and, together, the finite-dimensional points are dense in ${\rm Zg}_R$ (see the background references, for instance \cite[Section 5.3]{PreNBK} for all this).
\end{definition}

\begin{definition}
We say that a definable subcategory $\mathcal{D}\subseteq {\rm Mod}\mbox{-}R$ is {\bf finitely axiomatisable} if there exist finitely many pp-pairs $\phi_1/\psi_1,\dots ,\phi_t/\psi_t$ such that
\[\mathcal{D}=\{M\in{\rm Mod}\mbox{-}R  \ \vert \  \phi_i(M) =\psi_i(M) \text{ for } 1\leq i\leq t\}.\]
Equivalently, a definable subcategory is finitely axiomatisable if the corresponding closed subset of the Ziegler spectrum is the complement of a compact open subset \cite[4.9]{Zie}.
\end{definition}

In connection with proving undecidability results, given an interpretation functor $I:\mathcal{D}\rightarrow {\rm Mod}\mbox{-}R$ it is useful to be able to replace $\mathcal{D}$ by a finitely axiomatisable subcategory $\mathcal{D}'\supseteq\mathcal{D}$ and $I$ by an interpretation functor $I'$ which extends $I$.  For then, if the theory of ${\cal D}'$ interprets the word problem for groups, so does the theory of ${\rm Mod}\mbox{-}R$ (since the conditions for membership of ${\cal D}'$ can be said as a single sentence).

\begin{proposition}\label{extendtofinax}
Let $S,R$ be $k$-algebras with $S$ finite-dimensional, more generally \cite[2.6.4]{CohNew}, with $S$ finitely presented as a $k$-algebra. Suppose that $\mathcal{D}\subseteq {\rm Mod}\mbox{-}R$ is a definable subcategory and that $I:\mathcal{D}\rightarrow {\rm Mod}\mbox{-}S$ is an interpretation functor. Then $I$ may be extended to an interpretation functor from $\mathcal{D}'$ to ${\rm Mod}\mbox{-}S$ for some finitely axiomatisable definable subcategory $\mathcal{D}'\supseteq \mathcal{D}$ of ${\rm Mod}\mbox{-}R$.
\end{proposition}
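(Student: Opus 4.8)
The plan is to unpack the data specifying $I$ into a family of closure requirements on pp-pairs over $R$, to observe that finite presentability of $S$ makes this family finite, and then to let $\mathcal{D}'$ be the definable subcategory of ${\rm Mod}\mbox{-}R$ cut out by exactly those finitely many closed pp-pairs. Recall that $I$ consists of a pp-$m$-pair $\phi/\psi$ over $R$ together with, for $s$ ranging over a fixed finite $k$-algebra generating set $g_1,\dots,g_r$ of $S$ (by the comment following \ref{repembinterp} it is enough to give the actions of algebra generators), pp-$2m$-formulas $\rho_{g_i}$. By the displayed Remark above, the assertion that this data makes the action of each $g_i$ an endomorphism of $\phi(M)/\psi(M)$ for every $M$ in a given definable subcategory is exactly the assertion that finitely many explicit pp-pairs are closed on that subcategory — the pairs encoding conditions (1) and (2) of that Remark for each $\rho_{g_i}$. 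The one remaining requirement is that these endomorphisms assemble into an $S$-module structure; here $\rho_1:=\bigwedge_{j\le m}(y_j=x_j)$ trivially acts as the identity, and the $k$-action is the one already carried by the $k$-space $\phi(M)/\psi(M)$ (as $k$ maps into the centre of $R$), so what is left is that the $\rho_{g_i}$, qua endomorphisms, satisfy the defining relations of $S$.

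To turn that into finitely many closed-pp-pair conditions I would use that the class of pp-$2m$-formulas is closed under the operations which, on solution sets defining endomorphisms of $\phi(M)/\psi(M)$, realise addition, composition and $k$-scaling — e.g.\ $\rho\cdot\sigma:=\exists\bar z\,(\sigma(\bar x,\bar z)\wedge\rho(\bar z,\bar y))$, and similarly for the others. Hence from $\rho_{g_1},\dots,\rho_{g_r}$ one obtains, compatibly, a pp-$2m$-formula $\rho_w$ for each element $w$ of the free $k$-algebra on $r$ generators; since $S$ is finitely presented as a $k$-algebra (for the shape of such a presentation see \cite[2.6.4]{CohNew}) there are only finitely many defining relations $w_1,\dots,w_l$, and the $S$-module condition collapses to requiring each $\rho_{w_j}$ to act as $0$, i.e.\ to the closure of one more pp-pair, namely $(\psi(\bar y)+\exists\bar x[\phi(\bar x)\wedge\rho_{w_j}(\bar x,\bar y)])/\psi(\bar y)$ (and, if one wants to be cautious, the pair $\psi/(\phi\wedge\psi)$ recording $\psi\le\phi$). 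Collecting everything gives a finite set $\Sigma$ of pp-pairs, each of which is closed on $\mathcal{D}$ precisely because $I$ really is an interpretation functor on $\mathcal{D}$.

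Now let $\mathcal{D}'\subseteq{\rm Mod}\mbox{-}R$ be the definable subcategory axiomatised by closure of the members of $\Sigma$. By construction it is finitely axiomatisable and contains $\mathcal{D}$. The same pp-data $\phi/\psi$, $(\rho_{g_i})_i$ then defines, for every $M\in\mathcal{D}'$, an $S$-module $I'M:=\phi(M)/\psi(M)$, functorially in $M$ and commuting with direct products and direct limits (pp-definable subquotients and the maps between them induced by module homomorphisms have these properties), so by the algebraic characterisation of interpretation functors $I'\colon\mathcal{D}'\to{\rm Mod}\mbox{-}S$ is an interpretation functor; and it agrees with $I$ on $\mathcal{D}$, being given by the same pp-data.

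The main obstacle is exactly the reduction in the second paragraph: a priori the $S$-module axioms for $\phi(M)/\psi(M)$ involve one instance of associativity, distributivity and $k$-bilinearity for \emph{each} triple of elements of $S$, and finite presentability of $S$ as a $k$-algebra is precisely what lets these be collapsed to ``the generators act by endomorphisms and the finitely many defining relations are satisfied'' — given that, $g_i\mapsto[\rho_{g_i}]$ extends uniquely to a $k$-algebra homomorphism $S\to\operatorname{End}_k(\phi(M)/\psi(M))$. A subsidiary, essentially routine point is to check that the formulas $\rho_w$ produced by the closure operations genuinely compute the intended composites and sums \emph{on the subquotient} $\phi(M)/\psi(M)$ rather than on $\phi(M)$, which is where conditions (1) and (2) for the generators are used.
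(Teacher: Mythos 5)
Your proposal is correct and follows essentially the same route as the paper: encode conditions (1) and (2) of the Remark for the generators' formulas $\rho_{g_i}$ as closed pp-pairs, use finite presentability of $S$ to reduce the $S$-module axioms to finitely many pp-expressible relation conditions, and let $\mathcal{D}'$ be the finitely axiomatisable subcategory cut out by these. The only difference is cosmetic: the paper illustrates the relation conditions in the special case where the $s_i$ form a $k$-basis (via structure constants), while you write out the general case with formulas $\rho_w$ for words, which is exactly the ``same ideas but messy'' general case the paper alludes to.
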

\begin{proof}
Suppose that data defining $I$ (in the sense above) are:  a pp-$m$-pair $\phi/\psi$  which gives the object part of $I$;  pp-$2m$-formulas $\rho_{s_1}, \dots, \rho_{s_n}$ which define the actions of a chosen finite generating set $s_1, \dots, s_n$ of $S$ as a $k$-algebra.  We will show how to write down finitely many conditions which express that this data define an interpretation functor.  Each condition will be an implication between pp formulas, that is, closure of a pp-pair.  So, together, these will cut out a finitely axiomatisable subcategory ${\cal D}'$ of ${\rm Mod}\mbox{-}R$ which contains ${\cal D}$ and on which this data defines a functor $I'$ extending $I$ as required.

The conditions that each $\rho_{s_i}$ defines a well-defined function on $\phi(-)/\psi(-)$ have already been observed, above, to be implications of pp-pairs, so it remains to say that, together, they do define an action of $S$ on $\phi(-)/\psi(-)$.  The point is that each relation between the $s_i$ is the condition that a certain noncommutative polynomial $p$ over $k$ is such that $p(s_1,\dots, s_n)=0$, and the translation of this to the same condition on the actions of the $\rho_{s_i}$ on $\phi$ modulo $\psi$ is expressible by an implication between pp formulas.  We illustrate this with the case where the $s_i$ form a $k$-basis for $S$; the general case uses the same ideas but is messy to write down.

So suppose that the $s_i$ form a basis for $S$.  For $1\leq i,j\leq n$, let $\alpha_{ij}^1,\dots ,\alpha_{ij}^n\in k$ be such that \[s_is_j= \alpha_{ij}^1s_1+...+\alpha_{ij}^ns_n.\]
Then an implication of pp-$m$-formulas which expresses that the composition of the action defined by $\rho_i$ followed by that of $\rho_j$ does indeed give the correct linear composition of actions of $\rho_{s_1}, \dots, \rho_{s_n}$ is:

\[\phi(\overline{x}) \rightarrow \exists \overline{u}, \overline{v}, \overline{w}_1,\dots ,\overline{w}_n \ \rho_{s_i}(\overline{x},\overline{u}) \wedge \rho_{s_j}(\overline{u},\overline{v}) \wedge \bigwedge_{l=1}^n \rho_{s_l}(\overline{x},\overline{w}_l) \wedge \, \psi(\overline{v} -\sum_{l=1}^n \alpha_{ij}^l \overline{w}_l).\]

If we take ${\cal D}'$ to be the finitely axiomatised subcategory of ${\rm Mod}\mbox{-}R$ defined by the finitely many pp-pairs whose closures are exactly these implications and those mentioned earlier, then the data $(\phi/\psi ;\rho_{s_1},\dots ,\rho_{s_n})$ defines an interpretation functor $I'$ from $\mathcal{D}'$ to ${\rm Mod}\mbox{-}S$ which extends $I$.
\end{proof}

Associated to each definable category ${\cal D}$ there is a skeletally small abelian category ${\rm fun}({\cal D})$ which may be obtained as the category of functors from ${\cal D}$ to ${\bf Ab}$ which commute with direct products and direct limits and which, alternatively, is obtained as the category, ${\mathbb L}^{\rm eq+}({\cal D})$, of pp-sorts on ${\cal D}$.  The latter has, for its objects, the sorts $\phi/\psi$ corresponding to pp-pairs $\psi\leq \phi$ and, for its morphisms, the pp-definable-with-respect-to-${\cal D}$ maps between sorts.  In the case that ${\cal D}$ is a module category ${\rm Mod}\mbox{-}R$, we write ${\rm fun}\mbox{-}R ={\rm fun}({\rm Mod}\mbox{-}R)$ and have that ${\rm fun}\mbox{-}R = ({\rm mod}\mbox{-}R, {\bf Ab})^{\rm fp}$ is equivalent to the category of coherent functors on ${\rm Mod}\mbox{-}R$; since every finitely presented functor on ${\rm mod}\mbox{-}R$ is a subquotient of a power of the forgetful functor one sees why the objects also correspond to pp-pairs.  We also set ${\rm Fun}\mbox{-}R = ({\rm mod}\mbox{-}R, {\bf Ab})$ to be the locally coherent Grothendieck abelian category which has ${\rm fun}\mbox{-}R$ for its category of finitely presented objects.  If ${\cal D}$ is a definable subcategory of ${\rm Mod}\mbox{-}R$ then ${\rm fun}({\cal D})$ (respectively ${\rm Fun}({\cal D})$) is the quotient category of ${\rm fun}\mbox{-}R$ (resp.~${\rm Fun}\mbox{-}R$) by the Serre (resp.~torsion) subcategory ${\cal S}_{\cal D}$ of those functors which are $0$ on ${\cal D}$ (that is, those corresponding to pp-pairs which are closed on ${\cal D}$).  Furthermore the relation between ${\rm fun}({\cal D})$ and ${\rm Fun}({\cal D})$ is just as in the case ${\cal D} = {\rm Mod}\mbox{-}R$.  For the equivalence between the category $({\rm mod}\mbox{-}R, {\bf Ab})^{\rm fp}$ of finitely presented functors on finitely presented modules and the category ${\mathbb L}^{\rm eq+}_R = {\mathbb L}^{\rm eq+}({\cal D})$ of pp-sorts (also called pp-imaginaries) see \cite[10.2.30]{PreNBK} and, for further detail about this background material, \cite{PreNBK}, also \cite{PreMAMS} and \cite{PreAbex}.

To any interpretation functor $I:{\cal C} \rightarrow {\cal D}$ we associate the functor $I_0:{\rm fun}({\cal D}) \rightarrow {\rm fun}({\cal C})$ which is given on objects by sending $B\in {\rm fun}({\cal D})$ to the composition $B I$ and by sending a natural transformation $\tau:B_1\rightarrow B_2$ to the natural transformation whose component at $C\in {\cal C}$ is given by $\tau_{IC}$.  We use the basic formula  $$I_0G.C = G.IC \hspace{12pt} (\ast)$$ for $C\in {\cal C}$ and $G\in {\rm fun}({\cal D})$.  See \cite[Chpt.~13]{PreMAMS} (or \cite[Chpt.~18]{PreNBK}) for more detail, including the facts that this is exact and that all exact functors from ${\rm fun}\mbox{-}S$ to ${\rm fun}\mbox{-}R$ arise in this way.  Indeed (\cite{PreRajStrShv}, \cite{PreAbex}) this gives an equivalence between the 2-categories of definable categories and small abelian categories.

\begin{lemma}\label{allemb}
Let $I:{\rm Mod}\mbox{-}R\rightarrow {\rm Mod}\mbox{-}S$ be an interpretation functor. The definable subcategory generated by the image of $I$ is ${\rm Mod}\mbox{-}S$ if and only if $I_0$ is faithful.
\end{lemma}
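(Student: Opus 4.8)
The plan is to route everything through the (order-reversing) bijection, recalled just above the statement, between definable subcategories $\mathcal{D}\subseteq{\rm Mod}\mbox{-}S$ and Serre subcategories of ${\rm fun}\mbox{-}S$, and then invoke the standard fact that an exact functor between abelian categories is faithful iff it reflects the zero object. Recall that $\mathcal{D}$ is determined by $\mathcal{S}_\mathcal{D}=\{G\in{\rm fun}\mbox{-}S: GN=0 \text{ for all } N\in\mathcal{D}\}$ and, in particular, $\mathcal{D}={\rm Mod}\mbox{-}S$ if and only if $\mathcal{S}_\mathcal{D}=0$ (this is part of the correspondence, via \cite[3.4.7]{PreNBK}).

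Write $\mathcal{D}_I$ for the definable subcategory of ${\rm Mod}\mbox{-}S$ generated by the image of $I$. The first step is the observation that, for any $G\in{\rm fun}\mbox{-}S$, the class $\{N\in{\rm Mod}\mbox{-}S: GN=0\}$ is itself a definable subcategory, since it is cut out by the closure of the pp-pair $\phi/\psi$ corresponding to $G$. Hence this class contains $\mathcal{D}_I$ as soon as it contains every $IM$, $M\in{\rm Mod}\mbox{-}R$; that is, $G\in\mathcal{S}_{\mathcal{D}_I}$ if and only if $G(IM)=0$ for all $M\in{\rm Mod}\mbox{-}R$. By the basic formula $(\ast)$, $I_0G\cdot M=G\cdot IM$, the latter condition says precisely that $I_0G=0$ as an object of ${\rm fun}\mbox{-}R$. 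So $\mathcal{S}_{\mathcal{D}_I}$ is exactly the class of objects killed by $I_0$, and therefore $\mathcal{D}_I={\rm Mod}\mbox{-}S$ if and only if $I_0$ reflects the zero object.

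It remains to use exactness of $I_0$ (recalled in the paragraph preceding the lemma): an exact additive functor $F$ is faithful iff $FX=0\Rightarrow X=0$. Indeed, if $F$ reflects zero objects and $Ff=0$ for $f:G_1\to G_2$, then exactness gives $F({\rm im}\,f)={\rm im}(Ff)=0$, so ${\rm im}\,f=0$ and $f=0$; conversely faithfulness forces $X=0$ whenever $FX=0$ since then $F({\rm id}_X)=0$. Combining this with the previous paragraph gives that $I_0$ is faithful iff $\mathcal{D}_I={\rm Mod}\mbox{-}S$, as required. There is no real obstacle; the only points needing a little care are the remark that $\{N:GN=0\}$ is definable (so that vanishing of $G$ on a generating set of $\mathcal{D}_I$ forces vanishing on all of $\mathcal{D}_I$) and keeping straight that ``$I_0G=0$ in ${\rm fun}\mbox{-}R$'' is synonymous with ``$G$ vanishes on the image of $I$'' — both immediate from the set-up of this section.
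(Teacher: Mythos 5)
Your proposal is correct and follows essentially the same route as the paper: the correspondence between definable subcategories of ${\rm Mod}\mbox{-}S$ and Serre subcategories of ${\rm fun}\mbox{-}S$, exactness of $I_0$ to reduce faithfulness to reflecting zero objects, and the formula $(\ast)$ to identify the kernel of $I_0$ with the functors vanishing on the image of $I$. You merely spell out a couple of steps (that $\{N:GN=0\}$ is definable, and the faithful-iff-reflects-zero argument) which the paper's terser proof leaves implicit.
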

\begin{proof}
This follows from the correspondence between definable subcategories of ${\rm Mod}\mbox{-}S$ and Serre subcategories of (that is, kernels of exact functors from) ${\rm fun}\mbox{-}S$, see \cite[12.4.1]{PreNBK}.  Since $I_0$ is exact, it is faithful if and only if for all non-zero $G \in {\rm fun}\mbox{-}S$, $I_0G\neq 0$. By ($\ast$) above, this is true if and only if for all $G\in{\rm fun}\mbox{-}S$, there exists an $M\in {\rm Mod}\mbox{-}R$ such that $G IM\neq 0$. So $I_0$ is faithful if and only if for every proper definable subcategory $\mathcal{D}$ of ${\rm Mod}\mbox{-}S$, there exists an $M\in{\rm Mod}\mbox{-}R$ with $IM\notin\mathcal{D}$.
\end{proof}

In the remainder of this section we investigate lattice homomorphisms between pp-lattices induced by interpretation functors.

Let ${\cal A}$ be an abelian category and $A$ an object of ${\cal A}$. Recall that a subobject of $A$ is an equivalence class of monomorphisms $i:X\to A$, where $i$ is equivalent to $j:Y\to A$ if there is an isomorphism $g:X\rightarrow Y$ with $i=jg$.
The collection of subobjects $\text{Sub}(A)$ of $A$ is ordered by defining $i\leq j$ if there is $k$ with $i=jk$.
It has meets given by pullbacks and joins given by pushouts.

We note the following.

\begin{lemma}\label{sublat} Let $F:{\cal A}\rightarrow {\cal B}$ be an exact functor. For each $A\in{\cal A}$, the map $\overline{F_{A}}:\text{Sub}(A)\rightarrow\text{Sub}(FA)$, induced by sending a monomorphism $i$ to $Fi$, is well-defined, order preserving and preserves both meet and join. Moreover, $F$ is faithful if and only if $\overline{F_{A}}$ is an embedding for all $A\in {\cal A}$.
\end{lemma}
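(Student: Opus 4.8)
The plan is to lean on the standard facts that an additive exact functor $F$ between abelian categories preserves kernels and cokernels, hence also monomorphisms, epimorphisms, finite direct sums, pullbacks, pushouts and images, and that, conversely, a \emph{faithful} exact functor reflects the property of being a zero object and therefore reflects monomorphisms, epimorphisms and isomorphisms. Granting these, the first assertions are bookkeeping: $Fi$ is again monic, and $F$ carries an equivalence $i = jg$ (with $g$ invertible) to an equivalence $Fi = Fj\cdot Fg$, so $\overline{F_A}$ is well-defined; it is order-preserving since $i = jk$ gives $Fi = Fj\cdot Fk$. For meets, $i\wedge j$ is represented by the pullback of $i$ and $j$, and left-exactness of $F$ sends that pullback square to a pullback square, so $\overline{F_A}(i\wedge j)$ is the pullback of $Fi$ and $Fj$, i.e.\ $\overline{F_A}(i)\wedge\overline{F_A}(j)$. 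For joins I would use $i\vee j = \mathrm{im}\big([i\ j]\colon X\oplus Y\to A\big)$; since $F$ preserves direct sums and images, $\overline{F_A}(i\vee j) = \mathrm{im}\big([Fi\ Fj]\big) = \overline{F_A}(i)\vee\overline{F_A}(j)$ (equivalently one runs this through the pushout description of the join from the statement: the join is the pushout of the meet-span, mapping monically to the ambient object, and $F$ preserves that pushout).

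For the final equivalence, first suppose $F$ is faithful and fix $A$. As $\overline{F_A}$ is a lattice homomorphism, to see it is an embedding it suffices to show it reflects the order. So assume $Fi\le Fj$ in $\mathrm{Sub}(FA)$. The key reformulation is that, for subobjects $i\colon X\to A$ and $j\colon Y\to A$, one has $i\le j$ precisely when the pullback projection $p\colon X\times_A Y\to X$ is an isomorphism; applying the left-exact $F$ identifies $Fp$ with the corresponding projection for $Fi, Fj$, so the hypothesis $Fi\le Fj$ says exactly that $Fp$ is an isomorphism. Now a faithful exact functor reflects isomorphisms: if $FC = 0$ then $F(1_C) = 1_{FC} = 0$ coincides with the image under $F$ of the zero endomorphism of $C$, so $1_C = 0$ by faithfulness, i.e.\ $C = 0$; applying this to $\ker p$ and $\mathrm{coker}\,p$, sent by $F$ to $\ker(Fp) = 0$ and $\mathrm{coker}(Fp) = 0$, shows $p$ is monic and epic, hence an isomorphism. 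Thus $i\le j$; and a lattice homomorphism which reflects the order is injective, so $\overline{F_A}$ is an embedding.

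Conversely, suppose $\overline{F_A}$ is an embedding for every $A$. By additivity it is enough to show that $Ff = 0$ implies $f = 0$. The subobject $\ker f\hookrightarrow A$ is sent by the left-exact $F$ to $\ker(Ff)\hookrightarrow FA$, which is the identity of $FA$ since $Ff = 0$; thus $\overline{F_A}(\ker f) = \overline{F_A}(1_A)$ in $\mathrm{Sub}(FA)$. Injectivity of $\overline{F_A}$ forces $\ker f = 1_A$ in $\mathrm{Sub}(A)$, i.e.\ $\ker f\to A$ is an isomorphism, whence $f = 0$. So $F$ is faithful.

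The only step that is not a formal unwinding is the ``faithful $\Rightarrow$ embedding'' direction, where the crux is to recognise that recovering $i\le j$ from $Fi\le Fj$ is exactly the assertion that $F$ reflects the isomorphism $X\times_A Y\to X$; once that is seen, it reduces to the routine fact that a faithful exact functor detects the zero object. Everything else follows directly from $F$ preserving the (co)limits out of which $\mathrm{Sub}(A)$, its ordering and its lattice operations are built.
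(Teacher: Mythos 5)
Your proof is correct and follows essentially the same route as the paper: exactness of $F$ preserves the pullbacks/pushouts (equivalently, images) defining the lattice structure, and faithfulness is used to show that the obstruction to $i\le j$ (your pullback projection being an isomorphism, the paper's cokernel of the comparison map) is annihilated by $F$ and hence vanishes. The minor differences --- your image-of-$[i\ j]$ description of the join and your direct kernel argument for the converse, versus the paper's pushout description and its reduction to ``$FA=0$ implies $A=0$'' --- are cosmetic.
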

\begin{proof}
Since $F$ is exact it maps subobjects of $A$ to subobjects of $FA$ and preserves pullbacks and pushouts (the operations which define meet and join respectively).

If $F$ is faithful and $i:X\rightarrow A, \ j:Y\rightarrow A$ are comparable subobjects - say there is a monomorphism $g:X\rightarrow Y$ with $i=jg$ - but $Fi$ is equal to $Fj$ as a subobject of $FA$, then $F$ must annihilate the cokernel of $g$. So, since $F$ is faithful, that cokernel is $0$. Hence $i$ and $j$ are equal as subobjects of $A$.  The converse is clear since, $F$ being exact, in order to check that $F$ is faithful it is enough to show that if $FA=0$ then $A=0$.
\end{proof}

Now suppose that $\psi\leq \phi$ is a pp-pair for $R$-modules.  Then (see the comments at the beginning of Section \ref{seclat}) the subobject lattice of the corresponding object $\phi/\psi$ in ${\mathbb L}^{\rm eq+}_R$($\simeq {\rm fun}\mbox{-}R$) is naturally isomorphic to the interval, $[\phi,\psi]$, between $\psi$ and $\phi$ in the pp-lattice ${\rm pp}_R^n$.

\begin{cor}\label{intfunctorslatthom} Let $R$ and $S$ be rings.
Suppose that $I:{\rm Mod}\mbox{-}R\rightarrow {\rm Mod}\mbox{-}S$ is an interpretation functor with $IM= \phi(M)/\psi(M)$ (as abelian groups) for $M\in {\rm Mod}\mbox{-}R$.  Then there is an induced lattice homomorphism from ${\rm pp}_S^1$ to the interval $[\phi,\psi]$ in ${\rm pp}_R^n$.

In the case that the smallest definable subcategory of ${\rm Mod}\mbox{-}S$ containing the image of $I$ is the whole of ${\rm Mod}\mbox{-}S$, equivalently that $I_0$ is faithful, this is an embedding.
\end{cor}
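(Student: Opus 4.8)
The plan is to obtain the homomorphism from Lemma~\ref{sublat}, applied to the exact functor $I_0:{\rm fun}\mbox{-}S\to{\rm fun}\mbox{-}R$ associated to $I$, using the identification of pp-lattices with subobject lattices in $\Leq$ recalled immediately before the statement.

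First I would fix the relevant objects and their subobject lattices. Let $A=(S,-)\in{\rm fun}\mbox{-}S$, equivalently the pp-sort $(x=x)/(x=0)$ in one free variable in $\Leq_S\simeq{\rm fun}\mbox{-}S$; by the remarks at the start of Section~\ref{seclat}, $\text{Sub}(A)$ is naturally isomorphic to ${\rm pp}^1_S$. Next I would observe that $I_0A$ is, as an object of $\Leq_R$, isomorphic to the object $\phi/\psi$ determined by the pp-$m$-pair in the data defining $I$: indeed $I_0A = A\circ I$, and by the basic formula $I_0G.M=G.IM$ this sends $M\in{\rm Mod}\mbox{-}R$ to $(S,IM)\cong IM=\phi(M)/\psi(M)$, which is exactly how the sort $\phi/\psi$ evaluates on $R$-modules; since a coherent functor on ${\rm Mod}\mbox{-}R$ is determined by its restriction to ${\rm mod}\mbox{-}R$, the two objects are isomorphic (cf.\ the construction of $I_0$ on sorts in \cite[Chpt.~18]{PreNBK}). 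Consequently, by the sentence immediately preceding this corollary, $\text{Sub}(I_0A)$ is naturally isomorphic to the interval $[\phi,\psi]$ in ${\rm pp}^n_R$.

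Now Lemma~\ref{sublat}, with the exact functor $F=I_0$ and the object $A=(S,-)$, gives that the induced map $\text{Sub}(A)\to\text{Sub}(I_0A)$ is order preserving and preserves meet and join; transporting it along the two natural isomorphisms just described produces a lattice homomorphism ${\rm pp}^1_S\to[\phi,\psi]\subseteq{\rm pp}^n_R$, as asserted. For the last clause, suppose the smallest definable subcategory of ${\rm Mod}\mbox{-}S$ containing the image of $I$ is the whole of ${\rm Mod}\mbox{-}S$; then $I_0$ is faithful by Lemma~\ref{allemb}, so by the final statement of Lemma~\ref{sublat} the map $\text{Sub}(A)\to\text{Sub}(I_0A)$ — and hence the induced homomorphism ${\rm pp}^1_S\to[\phi,\psi]$ — is an embedding of lattices.

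The one point requiring genuine care is the identification of $I_0(S,-)$ with the specific object $\phi/\psi$ of $\Leq_R$: the basic formula only pins $I_0(S,-)$ down as a coherent functor on ${\rm Mod}\mbox{-}R$, whereas Lemma~\ref{sublat} is being applied to it as an object of $\Leq_R$ whose subobject lattice we need to control, so one has to unwind how the data $(\phi/\psi;\rho_{s_1},\dots,\rho_{s_n})$ specifying $I$ yields $I_0$ at the level of pp-sorts and check compatibility with the natural isomorphism between subobjects of $\phi/\psi$ in $\Leq_R$ and the pp-interval $[\phi,\psi]$. Once that is in place, the rest is a routine assembly of Lemmas~\ref{sublat} and \ref{allemb}.
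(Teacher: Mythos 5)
Your proposal is correct and follows essentially the same route as the paper: the paper's proof likewise applies Lemma~\ref{sublat} to the exact functor $I_0$, noting that $I_0$ sends the sort $x=x/x=0$ to $\phi/\psi$ and identifying the subobject lattices with ${\rm pp}^1_S$ and the interval $[\phi,\psi]$, with faithfulness in the second case supplied by Lemma~\ref{allemb}. Your extra remarks spelling out the identification $I_0(S,-)\cong \phi/\psi$ via the formula $I_0G.M=G.IM$ just make explicit what the paper states in one line.
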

\begin{proof}
The exact functor $I_0:{\mathbb L}^{\rm eq+}_S \rightarrow{\mathbb L}^{\rm eq+}_R$ induced by $I$ sends the pp-pair $x=x/x=0$ to $\phi/\psi$. By \ref{sublat}, $I_0$ induces a lattice homomorphism from the subobject lattice of $x=x/x=0$ to the subobject lattice of $\phi/\psi$.  The second statement follows by \ref{sublat} and \ref{allemb}.
\end{proof}

\begin{proposition}\label{Nag}\cite[Proposition 2.3]{Nag}
Let $R,S$ be finite-dimensional $k$-algebras.
Let $F:=(-\otimes_S B_R):{\rm mod}\mbox{-}S\rightarrow {\rm mod}\mbox{-}R$ be a representation embedding and let $G:=(B_R,-) :{\rm mod}\mbox{-}R\rightarrow {\rm mod}\mbox{-}S$ be its right adjoint. For every $N\in {\rm mod}\mbox{-}S$, $N$ is a direct summand of $GFN$.
\end{proposition}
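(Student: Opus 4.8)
The plan is to prove the sharper statement that the unit $\eta_N\colon N\to GFN$ of the adjunction $F\dashv G$ is a split monomorphism in ${\rm mod}\mbox{-}S$; since a split monomorphism exhibits its source as a direct summand of its target, this gives the proposition. First I would reduce to the case where $N$ is indecomposable: $GF$ is an additive functor, $S$ is finite-dimensional so $N$ is a finite direct sum of indecomposables, and a finite direct sum of split monomorphisms is again split. So assume $N$ is indecomposable; then $FN$ is indecomposable because $F$ is a representation embedding, and ${\rm End}_R(FN)$ is local since $FN$ is a module of finite length over the finite-dimensional algebra $R$.

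The key mechanism is the triangle identity $\varepsilon_{FN}\circ F(\eta_N)={\rm id}_{FN}$ (where $\varepsilon$ is the counit), which already says that $F(\eta_N)\colon FN\to F(GFN)$ is a split monomorphism. Next I would decompose the finite-dimensional $S$-module $GFN$ as $GFN=\bigoplus_{i=1}^m X_i$ with each $X_i$ indecomposable, with inclusions $\iota_i$ and projections $\pi_i$. Applying the additive functor $F$ yields $F(GFN)=\bigoplus_{i=1}^m FX_i$ with inclusions $F\iota_i$ and projections $F\pi_i$, and each $FX_i$ is indecomposable and nonzero (again because $F$ is a representation embedding, hence faithful and indecomposable-preserving), so this is a Krull-Schmidt decomposition in ${\rm mod}\mbox{-}R$. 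Expanding the triangle identity in components relative to this decomposition gives $\sum_{i=1}^m\big(\varepsilon_{FN}\circ F\iota_i\big)\circ F(\pi_i\eta_N)={\rm id}_{FN}$ in ${\rm End}_R(FN)$; since that ring is local, some term, say the $i_0$-th, must be an automorphism of $FN$. In particular $F(\pi_{i_0}\eta_N)\colon FN\to FX_{i_0}$ is a split monomorphism into an indecomposable module of finite length, hence an isomorphism.

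Finally, since $F$ reflects isomorphisms, $\pi_{i_0}\eta_N\colon N\to X_{i_0}$ is an isomorphism in ${\rm mod}\mbox{-}S$, and then $r:=(\pi_{i_0}\eta_N)^{-1}\circ\pi_{i_0}\colon GFN\to N$ satisfies $r\circ\eta_N={\rm id}_N$, so $\eta_N$ splits and $N$ is a direct summand of $GFN$. I do not anticipate a genuine obstacle here; the step needing the most care is checking that $F$ carries a direct-sum decomposition of $GFN$ to an honest Krull-Schmidt decomposition in ${\rm mod}\mbox{-}R$ — this is exactly where the two properties of a representation embedding that the argument uses, preservation of indecomposability and reflection of isomorphism, are combined with the Krull-Schmidt property of finite-dimensional algebras — after which the locality of ${\rm End}_R(FN)$ isolates the summand $X_{i_0}\cong N$. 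One could instead route the argument through uniqueness of Krull-Schmidt decompositions and obtain only an abstract isomorphism $N\cong X_{i_0}$, but tracking the explicit map $\pi_{i_0}\eta_N$ costs nothing extra and yields the stronger fact that the unit itself splits.
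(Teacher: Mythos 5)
Your proof is correct: the reduction to indecomposable $N$, the splitting of $F(\eta_N)$ via the triangle identity, the use of locality of ${\rm End}_R(FN)$ to isolate a summand $X_{i_0}$ of $GFN$ with $F(\pi_{i_0}\eta_N)$ an isomorphism, and the final descent of that isomorphism to $\pi_{i_0}\eta_N$ (legitimate because a representation embedding is exact and faithful, hence reflects isomorphisms of morphisms, as the paper itself uses in the proof of Theorem 2.2) all go through, so the unit splits and $N\mid GFN$. The paper gives no proof of this proposition, quoting it from Nagase \cite{Nag}, and your argument is essentially the standard one for this result.
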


If $S$ is a finite-dimensional algebra then, for every proper definable subcategory $\mathcal{D}$, there is a finite-dimensional $S$-module $N$ such that $N\notin\mathcal{D}$ (because the set of finite-dimensional indecomposables is dense in the Ziegler spectrum of $S$, see \cite[5.3.36]{PreNBK}). So \ref{allemb} implies that, in the situation of the above proposition, the definable subcategory generated by the image of $(B_R,-):{\rm Mod}\mbox{-}R\rightarrow {\rm Mod}\mbox{-}S$ is the whole of ${\rm Mod}\mbox{-}S$ (in the finitely controlled case, see Section \ref{seccontr}, this will also follow from \ref{controlled restriction of scalars}).

\begin{cor}\label{lattagain}
Let $R,S$ be finite-dimensional $k$-algebras.  Suppose that $F:=(-\otimes \,_SB_R):{\rm mod}\mbox{-}S\rightarrow {\rm mod}\mbox{-}R$ is a representation embedding and set $I$ to be the interpretation functor $(_SB_R,-) :{\rm Mod}\mbox{-}R\rightarrow {\rm Mod}\mbox{-}S$.  Then the corresponding functor $I_0:{\mathbb L}^{\rm eq+}_S \rightarrow {\mathbb L}^{\rm eq+}_R$ induces a lattice embedding from ${\rm pp}_S^1$ to an interval $[\psi(x), x=0]$ in the lattice ${\rm pp}_R^n$ where $\psi$ is some quantifier-free pp formula.
\end{cor}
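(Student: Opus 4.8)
The plan is to present $I=({}_SB_R,-):{\rm Mod}\mbox{-}R\rightarrow{\rm Mod}\mbox{-}S$ as an interpretation functor of a very simple shape and then apply Corollary \ref{intfunctorslatthom}, with the faithfulness hypothesis there supplied by the observation made after Proposition \ref{Nag}. First I would note that $B_R=F(S)\in{\rm mod}\mbox{-}R$, so $B_R$ is finitely presented, and hence $I={\rm Hom}_R(B,-)$ (given the $S$-module structure induced by the left $S$-action on $B$) commutes with direct products and with direct limits, so is an interpretation functor. Then I would fix a projective presentation $R^{\,l}\xrightarrow{\,g\,}R^{\,n}\to B_R\to 0$ and apply ${\rm Hom}_R(-,M)$ to obtain an exact sequence $0\to{\rm Hom}_R(B,M)\to M^n\xrightarrow{\,g^\ast\,}M^l$. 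Thus ${\rm Hom}_R(B,M)$ is the solution set in $M^n$ of the finite system $g^\ast(\overline x)=0$ of $R$-linear equations; that is, ${\rm Hom}_R(B,M)=\psi(M)$ for the quantifier-free pp formula $\psi\in{\rm pp}^n_R$ consisting of those equations. Consequently $IM=\psi(M)/(x=0)(M)$ as abelian groups, which is exactly the hypothesis of Corollary \ref{intfunctorslatthom}, with $x=0$ in the role of the smaller pp formula of the pair there and $\psi$ in the role of the larger.

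Applying that corollary then yields a lattice homomorphism from ${\rm pp}^1_S$ to the interval $[\psi,\,x=0]$ of ${\rm pp}^n_R$ (realised by $I_0$ on the subobject lattice of the sort $x=x/x=0$, under the standard identifications). To promote this homomorphism to an embedding, the second clause of the corollary asks that $I_0$ be faithful, which by Lemma \ref{allemb} is equivalent to the definable subcategory of ${\rm Mod}\mbox{-}S$ generated by the image of $I$ being all of ${\rm Mod}\mbox{-}S$; but this is precisely the fact recorded in the paragraph after Proposition \ref{Nag}, namely that every $N\in{\rm mod}\mbox{-}S$ is a direct summand of $GFN=I(FN)$ and the finite-dimensional indecomposables are dense in ${\rm Zg}_S$, so no proper definable subcategory can contain all the modules $I(FN)$. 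Hence the map is a lattice embedding ${\rm pp}^1_S\hookrightarrow[\psi,\,x=0]\subseteq{\rm pp}^n_R$, which is the assertion.

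I do not expect a genuine obstacle: the substantive content is already present in Corollary \ref{intfunctorslatthom} (hence in Lemmas \ref{sublat} and \ref{allemb}) and in Proposition \ref{Nag}. The one point that demands attention is the first step — checking that ${\rm Hom}_R(B,-)$ is presented by a pp-pair of the special form $\psi/(x=0)$ with $\psi$ quantifier-free, so that the target of the induced map is genuinely an interval whose bottom element is $x=0$ — and this falls straight out of any projective presentation of $B_R$.
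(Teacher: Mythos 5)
Your proof is correct and follows essentially the same route as the paper: apply Corollary \ref{intfunctorslatthom} to $I=({}_SB_R,-)$, identifying its object part with a pp-pair $\psi/(x=0)$ with $\psi$ quantifier-free, and obtain faithfulness of $I_0$ from Nagase's result together with density of the finite-dimensional points in ${\rm Zg}_S$ via Lemma \ref{allemb}. The only cosmetic difference is that you derive the quantifier-free presentation directly from a projective presentation of $B_R$, where the paper instead cites the standard fact that the image of $(\overline{t},-):(B_R,-)\to(R^n,-)$ is $F_\psi$ with $\psi$ quantifier-free.
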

\begin{proof}
As noted above, $I_0$ is faithful.  It sends $(S,-)$, that is, $x=x/x=0 \in{\mathbb L}^{\rm eq+}_S$, to $(S,I(-))\in{\mathbb L}^{\rm eq+}_R$ and $(S,I(-))\cong(FS,-)=(B_R,-)$.  Representable functors in ${\mathbb L}^{\rm eq+}_R$ correspond to pp-pairs of the form $\psi(x)/x=0$ where $\psi(x)$ is quantifier-free:  in particular if $\overline{t}:R^n\rightarrow B_R$ is epi then $(\overline{t},-): (B_R,-) \rightarrow (R^n,-)$ is an embedding with image of the form $F_\psi$ with $\psi$ quantifier-free (\cite[10.2.34]{PreNBK}). So, by \ref{intfunctorslatthom}, $I_0$ induces a lattice embedding of $\text{pp}_1^S = [x=x, x=0]$ into the interval $[\psi(x), x=0]$ in $\text{pp}_n^R$.
\end{proof}

This gave us another route to \ref{latt} (at least for finite-dimensional algebras).  In fact, as we now show, the explicit construction of a lattice homomorphism $\beta$ from $\text{pp}_S^1$ to $\text{pp}_R^n$ in Section \ref{seclat} is essentially the same as the lattice homomorphism defined above by the interpretation functor $(_SB_R,-):{\rm Mod}\mbox{-}S\rightarrow {\rm Mod}\mbox{-}R$.

First note that both constructions are dependent upon picking an $n$-tuple $\overline{t}$ generating $B_R$. This is clear in Section \ref{seclat}. In this section, such a choice is implicit in the proof of \ref{lattagain}, where we identify ${\rm Hom}_R(_SB_R,-)$ composed with the forgetful functor from ${\rm Mod}\mbox{-}S$ to ${\rm Ab}$ with a pp-pair.  Indeed, if $\overline{t}$ generates $B_R$ then this induces an embedding $(\overline{t},-):{\rm Hom}_R(B,-)\rightarrow {\rm Hom}_R(R^n,-)$, the image of which, as a subfunctor of ${\rm Hom}_R(R^n,-)$, is given by a (quantifier-free) pp formula $\psi$ which generates the pp-type of $\overline{t}$ in $B_R$.

Now let $(C,c)$ be a free realisation of $\phi\in\text{pp}_S^1$. So $\text{im}(c,-)=F_\phi\subseteq (S,-)$. Set $I= {\rm Hom}_R(_SB_R,-)$; since $I_0:{\rm fun}\mbox{-}S \rightarrow {\rm fun}\mbox{-}R$ is exact, $I_0$ sends $\text{im}(c,-)$ to the image of $I_0(c,-)$. Using ($\ast$) and the Hom-tensor adjunction, we have that $I_0$ applied to ${\rm Hom}_S(C,-)$ is ${\rm Hom}_R(C\otimes_SB_R,-)$ and, moreover that $I_0(c,-) = (c\otimes 1_{B_R},-):(C\otimes_SB_R,-)\rightarrow (S\otimes_SB_R,-)$. So the construction in \ref{lattagain}, sends $F_\phi$ to $\text{im}(c\otimes\overline{t},-)\subseteq (R^n,-)$. That is, in terms of formulas, $\phi$ is sent to a generator, i.e.~$\beta\phi$, of the pp-type of $c\otimes\overline{t}$ in $C\otimes_SB_R$.

\begin{remark}
Corollary \ref{intfunctorslatthom} allows us to extend \ref{KGfd}:  if $I:{\rm Mod}\mbox{-}R\rightarrow {\rm Mod}\mbox{-}S$ is an interpretation functor such that the smallest definable subcategory of ${\rm Mod}\mbox{-}S$ containing the image of $I$ is the whole of ${\rm Mod}\mbox{-}S$ then $\text{KG}(R)\leq \text{KG}(S)$.
\end{remark}

%

\section{Controlled wild algebras}\label{seccontr}\marginpar{seccontr}

Let $S$ be a $k$-algebra, not necessarily finite-dimensional. Denote by ${\rm fin}\mbox{-}S$ the category of finite-dimensional right $S$-modules. Let $R$ be a finite-dimensional $k$-algebra. A faithful exact functor $F: {\rm fin}\mbox{-}S\rightarrow {\rm mod}\mbox{-}R$ is a \textbf{controlled representation embedding} if there exists a full subcategory $\mathcal{C}$ of ${\rm mod}\mbox{-}R$, closed under direct sums and direct summands, such that, for all $M,N\in{\rm fin}\mbox{-}S$,
\[{\rm Hom}_R(FM,FN)=F{\rm Hom}_{S}(M,N)\oplus{\rm Hom}_R(FM,FN)_{\mathcal{C}}\] and
\[{\rm Hom}_R(FM,FN)_{\mathcal{C}}\subseteq{\rm rad}({\rm Hom}_R(FM,FN))\] where ${\rm Hom}_R(FM,FN)_{\mathcal{C}}$ denotes the set of morphisms from $FM$ to $FN$ which factor through some $C\in \mathcal{C}$. We say that $F$ is \textbf{controlled by} $\mathcal{C}$, or ${\cal C}$-controlled. We say that $F$ is \textbf{finitely controlled} if it is controlled by $\text{add}(C)$ for some $C\in {\rm mod}\mbox{-}R$. That a controlled representation embedding $F:{\rm mod}\mbox{-}S\rightarrow {\rm mod}\mbox{-}R$, where $S,R$ are finite-dimensional algebras, is a representation embedding can be extracted directly from the proof of \cite[Proposition 2.2]{Han}.

We say that a finite-dimensional $k$-algebra $R$ is ({\bf finitely}) {\bf controlled wild} if there is a (finitely) controlled representation embedding $F:{\rm fin}\mbox{-}k\langle X,Y\rangle\rightarrow {\rm mod}\mbox{-}R$.

\begin{remark}\label{factoringcontrolledrepembeddings}
We recall a simplification from \cite{PreRepn}.  If $_SB_R$ is a bimodule such that $_SB$ is a finitely generated projective generator of ${\rm Mod}\mbox{-}S$ then the functor $F=(-\otimes _SB_R):{\rm Mod}\mbox{-}S\rightarrow {\rm Mod}\mbox{-}R$ can be factored as a Morita equivalence followed by restriction of scalars.  Namely, let $T={\rm End}({_S}B)$; so the right action of $R$ on $B$ induces a homomorphism $\lambda:R\rightarrow T$ and $F=GH$ where $H=(-\otimes_SB_T): {\rm Mod}\mbox{-}S\rightarrow {\rm Mod}\mbox{-}T$ is the Morita equivalence and $G:{\rm Mod}\mbox{-}T\rightarrow {\rm Mod}\mbox{-}R$ is the restriction of scalars map induced by $\lambda$.  It is immediate that $G$ is a representation embedding iff $F$ is.  Also, if $F$ is controlled by ${\cal C}$ then so is $G$, since $H$ induces an equivalence between ${\rm fin}\mbox{-}S$ and ${\rm fin}\mbox{-}T$.

Therefore we may assume that $B$ is $_SS_R$ and the functor $F$ is just restriction of scalars from $S$ to a subring $R$, in which case the first formula above becomes:
\[{\rm Hom}_R(M,N)={\rm Hom}_{S}(M,N)\oplus{\rm Hom}_R(M,N)_{\mathcal{C}}\]
for all $M,N\in {\rm fin}\mbox{-}S$.  Indeed, provided $M$ is finite-dimensional, this is true for any $N\in {\rm Mod}\mbox{-}S$ (use that any morphism from $M$ factors through a finite-dimensional submodule of $N$). In particular it holds for $M=S$.
\end{remark}

\begin{lemma}\label{factormap}
Let $R$ be a finite-dimensional $k$-algebra. Suppose $M,C\in{\rm mod}\mbox{-}R$. There exists $n\in{\mathbb N}$ and $\Delta\in {\rm Hom}_R(M,C^n)$ such that any morphism from $M$ to any $N\in {\rm add}(C)$ factors initially through $\Delta$.  That is, $\Delta$ is an ${\rm add}(C)$-preenvelope of $M$.
\end{lemma}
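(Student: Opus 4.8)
The plan is to construct $\Delta$ directly as the canonical map into a suitable power of $C$ indexed by a generating set of the $\mathrm{End}_R(C)$-module $\mathrm{Hom}_R(M,C)$. Since $R$ is a finite-dimensional $k$-algebra and $M,C$ are finite-dimensional, $\mathrm{Hom}_R(M,C)$ is a finite-dimensional $k$-vector space, so in particular it is finitely generated as a right module over $E:=\mathrm{End}_R(C)$. Choose $f_1,\dots,f_n\in\mathrm{Hom}_R(M,C)$ generating it over $E$, and let $\Delta=(f_1,\dots,f_n):M\to C^n$.

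The key step is to check the factorization property. First I would reduce to $N=C^m$: any $N\in\mathrm{add}(C)$ is a direct summand of some $C^m$, say with inclusion $\iota:N\to C^m$ and retraction $\pi:C^m\to N$, and a morphism $g:M\to N$ factors through $\Delta$ as soon as $\iota g:M\to C^m$ does, since then $\iota g=h\Delta$ gives $g=\pi\iota g=(\pi h)\Delta$. So it suffices to factor an arbitrary $g=(g_1,\dots,g_m):M\to C^m$, and for this it suffices to factor each component $g_j:M\to C$. But $g_j\in\mathrm{Hom}_R(M,C)$, so by the choice of the $f_i$ we may write $g_j=\sum_{i=1}^n f_i e_{ij}$ with $e_{ij}\in E=\mathrm{End}_R(C)$; then $g_j=(e_{1j},\dots,e_{nj})\circ\Delta$, where $(e_{1j},\dots,e_{nj}):C^n\to C$ is the obvious map. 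Assembling these over $j$ gives the matrix $(e_{ij}):C^n\to C^m$ with $g=(e_{ij})\circ\Delta$, and composing with $\pi$ handles general $N$. Hence $\Delta$ is an $\mathrm{add}(C)$-preenvelope of $M$.

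I do not expect a serious obstacle here: the only point requiring the finite-dimensionality (or, more generally, some finiteness) hypothesis is the existence of a finite generating tuple for $\mathrm{Hom}_R(M,C)$ over $\mathrm{End}_R(C)$, which is immediate since that Hom-space is already finite-dimensional over $k$. Note that we do not claim $\Delta$ is a monomorphism or that the factorization is unique — only the existence of \emph{some} factoring morphism, which is all the statement asserts (and all that is needed for the applications to controlled representation embeddings, where $\mathrm{add}(C)$ plays the role of the controlling subcategory $\mathcal C$).
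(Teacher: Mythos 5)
Your proof is correct and follows essentially the same route as the paper: both choose finitely many morphisms $f_1,\dots,f_n$ capturing all of $\mathrm{Hom}_R(M,C)$, set $\Delta=(f_1,\dots,f_n):M\to C^n$, reduce general $N\in\mathrm{add}(C)$ to powers of $C$ via a retraction, and factor any $g:M\to C^m$ through $\Delta$ by an explicit matrix. The only (harmless) difference is that you use generators of $\mathrm{Hom}_R(M,C)$ over $\mathrm{End}_R(C)$ and a matrix of endomorphisms, whereas the paper uses a $k$-spanning set and a matrix of scalars.
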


\begin{proof}
Since ${\rm Hom}_R(M,C)$ is finite-dimensional there are $f_1,\dots ,f_n$ which span ${\rm Hom}_R(M,C)$ as a $k$-vector space. Let $\Delta:M\rightarrow C^n$ be given coordinatewise by (the transpose of) $(f_1,\dots ,f_n)$. It will be enough to consider morphisms from $M$ to powers of $C$. Suppose $g:M\rightarrow C^m$ is given coordinatewise as $g=(g_1,\dots, g_m)^T$.  For each $i=1,\dots,m$, let $\lambda_{ij}\in k$ be such that $g_i=\sum_{j=1}^n \lambda_{ij}f_j$. Define the map $h:C^m\rightarrow C^n$ to be that given by the matrix $(\lambda_{ij})_{ij}$.  Then $h\Delta = g$, as required.
\end{proof}

\begin{theorem}\label{controlled restriction of scalars}
Suppose $S,R$ are finite-dimensional $k$-algebras and $F:{\rm Mod}\mbox{-}S\rightarrow {\rm Mod}\mbox{-}R$ is such that its restriction to ${\rm mod}\mbox{-}S$ is a finitely controlled representation embedding, controlled by $\mathcal{C} \subseteq {\rm mod}\mbox{-}R$. Then there is an interpretation functor $I:{\rm Mod}\mbox{-}R\rightarrow {\rm Mod}\mbox{-}S$ such that $IFM\cong M$ for all $S$-modules $M$.

More generally, we have this for a ${\cal C}$-controlled representation embedding, $-\otimes _SB_R$, provided $B_R$ has a ${\cal C}$-preenvelope in ${\rm mod}\mbox{-}R$.
\end{theorem}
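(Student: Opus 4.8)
The plan is to build $I$ explicitly as the cokernel of a natural transformation between two representable functors on ${\rm Mod}\mbox{-}R$, arranged so that it manifestly commutes with direct products and direct limits; the role of the preenvelope hypothesis is precisely to make this possible. For the main, finitely controlled, statement I would first invoke Remark \ref{factoringcontrolledrepembeddings} to reduce to the case $B={}_SS_R$, so that $F$ is restriction of scalars along an inclusion $R\subseteq S$ and the controlling identity reads
\[{\rm Hom}_R(M,N)={\rm Hom}_S(M,N)\oplus{\rm Hom}_R(M,N)_{\cal C}\qquad(M\in{\rm fin}\mbox{-}S,\ N\in{\rm Mod}\mbox{-}S),\]
valid for arbitrary $N$ as noted there (any morphism from the finite-dimensional module $S$ factors through a finite-dimensional submodule of $N$). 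Since $S_R\in{\rm mod}\mbox{-}R$, Lemma \ref{factormap} gives an ${\rm add}(C)$-preenvelope $\Delta\colon S_R\to C_0$ with $C_0\in{\rm add}(C)$, and I set
\[I:={\rm coker}\bigl({\rm Hom}_R(\Delta,-)\colon{\rm Hom}_R(C_0,-)\longrightarrow{\rm Hom}_R(S_R,-)\bigr)\colon{\rm Mod}\mbox{-}R\longrightarrow{\bf Ab}.\]

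Next I would check that $I$ in fact takes values in ${\rm Mod}\mbox{-}S$ and is an interpretation functor. The bimodule ${}_SS_R$ makes ${\rm Hom}_R(S_R,N)$ a right $S$-module by $(fs)(x)=f(sx)$, that is, $fs=f\circ L_s$ with $L_s\colon S_R\to S_R$ left multiplication by $s$ (an $R$-endomorphism), and this is natural in $N$. The image of ${\rm Hom}_R(\Delta,N)$ is precisely the subgroup ${\rm Hom}_R(S_R,N)_{\cal C}$ of morphisms factoring through an object of ${\cal C}$: one inclusion holds since $C_0\in{\cal C}$, and for the other, a morphism $S_R\to C'\to N$ with $C'\in{\cal C}$ has its first leg factoring through the preenvelope $\Delta$. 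As precomposition with $L_s$ preserves ``factoring through ${\cal C}$'', this image is an $S$-submodule, so $I(N)={\rm Hom}_R(S_R,N)/{\rm Hom}_R(S_R,N)_{\cal C}$ is a right $S$-module, functorially in $N$. Moreover ${\rm Hom}_R(C_0,-)$ and ${\rm Hom}_R(S_R,-)$ commute with direct products (${\rm Hom}$ out of anything does) and with direct limits ($C_0$ and $S_R$ being finitely presented over $R$); cokernels commute with direct limits (both are colimits) and, products being exact in ${\rm Mod}\mbox{-}S$, with products as well. So $I$ is additive and commutes with direct products and direct limits, hence by the algebraic characterisation of interpretation functors (\cite[25.3]{PreMAMS}) it is one.

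Finally, for $M\in{\rm Mod}\mbox{-}S$ one has $FM=M$ with $R$ acting through the inclusion, so ${\rm Hom}_R(S_R,FM)={\rm Hom}_R(S_R,M)$; taking $S_S\in{\rm fin}\mbox{-}S$ in the first slot of the extended controlling identity gives a decomposition of \emph{right $S$-modules} ${\rm Hom}_R(S_R,M)={\rm Hom}_S(S_S,M)\oplus{\rm Hom}_R(S_R,M)_{\cal C}$. Hence $IFM={\rm Hom}_R(S_R,M)/{\rm Hom}_R(S_R,M)_{\cal C}\cong{\rm Hom}_S(S_S,M)\cong M$, the last isomorphism being $f\mapsto f(1)$, which is right $S$-linear; all of this is natural in $M$, so in fact $IF\cong{\rm id}_{{\rm Mod}\mbox{-}S}$. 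For the more general statement one does not reduce: working directly with $F=-\otimes_SB_R$, its right adjoint ${\rm Hom}_R({}_SB_R,-)$ and the hypothesised ${\cal C}$-preenvelope $\Delta\colon B_R\to C_0$, the same three steps go through, now with $FS\cong B_R$ in place of $S_R$ and the controlling identity applied to ${\rm Hom}_R(FS,FM)$ (again extended to arbitrary second argument, since $B_R$ is finitely presented over $R$).

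The verifications in the middle two paragraphs are the bulk of the writing but not the difficulty. The one genuine idea, which I expect to be the real obstacle, is the construction itself: the controlling decomposition is a splitting of ${\rm Hom}$-groups carrying no a priori definability, and one has to see that passing to the ${\rm Hom}_S$-summand — equivalently, killing the ${\cal C}$-part — can be effected by a single cokernel of a map ${\rm Hom}_R(C_0,-)\to{\rm Hom}_R(B_R,-)$, which is legitimate precisely because $B_R$ (respectively $S_R$) admits a ${\cal C}$-preenvelope with $C_0\in{\rm mod}\mbox{-}R$; that finiteness is exactly what forces $I$ to commute with direct products and direct limits. In the finitely controlled case it is Lemma \ref{factormap} that guarantees such a $\Delta$.
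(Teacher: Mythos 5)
Your proof is correct and follows essentially the same route as the paper's: reduce via Remark \ref{factoringcontrolledrepembeddings} to restriction of scalars, use Lemma \ref{factormap} (or the hypothesised preenvelope) to identify ${\rm Hom}_R(S,-)_{\mathcal{C}}$ with the image of ${\rm Hom}_R(\Delta,-)$, note that the controlling decomposition is one of right $S$-modules with the $\mathcal{C}$-part an $S$-subfunctor, and take $I$ to be the quotient ${\rm Hom}_R(S,-)/{\rm Hom}_R(S,-)_{\mathcal{C}}$, so that $IFM\cong M$. The only cosmetic divergence is that you certify that $I$ is an interpretation functor by checking commutation with direct products and direct limits and citing \cite[25.3]{PreMAMS}, whereas the paper instead observes that the quotient is a finitely presented, hence pp-definable, functor whose $S$-action is given by definable scalars; these are equivalent verifications of the same construction.
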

\begin{proof}  In order to define $I$, we must find a pp-pair $\phi/\psi$ of pp formulas such that, given an $R$-module $N$, the quotient $\phi(N)/\psi(N)$ will be the underlying group of $IN$ and, for each element $s\in S$, we have to show that multiplication by $s$ on that underlying group can be defined by a pp formula.

First note that it is enough to consider the case where $F$ is restriction of scalars from a ring $S$ to a subring $R$: use Remark \ref{factoringcontrolledrepembeddings} combined with the easily checked facts that Morita equivalences are interpretation functors (originally from \cite[1.1]{PoPr}), as is the inclusion of ${\rm Mod}\mbox{-}R'$ into ${\rm Mod}\mbox{-}R$ whenever $R\rightarrow R'$ is a surjection of rings.

Next, for every $R$-module $N$, if $s\in S$ and $f:S_R\rightarrow N$ factors through $\mathcal{C}$ then so does $f\cdot s$ where $f\cdot s$ means left multiplication by $s$ on $S$ followed by $f$ - the right $S$-module structure on ${\rm Hom}_R(S,N)$.  Thus the vector space decomposition $${\rm Hom}_R(S,N)={\rm Hom}_S(S,N)\oplus{\rm Hom}_R(S,N)_\mathcal{C} \hspace{4pt} (\ast\ast)$$ is a decomposition into right $S$-modules.  And, in the case that $N=FM$ is the reduction of an $S$-module, the first factor is isomorphic to $M$.  Also, if $g:N\rightarrow M$ is a morphism of $R$-modules then the induced map ${\rm Hom}_R(S,g): {\rm Hom}_R(S,N) \rightarrow {\rm Hom}_R(S,M)$, which is given by postcomposition with $g$, is a morphism of right $S$-modules and carries  ${\rm Hom}_R(S,N)_{\cal C}$ to ${\rm Hom}_R(S,M)_\mathcal{C}$.  So ${\rm Hom}_R(S,-)_\mathcal{C}$ is a subfunctor of ${\rm Hom}_R(S,-):{\rm Mod}\mbox{-}R\rightarrow {\rm Mod}\mbox{-}S$.

Let $C\in {\rm mod}\mbox{-}R$ be such that ${\cal C}= \text{add}(C)$. Since $S_R$ is finite-dimensional, by \ref{factormap}, there exists $\Delta:S_R\rightarrow C^n$ such that for all $f:S_R\rightarrow N$ which factor through $\mathcal{C}$, there is $h:C^n\rightarrow N$ such that $f=h \Delta$. Thus the image of $(\Delta,-):{\rm Hom}_R(C^n,-) \rightarrow {\rm Hom}_R(S,-)$ is exactly ${\rm Hom}_R(S,-)_\mathcal{C}$.  The latter, therefore, is a finitely presented functor (precisely, the $\varinjlim$-commuting extension of a finitely presented functor on ${\rm mod}\mbox{-}R$ to one on ${\rm Mod}\mbox{-}R$) hence, \cite[10.2.43]{PreNBK}, is pp-definable.  Therefore, the quotient, ${\rm Hom}_R(S,-) / {\rm Hom}_R(S,-)_\mathcal{C}$ also is finitely presented, equivalently is given by a pp-pair.   Furthermore, each action of an element of $S$ on this quotient is, as we have just seen, an endomorphism in the functor category, hence \cite[Theorem 10.2.30]{PreNBK} is pp-definable (a `definable scalar' on the sort ${\rm Hom}_R(S,-) / {\rm Hom}_R(S,-)_\mathcal{C})$.  Thus we have an interpretation functor $I={\rm Hom}_R(S,-) / {\rm Hom}_R(S,-)_\mathcal{C}:{\rm Mod}\mbox{-}R \rightarrow {\rm Mod}\mbox{-}S$.

If we apply $I$ to the $R$-reduction of an $S$-module $M$ then we have, by ($\ast\ast$),  \[{\rm Hom}_R(S,M) / {\rm Hom}_R(S,M)_\mathcal{C} \simeq {\rm Hom}_S(S,M) \simeq M,\] that is, $IFM\simeq M$, which is what we want.
\end{proof}

Thus we have shown that if $M$ is an $S$-module then its image $N=FM$ under $F$ still contains a copy of $M$ in the model-theoretic structure $N^{\rm eq+}$ which is $N$ expanded by all the pp-definable sorts and pp-definable functions between these sorts (see \cite{KP1} or \cite[Appx.~B2]{PreNBK} for a definition of that structure).  That is, applying the representation embedding has hidden the original module but not lost it.

By \cite[Lemma 2.4]{Han}, a finitely controlled wild finite-dimensional algebra $R$ has a finitely controlled representation embedding from the modules over any strictly wild finite-dimensional algebra. So we deduce that, in this case, ${\rm Mod}\mbox{-}R$ interprets the module category over any strictly wild finite-dimensional algebra.

\begin{cor}
Let $R$ be a finitely controlled wild finite-dimensional $k$-algebra. For every strictly wild finite-dimensional $k$-algebra $S$, there is an interpretation functor $I:{\rm Mod}\mbox{-}R\rightarrow {\rm Mod}\mbox{-}S$ whose image is the whole of ${\rm Mod}\mbox{-}S$.
\end{cor}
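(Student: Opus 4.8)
The plan is to combine the cited transitivity of (finitely) controlled wildness with Theorem~\ref{controlled restriction of scalars}. Fix a strictly wild finite-dimensional $k$-algebra $S$. Since $R$ is finitely controlled wild, \cite[Lemma 2.4]{Han} provides a finitely controlled representation embedding $F_0:{\rm fin}\mbox{-}S\rightarrow{\rm mod}\mbox{-}R$; as $S$ is finite-dimensional, ${\rm fin}\mbox{-}S={\rm mod}\mbox{-}S$, and we may fix $C\in{\rm mod}\mbox{-}R$ so that $F_0$ is controlled by ${\rm add}(C)$.

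Next I would promote $F_0$ to a functor defined on all of ${\rm Mod}\mbox{-}S$. Put $B=F_0(S)$, a finite-dimensional $(S,R)$-bimodule, and set $F=(-\otimes_SB_R):{\rm Mod}\mbox{-}S\rightarrow{\rm Mod}\mbox{-}R$. Because $F_0$ is right exact on ${\rm mod}\mbox{-}S$ and $F_0(S^n)=B^n$ compatibly with the maps $S^a\rightarrow S^b$ (which are matrices over $S$), comparing a presentation $S^a\rightarrow S^b\rightarrow M\rightarrow 0$ under $F_0$ and under $-\otimes_SB$ shows $F_0 M\cong M\otimes_S B$ naturally for $M\in{\rm mod}\mbox{-}S$; this is just the Eilenberg--Watts description of $F_0$ (cf.~Theorem~\ref{ew}). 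So the restriction of $F$ to ${\rm mod}\mbox{-}S$ is the finitely controlled representation embedding $F_0$, controlled by ${\rm add}(C)$, and Theorem~\ref{controlled restriction of scalars} applies to $F$: it yields an interpretation functor $I:{\rm Mod}\mbox{-}R\rightarrow{\rm Mod}\mbox{-}S$ with $IFM\cong M$ for every $S$-module $M$. (Alternatively one may invoke the ``more generally'' clause of Theorem~\ref{controlled restriction of scalars} directly, noting that the finite-dimensional module $B_R$ has an ${\rm add}(C)$-preenvelope by Lemma~\ref{factormap}.)

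Finally, the relation $IFM\cong M$ for all $M\in{\rm Mod}\mbox{-}S$ says that every $S$-module is isomorphic to one in the image of $I$; hence the image of $I$ is the whole of ${\rm Mod}\mbox{-}S$ (a fortiori it generates ${\rm Mod}\mbox{-}S$ as a definable subcategory), which is exactly the assertion. I expect the only step needing any care to be the construction of $F$ in the second paragraph, i.e.~checking that the chosen extension of $F_0$ to ${\rm Mod}\mbox{-}S$ restricts back to the given finitely controlled representation embedding so that the hypothesis of Theorem~\ref{controlled restriction of scalars} is literally satisfied; the rest is a direct appeal to results already established.
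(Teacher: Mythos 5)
Your proposal is correct and follows essentially the same route as the paper, which deduces the corollary in one line from \cite[Lemma 2.4]{Han} together with Theorem~\ref{controlled restriction of scalars}. The Eilenberg--Watts bookkeeping you add in the second paragraph merely makes explicit the extension of the finitely controlled representation embedding from ${\rm mod}\mbox{-}S$ to ${\rm Mod}\mbox{-}S$ that the paper leaves implicit.
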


Recall that $F= (-\otimes_S B_R)$ is said to be a {\bf strict representation embedding} if it is a full representation embedding, equivalently, if the induced $R\rightarrow {\rm End}(_SB)$ is a ring epimorphism; in this case, the image of $F$ is a definable subcategory of ${\rm Mod}\mbox{-}R$ (\cite[5.5.4]{PreNBK}) equivalent to ${\rm Mod}\mbox{-}S$.  It follows easily (e.g.~use results in \cite[\S XIX.1]{SiSk3}) that if $R$ is a strictly wild finite-dimensional algebra then, for every finitely generated algebra $T$, there is a definable subcategory of ${\rm Mod}\mbox{-}R$ equivalent to ${\rm Mod}\mbox{-}T$.

For finitely controlled wild algebras, the category of modules ``definably contains" the category of modules over any finitely generated algebra in the following weaker sense.

\begin{cor}
Let $R$ be a finitely controlled wild finite-dimensional $k$-algebra.  Then, for every finitely generated $k$-algebra $S$, there is an interpretation functor from some definable subcategory $\mathcal{C}$ of ${\rm Mod}\mbox{-}R$ to ${\rm Mod}\mbox{-}S$ whose image is the whole of ${\rm Mod}\mbox{-}S$.
\end{cor}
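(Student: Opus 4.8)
The plan is to bootstrap from the strictly wild case, already settled in the preceding corollary, using the remark recorded just above that the module category of an arbitrary finitely generated algebra embeds, as a definable subcategory, into that of a strictly wild finite-dimensional algebra. Fix a strictly wild finite-dimensional $k$-algebra $T$ (for instance the path algebra of the quiver with two vertices and three arrows between them). By the preceding corollary there is an interpretation functor $I:{\rm Mod}\mbox{-}R\rightarrow {\rm Mod}\mbox{-}T$ whose image is the whole of ${\rm Mod}\mbox{-}T$. Now let $S$ be a finitely generated $k$-algebra. Applying the structural remark above (\cite[5.5.4]{PreNBK}, \cite[\S XIX.1]{SiSk3}) with $T$ in the role of the strictly wild algebra, we obtain a definable subcategory $\mathcal{E}$ of ${\rm Mod}\mbox{-}T$ and an equivalence of categories $E:\mathcal{E}\rightarrow {\rm Mod}\mbox{-}S$. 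The candidate functor is then the composite $\mathcal{C}\xrightarrow{\,I\,}\mathcal{E}\xrightarrow{\,E\,}{\rm Mod}\mbox{-}S$, where $\mathcal{C}:=I^{-1}(\mathcal{E})=\{M\in{\rm Mod}\mbox{-}R : IM\in\mathcal{E}\}$.

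The one point that needs an argument is that $\mathcal{C}$ is a definable subcategory of ${\rm Mod}\mbox{-}R$, which I would establish via the characterisation of definable subcategories as those closed under direct products, direct limits and pure submodules (\cite[3.4.7]{PreNBK}). Closure under direct products and direct limits is immediate, since $I$ commutes with both operations and $\mathcal{E}$ has both closure properties. Closure under pure submodules comes down to the fact that interpretation functors preserve pure embeddings --- a pure-exact sequence is a direct limit of split exact sequences, split exact sequences are preserved by every additive functor, and $I$ commutes with direct limits, so $I$ carries a pure embedding in ${\rm Mod}\mbox{-}R$ to one in ${\rm Mod}\mbox{-}T$, and then closure of $\mathcal{E}$ under pure submodules finishes it. (Alternatively, one argues on the functor-category side: if $\mathcal{S}_{\mathcal{E}}\subseteq {\rm fun}\mbox{-}T$ is the Serre subcategory corresponding to $\mathcal{E}$ and $I_0$ is the exact functor attached to $I$, then formula $(\ast)$ and the reasoning of \ref{allemb} show that $\mathcal{C}$ is the definable subcategory of ${\rm Mod}\mbox{-}R$ corresponding to the Serre subcategory of ${\rm fun}\mbox{-}R$ generated by $I_0(\mathcal{S}_{\mathcal{E}})$.)

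Granting that $\mathcal{C}$ is definable, the rest is routine. Products and direct limits in the definable categories $\mathcal{C}$ and $\mathcal{E}$ coincide with those in ${\rm Mod}\mbox{-}R$ and ${\rm Mod}\mbox{-}T$ respectively, so the corestriction $I:\mathcal{C}\rightarrow\mathcal{E}$ still commutes with direct products and direct limits and hence is an interpretation functor; $E$ is an interpretation functor, being an equivalence of definable categories; and interpretation functors compose. The composite is onto ${\rm Mod}\mbox{-}S$ up to isomorphism: for $N\in\mathcal{E}$, since $I$ is onto ${\rm Mod}\mbox{-}T$ up to isomorphism there is $M\in{\rm Mod}\mbox{-}R$ with $IM\cong N$, so $M\in\mathcal{C}$ and $IM\cong N$ inside $\mathcal{E}$; thus $I:\mathcal{C}\rightarrow\mathcal{E}$ is essentially surjective, and applying $E$ gives a functor essentially surjective onto ${\rm Mod}\mbox{-}S$, as required.

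I expect the definability of $\mathcal{C}$ --- concretely, the preservation of pure embeddings by interpretation functors --- to be the only step requiring genuine care; everything else is assembled from the preceding corollary and the structural facts already in hand. For undecidability applications one may, when $S$ is in addition finitely presented as a $k$-algebra, follow this with \ref{extendtofinax} to replace $\mathcal{C}$ by a finitely axiomatisable $\mathcal{C}'\supseteq\mathcal{C}$ carrying an interpretation functor that extends the one above (and so still has image the whole of ${\rm Mod}\mbox{-}S$).
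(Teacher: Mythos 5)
Your argument is correct and follows essentially the same route as the paper: take a strictly wild finite-dimensional $T$, use the preceding corollary to get $I:{\rm Mod}\mbox{-}R\rightarrow{\rm Mod}\mbox{-}T$ onto ${\rm Mod}\mbox{-}T$, realise ${\rm Mod}\mbox{-}S$ as a definable subcategory of ${\rm Mod}\mbox{-}T$, and restrict $I$ to the preimage of that subcategory. The only difference is that you verify directly that $I^{-1}(\mathcal{E})$ is definable (via preservation of products, direct limits and pure embeddings), where the paper simply cites \cite[13.3]{PreMAMS} for this fact.
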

\begin{proof}
Let $T$ be a strictly wild finite-dimensional algebra.  Then, as observed above, there is a definable subcategory $\mathcal{D}$ of ${\rm Mod}\mbox{-}T$ which is equivalent to ${\rm Mod}\mbox{-}S$.  There is also a finitely-controlled representation embedding from ${\rm mod}\mbox{-}T$ to ${\rm mod}\mbox{-}R$ so, by \ref{controlled restriction of scalars} there is an interpretation functor $I:{\rm Mod}\mbox{-}R\rightarrow {\rm Mod}\mbox{-}T$ whose image is the whole of ${\rm Mod}\mbox{-}T$.  By \cite[13.3]{PreMAMS}, $I^{-1}\mathcal{D}$ is a definable subcategory of ${\rm Mod}\mbox{-}R$ and the restriction of $I$ to this definable subcategory is as required.
\end{proof}

In particular we can take $S=k\langle X,Y\rangle$.

\begin{cor}\label{fdtoinfd}
Let $R$ be a finitely controlled wild finite-dimensional $k$-algebra.  Then there is an interpretation functor from a definable subcategory $\mathcal{D}$ of ${\rm Mod}\mbox{-}R$ to ${\rm Mod}\mbox{-}k\langle X,Y\rangle$ whose image is all of ${\rm Mod}\mbox{-}k\langle X,Y\rangle$.
\end{cor}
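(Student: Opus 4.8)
The plan is simply to read off this statement as the special case $S=k\langle X,Y\rangle$ of the immediately preceding corollary. The noncommutative polynomial ring $k\langle X,Y\rangle$ is generated as a $k$-algebra by the two elements $X$ and $Y$, hence is finitely generated, so that corollary applies verbatim with this choice of $S$ and yields an interpretation functor from a definable subcategory $\mathcal{D}$ of ${\rm Mod}\mbox{-}R$ onto all of ${\rm Mod}\mbox{-}k\langle X,Y\rangle$. This is why the sentence ``In particular we can take $S=k\langle X,Y\rangle$'' already appears just before the statement.

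If one prefers to spell the argument out, I would unwind the previous corollary in this instance as follows. Fix a strictly wild finite-dimensional $k$-algebra $T$. By \cite[Lemma 2.4]{Han} there is a finitely controlled representation embedding ${\rm mod}\mbox{-}T\rightarrow{\rm mod}\mbox{-}R$, and then Theorem \ref{controlled restriction of scalars} produces an interpretation functor $I:{\rm Mod}\mbox{-}R\rightarrow{\rm Mod}\mbox{-}T$ with $IFM\cong M$ for all $T$-modules $M$, so the definable subcategory generated by the image of $I$ is the whole of ${\rm Mod}\mbox{-}T$. Since $T$ is strictly wild and $k\langle X,Y\rangle$ is finitely generated, there is (by the remark recalled just above, using \cite[\S XIX.1]{SiSk3}) a definable subcategory $\mathcal{D}'$ of ${\rm Mod}\mbox{-}T$ equivalent to ${\rm Mod}\mbox{-}k\langle X,Y\rangle$. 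By \cite[13.3]{PreMAMS}, $\mathcal{D}:=I^{-1}\mathcal{D}'$ is a definable subcategory of ${\rm Mod}\mbox{-}R$, and the restriction of $I$ to $\mathcal{D}$, followed by the equivalence $\mathcal{D}'\simeq{\rm Mod}\mbox{-}k\langle X,Y\rangle$, is an interpretation functor $\mathcal{D}\rightarrow{\rm Mod}\mbox{-}k\langle X,Y\rangle$. Its image is all of ${\rm Mod}\mbox{-}k\langle X,Y\rangle$: given $M\in{\rm Mod}\mbox{-}k\langle X,Y\rangle$, the corresponding module of $\mathcal{D}'$ lies in the image of $I$, so some module in $\mathcal{D}$ is sent to $M$.

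I do not expect any genuine obstacle here; all the real work has been done in Theorem \ref{controlled restriction of scalars} and in the previous two corollaries. The only point worth flagging is that the infinite-dimensionality of $k\langle X,Y\rangle$ rules out a direct application of Theorem \ref{controlled restriction of scalars}, which requires both algebras to be finite-dimensional; this is precisely the reason the argument is routed through a finite-dimensional strictly wild algebra $T$ together with a definable subcategory of ${\rm Mod}\mbox{-}T$ equivalent to ${\rm Mod}\mbox{-}k\langle X,Y\rangle$, exactly the device already employed in the preceding corollaries.
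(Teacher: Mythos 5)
Your proposal is correct and is exactly the paper's argument: the corollary is stated there with no separate proof precisely because it is the special case $S=k\langle X,Y\rangle$ of the preceding corollary (noting $k\langle X,Y\rangle$ is finitely generated), whose proof you have unwound faithfully via a strictly wild finite-dimensional $T$, Theorem \ref{controlled restriction of scalars}, and the pullback $I^{-1}\mathcal{D}'$ using \cite[13.3]{PreMAMS}.
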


\begin{cor}
Let $k$ be a countable recursively given field and $R$ a finitely controlled wild $k$-algebra. Then the theory of ${\rm Mod}\mbox{-}R$ is undecidable.
\end{cor}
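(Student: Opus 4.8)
The plan is to reduce an undecidable problem --- membership in the theory of ${\rm Mod}\mbox{-}k\langle X,Y\rangle$ (equivalently, the word problem for finitely presented groups) --- to membership in the theory of ${\rm Mod}\mbox{-}R$, using the interpretation functor produced in \ref{fdtoinfd} together with \ref{extendtofinax} to absorb the relevant definable subcategory into a single axiom. Write $W=k\langle X,Y\rangle$ and let $T_W$, $T_R$ be the theories of ${\rm Mod}\mbox{-}W$, ${\rm Mod}\mbox{-}R$, i.e.\ the sets of sentences true in every such module. Since $k$ is countable and recursively given, the languages $\mathcal{L}_W$, $\mathcal{L}_R$ of $W$- and of $R$-modules are recursive, so decidability of $T_W$, $T_R$ is a meaningful question; and it is known --- this is the strictly wild case of the conjecture, obtained by interpreting the word problem for (semi)groups --- that $T_W$ is undecidable (\cite{PreBk}, \cite{PreEpi}).

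First I would fix the interpretation. By \ref{fdtoinfd} there is a definable subcategory ${\cal D}\subseteq{\rm Mod}\mbox{-}R$ and an interpretation functor $I:{\cal D}\rightarrow{\rm Mod}\mbox{-}W$ whose image is all of ${\rm Mod}\mbox{-}W$. As $W$ is finitely presented as a $k$-algebra (free on $X,Y$), \ref{extendtofinax} lets me extend $I$ to an interpretation functor $I':{\cal D}'\rightarrow{\rm Mod}\mbox{-}W$ with ${\cal D}'\supseteq{\cal D}$ a finitely axiomatisable definable subcategory of ${\rm Mod}\mbox{-}R$, cut out by a single $\mathcal{L}_R$-sentence $\chi$ (a finite conjunction of pp-pair closure statements). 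Since $I'$ extends $I$ its image still contains the image of $I$, hence is all of ${\rm Mod}\mbox{-}W$; and $I'$ comes with explicit interpretation data: a pp-$m$-pair $\phi/\psi$ and pp-$2m$-formulas $\rho_X,\rho_Y$ giving the actions of $X$ and $Y$.

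Next I would carry out the standard relativisation. For a sentence $\sigma$ of $\mathcal{L}_W$ let $\sigma^{I'}$ be the $\mathcal{L}_R$-sentence obtained by relativising all quantifiers to the sort $\phi/\psi$, reading equality as congruence modulo $\psi$, and interpreting multiplication by $X$, $Y$ through $\rho_X$, $\rho_Y$; the defining property of interpretation functors (\cite[Chpt.~25]{PreMAMS}, \cite[18.2.1]{PreNBK}) gives $M\models\sigma^{I'}\iff I'M\models\sigma$ for every $M\in{\cal D}'$. Hence, for the single $\mathcal{L}_R$-sentence $\chi\rightarrow\sigma^{I'}$ one has $\chi\rightarrow\sigma^{I'}\in T_R$ iff $I'M\models\sigma$ for all $M\in{\cal D}'$, and --- because $I'$ maps ${\cal D}'$ onto ${\rm Mod}\mbox{-}W$ --- this holds iff $\sigma\in T_W$. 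Thus $\sigma\mapsto(\chi\rightarrow\sigma^{I'})$ is a reduction of $T_W$ to $T_R$; it is purely syntactic and, as $\chi,\phi,\psi,\rho_X,\rho_Y$ are fixed and effective over the recursive field $k$, it is computable. Since $T_W$ is undecidable, so is $T_R$.

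I do not anticipate a genuine obstacle: the substance is already in \ref{fdtoinfd} (which supplies the interpretation) and in \ref{extendtofinax} (which is exactly what permits a single guard sentence $\chi$, so that the reduction lands inside $T_R$ rather than inside the theory of a possibly non-finitely-axiomatisable subcategory). The only point requiring care is the bookkeeping of effectiveness --- noting that recursiveness of $k$ makes $\mathcal{L}_R$, $\mathcal{L}_W$ and all the pp-formulas involved recursive, so that decidability is a meaningful question on both sides and the reduction is effective. Alternatively one could run the argument with $kG$ in place of $W$, where $G$ is a finitely presented group with unsolvable word problem: the corollary just above \ref{fdtoinfd} (with $S=kG$, which is finitely presented as a $k$-algebra) and \ref{extendtofinax} give an interpretation functor from a finitely axiomatisable definable subcategory of ${\rm Mod}\mbox{-}R$ onto ${\rm Mod}\mbox{-}kG$, and since $u=_G v$ iff the $\mathcal{L}_{kG}$-sentence $\forall x\,(ux=vx)$ lies in the theory of ${\rm Mod}\mbox{-}kG$ (test on the regular module), the word problem of $G$ itself reduces to $T_R$; but the route through \ref{fdtoinfd} is shorter.
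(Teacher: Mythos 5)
Your argument is correct, but it takes a different route from the paper's own proof, so it is worth comparing the two. The paper does not go through $k\langle X,Y\rangle$ at all: it takes the finite-dimensional strictly wild algebra $S$ given by the $3$-Kronecker quiver, whose theory of modules is already known to be undecidable (\cite[17.3]{PreBk}), notes (via Han's Lemma 2.4) that there is a finitely controlled representation embedding ${\rm mod}\mbox{-}S\to{\rm mod}\mbox{-}R$, and then applies \ref{controlled restriction of scalars} to get an interpretation functor $I$ defined on \emph{all} of ${\rm Mod}\mbox{-}R$ with $IFM\cong M$; the translation $\sigma\mapsto\sigma'$ with $N\models\sigma'\iff IN\models\sigma$ then reduces satisfiability of $S$-sentences directly to satisfiability of $R$-sentences, with no relativising guard needed. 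Your route instead targets the infinite-dimensional algebra $W=k\langle X,Y\rangle$ via \ref{fdtoinfd}, which forces the domain to be a proper definable subcategory (as the Remark at the end of Section \ref{seccontr} shows it must be), and you then correctly compensate by invoking \ref{extendtofinax} to pass to a finitely axiomatisable ${\cal D}'$ and prefixing the guard sentence $\chi$; your verification that $\sigma\in T_W$ iff $\chi\rightarrow\sigma^{I'}\in T_R$, and that the reduction is effective over a recursively given countable $k$, is sound, and this is precisely the use of \ref{extendtofinax} that the paper advertises in the paragraph preceding it. What the paper's choice buys is economy: by staying with a finite-dimensional wild target it avoids both \ref{extendtofinax} and the guard; what your version buys is a reduction from the theory of the ``universal'' wild algebra $k\langle X,Y\rangle$ itself (and, in your closing variant, directly from the word problem of a finitely presented group), at the cost of the extra finite-axiomatisability step. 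Your side remark that $u=_G v$ iff $\forall x\,(ux=vx)$ holds in all $kG$-modules (tested on the regular module, using linear independence of group elements in $kG$) is also correct.
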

\begin{proof}
The algebra $S=k\xymatrix{\bullet \ar@/^/[r] \ar[r] \ar@/_/[r] & \bullet}$ is finite-dimensional and has undecidable theory of modules (see \cite[17.3]{PreBk} for a proof and references).  So there can be no decision procedure for determining, given a sentence $\sigma$ in the theory of $S$-modules, whether there is an $S$-module which satisfies $\sigma$.  There is a finitely controlled representation embedding of $S$-modules into $R$-modules, so let $I$ be as in \ref{controlled restriction of scalars}, giving an interpretation of $S$-modules in $R$-modules.  Then, using $I$, there is a sentence $\sigma'$ in the language of $R$-modules such that an $R$-module $N$ satisfies $\sigma'$ iff $IN$ satisfies $\sigma$.  Therefore there can be no decision procedure for determining, given a sentence in the language of $R$-modules, whether there is an $R$-module satisfying that sentence, as required.
\end{proof}

Examples of finitely controlled wild algebras include, at least in the case that $k$ is algebraically closed, all local wild $k$-algebras and wild $k$-algebras with square zero radical (\cite[Proposition p.~290, Theorem p.~291]{Han}), so the conclusions above apply for all these.  We would like to extend the conclusions to all wild algebras; so far as we are aware, there are, currently, no known wild algebras which are not (even finitely) controlled wild.

\begin{remark}
The question arises, in \ref{fdtoinfd} above, whether there is an interpretation functor as there with the domain being all of ${\rm Mod}\mbox{-}R$.  We show that, in fact, the domain must be a proper definable subcategory:  if $R$ is a finite-dimensional $k$-algebra and $S$ is an infinite dimensional $k$-algebra then there is no interpretation functor from ${\rm Mod}\mbox{-}R$ to ${\rm Mod}\mbox{-}S$ with the definable subcategory generated by the image being all of ${\rm Mod}\mbox{-}S$.

Suppose that $F\in ({\rm mod}\mbox{-}R,\textbf{Ab})^{\text{fp}}$. Then there exists $M,N\in {\rm mod}\mbox{-}R$ and $f:M\rightarrow N$ such that $F$ is the cokernel of \[(f,-):(N,-)\rightarrow (M,-).\]  Since $(M,-)$ is projective, any endomorphism of $F$ lifts to one of $(M,-)$ and one sees that $\text{End}F$ is a subquotient of $\text{End} (M)^\text{op}$ and hence is a finite-dimensional $k$-algebra.

Suppose that $I:{\rm Mod}\mbox{-}R \rightarrow {\rm Mod}\mbox{-}S$ is an interpretation functor, with effect on objects being given by the coherent functor extending $F\in ({\rm mod}\mbox{-}R,\textbf{Ab})^{\text{fp}}$ and with the definable subcategory generated by the image being all of ${\rm Mod}\mbox{-}S$. Then, by \ref{allemb}, $I_0:{\mathbb L}^{\rm eq+}_S\rightarrow {\mathbb L}^{\rm eq+}_R$ is faithful and sends the forgetful functor in ${\mathbb L}^{\rm eq+}_S$ to $F$. Thus we have a ring embedding from $S$ to $\text{End} F$ and hence $S$ must be a finite-dimensional $k$-algebra.
\end{remark}

\section{Tame does not interpret wild}

In this section we show that, for finite-dimensional $k$-algebras $R,S$ where $k$ is algebraically closed, if $I:{\rm Mod}\mbox{-}R\rightarrow {\rm Mod}\mbox{-}S$ is an interpretation functor such that $\langle I{\rm Mod}\mbox{-}R \rangle={\rm Mod}\mbox{-}S$, then $S$ wild implies $R$ is wild.  Here we use $\langle {\cal X} \rangle $ to denote the definable subcategory generated by the modules in ${\cal X}$; when ${\cal X}$ is the image of an interpretation functor it is easy to see that this is the closure of ${\cal X}$ under pure subobjects (see \cite[3.8]{PreAbex}). Our result, together with \ref{Nag}, gives us the following new definition of a wild algebra in terms of interpretation functors.

\begin{theorem}
Let $k$ be algebraically closed and let $S:=k\xymatrix{\bullet \ar@/^/[r] \ar[r] \ar@/_/[r] & \bullet}$. A finite-dimensional $k$-algebra $R$ is wild if and only if there is an interpretation functor $I:{\rm Mod}\mbox{-}R\rightarrow {\rm Mod}\mbox{-}S$ such that $\langle I{\rm Mod}\mbox{-}R\rangle={\rm Mod}\mbox{-}S$.
\end{theorem}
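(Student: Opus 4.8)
The statement is an equivalence, and I would prove the two implications separately. The implication ``$R$ wild $\Rightarrow$ such an $I$ exists'' is essentially a repackaging of the Nagase machinery of Section~\ref{secinterp}; the converse, ``such an $I$ exists $\Rightarrow$ $R$ wild'', carries the real content. For the converse, note that $S$ is a wild (indeed strictly wild) hereditary algebra, so, $k$ being algebraically closed, the tame--wild dichotomy (Drozd) reduces the claim to: if $R$ is \emph{tame} then there is no interpretation functor $I\colon{\rm Mod}\mbox{-}R\to{\rm Mod}\mbox{-}S$ with $\langle I{\rm Mod}\mbox{-}R\rangle={\rm Mod}\mbox{-}S$. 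This ``tame does not interpret wild'' assertion is exactly what is announced at the head of this section, and it is what the rest of the section would establish; the theorem then follows by applying it to this particular $S$. I would also spell out why the dimension inequalities already available are not enough: Corollary~\ref{KGfd} and the Remark closing Section~\ref{secinterp} give only ${\rm KG}(R)\le{\rm KG}(S)$, and the analogous argument with width shows only that $S$ wild forces the pp-lattice of $R$ to have undefined width; but neither Krull--Gabriel dimension nor width is known to detect wildness among finite-dimensional algebras, so from ${\rm KG}(S)=\infty$ (or undefined width of $S$) nothing follows about $R$, and a finer argument using the structure theory of tame algebras over an algebraically closed field is unavoidable.

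For the forward direction, assume $R$ is wild, so by definition there is a representation embedding $P\colon{\rm mod}\mbox{-}k\langle X,Y\rangle\to{\rm mod}\mbox{-}R$. There is a classical representation embedding $Q\colon{\rm mod}\mbox{-}S\to{\rm mod}\mbox{-}k\langle X,Y\rangle$: a representation of $S$, i.e.\ a triple of linear maps between two spaces $V,W$, is sent to $V\oplus W$ equipped with the idempotent projection onto $V$ and the three nilpotent operators that encode the three arrows, exhibiting ${\rm mod}\mbox{-}S$ as a representation embedding into modules over the free algebra on four generators, and the number of free generators is then cut down to two by the usual block-matrix device. The composite $F:=P\,Q$ is a composite of two tensor functors, hence of the form $-\otimes_S B_R$; it is again exact and again preserves indecomposability and reflects isomorphism, so is a representation embedding, and it restricts to a functor ${\rm mod}\mbox{-}S\to{\rm mod}\mbox{-}R$, so $B_R=F(S_S)$ is finitely presented and, $R$ being finite-dimensional, finite-dimensional. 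Now let $I:=({}_SB_R,-)\colon{\rm Mod}\mbox{-}R\to{\rm Mod}\mbox{-}S$ be the right adjoint of $-\otimes_S B_R$; it commutes with direct products always and with direct limits because $B_R$ is finitely presented, hence is an interpretation functor (compare \ref{repembinterp}). By Nagase's Proposition~\ref{Nag}, for every finite-dimensional $S$-module $N$ we have that $N$ is a direct summand of $({}_SB_R,\,FN)=I(FN)$, so $N\in\langle I{\rm Mod}\mbox{-}R\rangle$; since the finite-dimensional indecomposables are dense in ${\rm Zg}_S$, a proper definable subcategory of ${\rm Mod}\mbox{-}S$ must omit one of them, whence $\langle I{\rm Mod}\mbox{-}R\rangle={\rm Mod}\mbox{-}S$. (This is precisely the computation already recorded after \ref{Nag} using Lemma~\ref{allemb}.)

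There remains the converse, that a tame $R$ cannot interpret a wild $S$ with dense image, and this is where essentially all the work lies. My plan is to invoke Crawley-Boevey's characterisation of tameness over an algebraically closed field: $R$ is tame iff for every $d\in\mathbb N$ there are only finitely many generic $R$-modules of endolength $d$, whereas a wild algebra has, for some $d$, infinitely many. Generic modules are precisely the infinite-dimensional indecomposable points of the Ziegler spectrum of finite endolength, and the interpretation functor $I$, being given by a fixed pp-$m$-pair $\phi/\psi$ together with fixed pp formulas for the scalars, sends modules of finite endolength to modules of finite endolength with a bound on the endolength depending only on $I$. Using that $I_0\colon{\rm fun}\mbox{-}S\to{\rm fun}\mbox{-}R$ is faithful (Lemma~\ref{allemb}), one would associate to each generic $S$-module of endolength $d$ a \emph{generic} $R$-module of endolength bounded in terms of $d$, in a manner injective enough that finiteness of the supply of the latter (tameness of $R$) forces finiteness of the supply of the former, contradicting wildness of $S$. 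The main obstacle is exactly this transfer: controlling how $I$, and the induced exact functor $I_0$ on functor categories, move the generic points of ${\rm Zg}_S$ and their endolengths, and in particular arranging that the ``preimage'' of a generic $S$-module can be taken to be a generic, not merely finite-endolength, $R$-module. This step is what genuinely needs the algebraic closure of $k$ and the classification-theoretic input for tame algebras; by contrast, producing $I$, verifying that it is an interpretation functor, and the dense-image property are formal bookkeeping with pp-pairs and the dictionary of Section~\ref{secinterp}.
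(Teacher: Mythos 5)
Your forward direction is essentially the paper's argument: compose representation embeddings to get $-\otimes_S B_R:{\rm mod}\mbox{-}S\to{\rm mod}\mbox{-}R$ from wildness of $R$, take the right adjoint $I=({}_SB_R,-)$, which is an interpretation functor, and use Nagase's Proposition \ref{Nag} together with density of the finite-dimensional indecomposables in ${\rm Zg}_S$ to get $\langle I{\rm Mod}\mbox{-}R\rangle={\rm Mod}\mbox{-}S$. That half is correct and matches the paper.

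The converse is where the content of the theorem lies, and there your proposal has a genuine gap. You reduce it, correctly, to the assertion that a tame $R$ cannot admit such an $I$ with $S$ wild, but your proposed proof of that assertion -- associate to each generic $S$-module of endolength $d$ a generic $R$-module of bounded endolength, ``in a manner injective enough'' -- is exactly the step you flag as ``the main obstacle'', and it is left unresolved. It is also genuinely problematic as stated: $I$ points from ${\rm Mod}\mbox{-}R$ to ${\rm Mod}\mbox{-}S$, so producing a generic $R$-module from a generic $S$-module means going backwards along $I$, and faithfulness of $I_0$ (Lemma \ref{allemb}) only guarantees that \emph{some} $R$-module opens the pulled-back pair; it gives no control over whether an indecomposable, let alone generic, point of bounded endolength does so. (Your background claim that wildness of $S$ yields infinitely many generics of some fixed endolength is itself of the same depth as the theorem of Krause/Crawley-Boevey you would need anyway.) The paper circumvents this by working with finite-dimensional modules instead of generics: for $N\in{\rm ind}_d\mbox{-}S$ it takes an isolating pp-pair of controlled size (\ref{isolatingbound}), pulls it back along $I$ keeping the number of bound variables bounded by a function of $d$ only (\ref{functorbd}), deduces that every such $N$ is a direct summand of $IN'$ for some $N'\in{\rm mod}\mbox{-}R$ with $\dim N'\leq b_d$ (\ref{boundingdimneededtohitfds}, using \ref{fdinimfd}), and then applies Krause's Corollary 10.8 (\ref{krausetransendtame}) together with ``tame $=$ endofinitely tame'' over an algebraically closed field to conclude \ref{onlywildintwild}. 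Without that argument, or a worked-out substitute for your transfer of generic modules, the ``only if'' direction is not proved; deferring to ``what the rest of the section would establish'' cannot stand in for the missing step.
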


Of course we could replace $k\xymatrix{\bullet \ar@/^/[r] \ar[r] \ar@/_/[r] & \bullet}$ in the above theorem by any other finite-dimensional wild algebra. 

To prove this, we show, \ref{boundingdimneededtohitfds}, that if $I:{\rm Mod}\mbox{-}R\rightarrow {\rm Mod}\mbox{-}S$ is an interpretation functor such that $\langle I{\rm Mod}\mbox{-}R\rangle={\rm Mod}\mbox{-}S$ then for each $d\in\mathbb{N}$ there is $n_d\in \mathbb{N}$ such that for all $M\in{\rm mod}\mbox{-}S$ of dimension less than or equal to $d$ there is $N\in{\rm mod}\mbox{-}R$ such that $M|IN$ and $\dim N\leq n_d$. A result of Krause, \cite[10.8]{KraSpec}, then shows that if $S$ is wild then $R$ is also wild.

Note that if $I:{\rm Mod}\mbox{-}R\rightarrow {\rm Mod}\mbox{-}S$ is an interpretation functor and $R$ is a finite-dimensional $k$-algebra then, since $I$ is given on objects by a subquotient of a finite power of the forgetful functor, for any finite-dimensional module $M\in {\rm Mod}\mbox{-}R$, $IM$ is finite-dimensional.

\begin{definition}
Let $R$ be a $k$-algebra. We write ${\rm ind}\mbox{-}R$ for the set of (isomorphism classes of) indecomposable finite-dimensional $R$-modules and for each $d\in\mathbb{N}$, ${\rm ind}_d\mbox{-}R$ will denote the set of those of dimension $d$.
\end{definition}

Suppose that $M\in{\rm ind}\mbox{-}R$, $a\in M$ is non-zero and $f:M\rightarrow N$ is left minimal almost split. Let $\phi$ generate the pp-type of $a$ in $M$ and let $\psi$ generate the pp-type of $f(a)$ in $N$.  Then, \cite[13.1]{PreBk}, $(\phi/\psi)$ isolates $M$ in $\text{Zg}_R$. We recall the argument here for the convenience of the reader. Suppose that $L\in{\rm mod}\mbox{-}R$ and $b\in\phi(L)$. There exists a map $g:M\rightarrow L$ such that $g(a)=b$. Now either $g$ is a section and hence $M$ is a direct summand of $L$ or there is a map $h:N\rightarrow L$ such that $hf=g$. So either $M$ is a direct summand of $L$ or $hf(a)=b$ and hence $b\in\psi(L)$. Thus, either the pp-type of $b$ is generated by $\phi$ or $b\in\psi(L)$. So for any pp-$1$-formula $\sigma<\phi$, we have $\sigma\leq \psi$. Now suppose that $L\in{\rm Mod}\mbox{-}R$ and $b\in\phi(L)$. Either $b\in\psi(L)$ or the pp-type of $b$ is generated by $\phi$. Thus, by \cite[4.3.48]{PreNBK}, $M$ is a direct summand of $L$.

So, for any module $M\in{\rm ind}\mbox{-}R$, there is a coherent functor $\overrightarrow{F_{\phi/\psi}}:{\rm Mod}\mbox{-}R\rightarrow \textbf{Ab}$ (with $\phi/\psi$ chosen as in the previous paragraph) such that for any $L\in{\rm Mod}\mbox{-}R$, $\overrightarrow{F_{\phi/\psi}}(L)\neq 0$ implies $M|L$.

\begin{proposition}\label{fdinimfd}
Let $R,S$ be finite-dimensional $k$-algebras. Let $\mathcal{D}$ be a definable subcategory of ${\rm Mod}\mbox{-}R$ which is generated as such by the finite-dimensional modules in it, meaning that $\mathcal{D}=\langle \mathcal{D}\cap{\rm mod}\mbox{-}R\rangle$. Let $I:\mathcal{D}\rightarrow {\rm Mod}\mbox{-}S$ be an interpretation functor such that $\langle I\mathcal{D}\rangle ={\rm Mod}\mbox{-}S$.
Then for every $M\in {\rm mod}\mbox{-}S$ there is $N\in\mathcal{D}\cap{\rm mod}\mbox{-}R$ such that $M$ is a direct summand of $IN$.
\end{proposition}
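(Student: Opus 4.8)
The plan is to reduce to the case that $M$ is indecomposable, produce a coherent functor on ${\rm Mod}\mbox{-}S$ that detects $M$ as a summand, transport it along $I$ to a nonzero object of ${\rm fun}(\mathcal{D})$, and then use the hypothesis $\mathcal{D}=\langle\mathcal{D}\cap{\rm mod}\mbox{-}R\rangle$ to choose the required witness from among the finite-dimensional $R$-modules.

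First I would reduce to $M\in{\rm ind}\mbox{-}S$. Writing $M=\bigoplus_{i=1}^r M_i$ with each $M_i$ indecomposable, if for each $i$ we have found $N_i\in\mathcal{D}\cap{\rm mod}\mbox{-}R$ with $M_i\,|\,IN_i$, then $N:=\bigoplus_i N_i$ again lies in $\mathcal{D}\cap{\rm mod}\mbox{-}R$ and, since $I$ is additive, $IN\cong\bigoplus_i IN_i$, so $M\,|\,IN$ (the case $\mathcal{D}=0$, which forces $S=0$, being trivial). So fix an indecomposable $M$. Applying to the finite-dimensional algebra $S$ the discussion immediately preceding this proposition (which was phrased there for $R$ but uses only that $S$ is a finite-dimensional algebra), we obtain pp-$1$-formulas $\psi\leq\phi$ for $S$-modules such that $\phi/\psi$ isolates $M$ in ${\rm Zg}_S$; put $G:=\overrightarrow{F_{\phi/\psi}}\in{\rm fun}\mbox{-}S$. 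Since $\phi/\psi$ isolates $M$ we have $GM=\phi(M)/\psi(M)\neq 0$, so $G\neq 0$, and for every $L\in{\rm Mod}\mbox{-}S$ one has $GL\neq 0\Rightarrow M\,|\,L$.

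Next, form the object $I_0G\in{\rm fun}(\mathcal{D})$, which by $(\ast)$ is given on $N\in\mathcal{D}$ by $(I_0G)(N)=G(IN)$. The hypothesis $\langle I\mathcal{D}\rangle={\rm Mod}\mbox{-}S$ gives, by the argument of \ref{allemb} with $\mathcal{D}$ in the role of ${\rm Mod}\mbox{-}R$, that $I_0:{\rm fun}\mbox{-}S\to{\rm fun}(\mathcal{D})$ is faithful, hence $I_0G\neq 0$ because $G\neq 0$. Let $X\subseteq{\rm Zg}_R$ be the closed set corresponding to $\mathcal{D}$. On the one hand, a nonzero object of ${\rm fun}(\mathcal{D})$ has nonempty open support in $X$, which here is the set $\{N\in X: G(IN)\neq 0\}$. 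On the other hand, the hypothesis $\mathcal{D}=\langle\mathcal{D}\cap{\rm mod}\mbox{-}R\rangle$ says precisely that $X$ is the closure in ${\rm Zg}_R$ of the set of finite-dimensional indecomposables lying in $\mathcal{D}$ (using that finite-dimensional modules over a finite-dimensional algebra are pure-injective, together with the correspondence between definable subcategories of ${\rm Mod}\mbox{-}R$ and closed subsets of ${\rm Zg}_R$, \cite{PreNBK}); so these points are dense in $X$. Therefore there is $N\in\mathcal{D}\cap{\rm ind}\mbox{-}R$ with $G(IN)\neq 0$, and then $IN$ is finite-dimensional (as $I$ is given by a subquotient of a finite power of the forgetful functor) and $M\,|\,IN$, as required.

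The genuinely delicate step is the last one. Faithfulness of $I_0$ alone only produces some, a priori infinite-dimensional, $N\in\mathcal{D}$ with $G(IN)\neq 0$, and the conclusion as stated is false without the density hypothesis; so the crux of the argument is the interplay between ``a nonzero coherent functor has nonempty open support'' and ``the finite-dimensional points are dense in $X$''. I would therefore want to cite the precise forms of these two facts from \cite{PreNBK}: in particular, that a definable subcategory generated by its finite-dimensional modules has those as a dense set of points in its Ziegler spectrum.
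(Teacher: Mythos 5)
Your proof is correct and follows essentially the same route as the paper's: isolate the indecomposable $M$ by the coherent functor coming from the left almost split map, pull it back along $I$, and use the two generation hypotheses to find a finite-dimensional witness in $\mathcal{D}$. Your final step, phrased via nonempty open support in ${\rm Zg}_R$ and density of the finite-dimensional points of $\mathcal{D}$, is just the topological restatement of the paper's observation that the nonzero coherent functor $FI$ on $\mathcal{D}$ cannot vanish on the generating class $\mathcal{D}\cap{\rm mod}\mbox{-}R$.
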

\begin{proof}
Let $M\in {\rm ind}\mbox{-}S$. By the argument directly preceding this proposition, there is a coherent functor as above, $F:{\rm Mod}\mbox{-}S\rightarrow \textbf{Ab}$ such that for all $L\in{\rm Mod}\mbox{-}S$, $FL\neq 0$ implies $M|L$. Since $\langle I\mathcal{D}\rangle ={\rm Mod}\mbox{-}S$, there is $N\in\mathcal{D}$ such that $M|IN$, so $F(IN) \neq 0$.  By our assumption on ${\cal D}$ and the fact that $FI:\mathcal{D}\rightarrow\textbf{Ab}$ is a coherent functor, there is a finite-dimensional $R$-module $N' \in {\cal D}$ such that $FI(N') \neq 0$. So $M|IN'$, as required.
 \end{proof}

We now show that, as $M$ varies over indecomposables of bounded dimension, the ``size" of $\phi$, chosen to be part of an isolating pp-pair for $M$ as above, also is bounded above.  Here, by the size of $\phi$ we could mean just the number of symbols in $\phi$ or, more meaningfully, the number of variables and equations appearing in $\phi$.  Indeed, these bounds arise from a minimal projective presentation of $M$, with the number of variables needed being determined by the number of generators needed for $M$ and the number of equations being determined by the number of relations on those generators needed to present $M$.

\begin{definition}
If $\phi$ is a pp-$n$-formula over $R$ then $c(\phi)$ is the number of existentially quantified variables in $\phi$ and $d(\phi)$ is the number of equations in $\phi$ i.e $\phi$ is of the form
\[\exists y_1,\dots ,y_{c(\phi)} (\overline{x} \, \overline{y}) A=0\] where $A$ is an $(n+c(\phi))\times d(\phi)$ matrix with entries from $R$.
\end{definition}

\begin{lemma}\label{isolatingbound}
Suppose that $M\in{\rm ind}_d\mbox{-}R$. Then there is a pp-$1$-pair $\phi/\psi$ such that for all $L\in{\rm Mod}\mbox{-}R$, $\phi(L)/\psi(L)\neq 0$ if and only if $M|L$ and such that $\phi$ has the form
\[\exists y_1,\dots ,y_d (x,\overline{y})A=0\] where $A$ is a $(1+d)\times(1+e)$ matrix with $e\leq \dim M \cdot \dim R$. That is $c(\phi)=d$ and $d(\phi)\leq d\cdot \dim R +1$.
\end{lemma}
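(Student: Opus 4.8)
The plan is to re-run the construction of an isolating pp-pair $(\phi/\psi)$ for $M$ recalled in the discussion just before \ref{fdinimfd}; the only new point is that the formula $\phi$ generating the pp-type of a chosen nonzero element of $M$ must be picked so that its syntactic size is controlled, and this control will be read off from a projective presentation of $M$ with a bounded number of generators and relations.

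Concretely, fix a $k$-basis $m_1,\dots,m_d$ of $M$ and put $a=m_1$. Since the $m_i$ span $M$ over $k$ they generate $M$ over $R$, so there is a short exact sequence $0\to K\to R^d\to M\to 0$ with $e_i\mapsto m_i$. Counting $k$-dimensions gives $\dim_k K=d\cdot\dim_k R-d$, so $K$ is generated, as an $R$-module, by some $e\leq d\cdot\dim_k R-d$ elements (take a $k$-spanning set of $K$). Writing these generators as the rows of an $e\times d$ matrix $H$ over $R$, we get the quantifier-free pp-$d$-formula $\theta_H(\overline{x})$ asserting that the $e$ relations given by the rows of $H$ hold; by the standard correspondence between finite presentations and quantifier-free pp formulas (\cite[\S 1.2.2]{PreNBK}), $(M,\overline{m})$ is a free realisation of $\theta_H$. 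Now set
\[\phi(x)\ :=\ \exists y_1,\dots,y_d\ \big(\theta_H(y_1,\dots,y_d)\wedge x=y_1\big),\]
which has exactly the required shape $\exists y_1,\dots,y_d\,\big((x,\overline{y})A=0\big)$: the matrix $A$ has $1+d$ rows and $1+e$ columns, its first column encoding $x-y_1=0$ and its remaining $e$ columns encoding the rows of $H$ (with a zero entry in the $x$-row). Thus $c(\phi)=d$ and $d(\phi)=e+1\leq d\cdot\dim_k R+1$ with $e\leq\dim M\cdot\dim_k R$, as claimed.

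It remains to check that $(M,a)$ is a free realisation of $\phi$. As $M$ is finitely presented and $a=m_1\in\phi(M)$ (witnessed by $\overline{y}=\overline{m}$), it suffices to see that $\phi$ generates the pp-type of $a$ in $M$. So take $\chi\in{\rm pp}^1_R$ with $a\in\chi(M)$; given any module $N$ and $b\in\phi(N)$, witnessed by $\overline{n}\in\theta_H(N)$ with $b=n_1$, the fact that $(M,\overline{m})$ is a free realisation of $\theta_H$ gives $g:M\to N$ with $g(m_i)=n_i$, so $g(a)=b$, and then $b=g(a)\in\chi(N)$ since $g$ preserves pp formulas. Hence $\phi\leq\chi$, so $\phi$ is the least element of the pp-type of $a$, i.e.\ generates it.

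Finally, with this $\phi$ fixed, choose a left minimal almost split morphism $f:M\to N_0$ (which exists as $M$ is a finite-dimensional indecomposable) and let $\psi$ generate the pp-type of $f(a)$ in $N_0$. Then, arguing exactly as in the discussion preceding \ref{fdinimfd} and citing \cite[13.1]{PreBk} --- together with the observation that $M\mid L$ forces $\phi(L)/\psi(L)\supseteq\phi(M)/\psi(M)\neq 0$, since solution sets split over direct summands, for the reverse implication --- one obtains that for every $L\in{\rm Mod}\mbox{-}R$, $\phi(L)/\psi(L)\neq 0$ if and only if $M\mid L$. I do not anticipate a genuine obstacle here: the two places where a little care is needed are the dimension count $\dim_k K=d\cdot\dim_k R-d$ together with the remark that a $k$-spanning set of $K$ is already an $R$-generating set (this is what keeps $e$ below $\dim M\cdot\dim R$), and the purely clerical translation of $\exists\overline{y}\,(\theta_H(\overline{y})\wedge x=y_1)$ into the matrix form $(x,\overline{y})A=0$ of the prescribed dimensions.
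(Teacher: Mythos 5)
Your proof is correct and takes essentially the same route as the paper's: present $M$ by a $k$-basis together with a relation matrix $H$ having at most $d\cdot\dim R$ columns, let $\phi$ be the resulting formula with $d$ existential quantifiers generating the pp-type of a chosen nonzero element, and combine this with the left minimal almost split argument (\cite[13.1]{PreBk}) recalled before \ref{fdinimfd} to get the isolating pair. The only differences are cosmetic: you fix $a=m_1$ where the paper takes an arbitrary nonzero $a=\overline{b}G$, and you spell out (via the free realisation of the quantifier-free formula) the generation claim that the paper delegates to \cite{PreNBK}.
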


\begin{proof}
Suppose that $M\in{\rm ind}_d\mbox{-}R$.  We saw in the comments preceding \ref{fdinimfd} that $\phi$ may be taken to be a pp formula generating the pp-type of any chosen non-zero element $a\in M$.

Let $b_1,\dots ,b_d$ generate $M$ as a $k$-vector space. Let $H$ be a matrix with entries from $R$ such that if $N$ is a module with generators $e_1,\dots ,e_d$ and $\overline{e}H=0$ then there is a surjective map $t:M\rightarrow N$ sending $b_i$ to $e_i$ for $1\leq i\leq d$ (i.e.~the columns of $H$ generate the $R$-linear relations on the $b_i$). Note that $H$ is a $d\times e$ matrix where $e\leq \dim M \times \dim R$.

Given $a\in M$ we have $a=\overline{b}G$ for some $d\times 1$ matrix $G$. The pp-type of $a$ in $M$ is generated by the pp formula
\[\exists y_1,\dots ,y_d (x,\overline{y})
\left(
                       \begin{array}{cc}
                         1 & 0 \\
                         -G & H \\
                       \end{array}
                     \right)=0.\]
See \cite{PreNBK} p.~21 for details.
\end{proof}

\begin{theorem}\label{functorbd}\marginpar{functorbd}
Let $R,S$ be finite-dimensional $k$-algebras. Suppose $I:{\rm Mod}\mbox{-}R\rightarrow{\rm Mod}\mbox{-}S$ is an interpretation functor such that $\langle I{\rm Mod}\mbox{-}R\rangle ={\rm Mod}\mbox{-}S$. For every $d\in {\mathbb N}$ there is $n_d\in \mathbb{N}$ such that, if $N\in{\rm ind}_d\mbox{-}S$, then there is a pp-$m$-pair $\sigma/\tau$ with $c(\sigma)\leq n_d$ such that for all $M\in{\rm Mod}\mbox{-}R$, $\sigma(M)/\tau(M) \neq 0$ if and only if $N|IM$.
\end{theorem}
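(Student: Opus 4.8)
The plan is to pull back, along $I$, the pp-pair isolating $N$ over $S$ to a pp-pair over $R$, while keeping careful track of how many existentially quantified variables this costs. Fix $N\in{\rm ind}_d\mbox{-}S$. By \ref{isolatingbound}, applied over $S$, there is a pp-$1$-pair $\phi_N/\psi_N$ over $S$ with $\phi_N$ of the form $\exists y_1,\dots,y_d\,(x,\overline{y})A=0$, where $A$ is a $(1+d)\times(1+e)$ matrix with entries in $S$ and $e\le d\cdot\dim_k S$, such that $\phi_N(L)/\psi_N(L)\neq 0$ if and only if $N|L$, for every $S$-module $L$. Hence it suffices to produce a pp-pair $\sigma/\tau$ over $R$, with $c(\sigma)$ bounded in terms of $d$ alone (the functor $I$ and the algebra $S$ being fixed throughout), for which $\sigma(M)/\tau(M)\cong\phi_N(IM)/\psi_N(IM)$ naturally in $M\in{\rm Mod}\mbox{-}R$; applying \ref{isolatingbound} to the $S$-module $L=IM$ then gives $\sigma(M)/\tau(M)\neq 0$ iff $N|IM$, which is the assertion.

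Recall from Section \ref{secinterp} that $I$ is specified by fixed data: a pp-$\ell$-pair $\phi_I/\psi_I$ over $R$ with $IM=\phi_I(M)/\psi_I(M)$, together with pp-$2\ell$-formulas $\rho_{s_1},\dots,\rho_{s_r}$ ($r=\dim_k S$) defining the actions of a $k$-basis $s_1,\dots,s_r$ of $S$. Abstractly $\sigma/\tau$ is the image of $\phi_N/\psi_N$ under the exact functor $I_0:{\mathbb L}^{\rm eq+}_S\rightarrow{\mathbb L}^{\rm eq+}_R$, and ($\ast$) gives $(I_0 F_{\phi_N/\psi_N})(M)=F_{\phi_N/\psi_N}(IM)=\phi_N(IM)/\psi_N(IM)$; to control the number of quantifiers I would write out the standard syntactic pullback. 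Replace the free variable $x$ of $\phi_N$ by a block $\overline{x}'$ of $\ell$ new free variables (so $\sigma$ is a pp-$\ell$-formula, i.e.\ $m=\ell$), index the rows of $A$ by $p=0,\dots,d$, attach $\overline{x}'$ to row $0$ and, to each row $p\ge 1$, an existentially quantified block $\overline{z}_p$ of $\ell$ variables constrained to lie in $\phi_I$, and write $\overline{x}'_p$ for the block attached to row $p$. For each entry $a_{pq}$ of $A$ introduce an existentially quantified block $\overline{u}_{pq}$ of $\ell$ variables together with a clause expressing that $\overline{u}_{pq}$ represents $a_{pq}\cdot\overline{x}'_p$ in $IM$: writing $a_{pq}=\sum_{i=1}^{r}c_{pqi}s_i$ with $c_{pqi}\in k$, this clause is $\exists\overline{w}_1,\dots,\overline{w}_r\bigl(\bigwedge_{i=1}^{r}\rho_{s_i}(\overline{x}'_p,\overline{w}_i)\wedge\psi_I(\overline{u}_{pq}-\sum_{i=1}^{r}c_{pqi}\overline{w}_i)\bigr)$. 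Finally, for each column $q$ of $A$ impose $\psi_I\!\bigl(\sum_{p=0}^{d}\overline{u}_{pq}\bigr)$. Let $\sigma$ be the conjunction of all these clauses, existentially quantified over everything except $\overline{x}'$, and let $\tau$ be the pullback of $\psi_N$, constructed the same way with $\psi_N$ in place of $\phi_N$. Tracing through the definitions, $\sigma(M)/\tau(M)=\phi_N(IM)/\psi_N(IM)$ for every $R$-module $M$, and $\sigma\ge\tau$ since $\phi_N\ge\psi_N$ and the pullback is order-preserving, being induced by the exact functor $I_0$; so $\sigma/\tau$ is a genuine pp-pair.

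It remains to bound $c(\sigma)$. The blocks $\overline{z}_1,\dots,\overline{z}_d$ contribute $d\ell$ quantified variables, plus $d\cdot c(\phi_I)$ for the constraints $\phi_I(\overline{z}_p)$. The matrix $A$ has at most $(1+d)(1+d\cdot\dim_k S)$ entries, and for each entry $a_{pq}$ the associated clause contributes one block $\overline{u}_{pq}$ (length $\ell$), the $r$ blocks $\overline{w}_i$ (length $\ell$ each), and $\sum_{i=1}^{r}c(\rho_{s_i})+c(\psi_I)$ variables from the fixed formulas; the $1+e\le 1+d\cdot\dim_k S$ column constraints each contribute $c(\psi_I)$. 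Since $\ell$, $r=\dim_k S$, $c(\phi_I)$, $c(\psi_I)$ and $c(\rho_{s_1}),\dots,c(\rho_{s_r})$ depend only on $S$ and on the fixed functor $I$, adding these up shows $c(\sigma)$ is at most a fixed quadratic polynomial in $d$; take $n_d$ to be its value. This yields the required pp-pair.

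The step I expect to be the main obstacle is the bookkeeping in the middle two paragraphs, and within it two points in particular: first, that translating ``$\overline{x}'_p$ is acted on by a general element $a\in S$'', rather than a basis element, costs only boundedly many extra variables — which is precisely why one expands $a$ in the fixed basis and uses a single linear-combination clause with $r$ auxiliary blocks; and second, that the nested existential quantifiers inherited from the fixed formulas $\rho_{s_i}$, $\phi_I$, $\psi_I$ add only a constant (in $d$) per matrix entry, so that the total grows linearly in the number of entries and hence quadratically in $d$. Note that the standing hypothesis $\langle I{\rm Mod}\mbox{-}R\rangle={\rm Mod}\mbox{-}S$ is not actually needed for this statement on its own (the identity $\sigma(M)/\tau(M)=\phi_N(IM)/\psi_N(IM)$ together with \ref{isolatingbound} already gives the equivalence); it enters only afterwards, in combination with \ref{fdinimfd}, when one wants to bound $\dim N$ for the modules $N$ dividing some $IM$.
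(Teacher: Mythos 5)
Your proof is correct and is essentially the paper's own argument: take the isolating pp-pair for $N$ supplied by \ref{isolatingbound}, pull it back syntactically through the fixed data $(\phi_I/\psi_I;\rho_{s_1},\dots,\rho_{s_r})$ by expanding the matrix entries in a $k$-basis of $S$, and count the bound variables to obtain $n_d$ depending only on $d$ (the paper's count is likewise quadratic in $d$), and your side remark that the hypothesis $\langle I{\rm Mod}\mbox{-}R\rangle={\rm Mod}\mbox{-}S$ is not used here also matches the paper, whose proof never invokes it. Only make sure the written-out $\sigma$ (and $\tau$) includes the conjunct $\phi_I(\overline{x}')$ on the free block, as in your abstract description via $I_0$ and in the paper's explicit formula, so that elements of $\sigma(M)$ genuinely represent elements of $IM$.
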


\begin{definition}
We say that a pp-pair $\phi/\psi$ is {\bf open} on a module $M$ or that $M$ {\bf opens} $\phi/\psi$ is there is an element in $\phi(M)\backslash\psi(M)$. We say that a pp-pair $\phi/\psi$ is {\bf closed} on a module $M$ if $\phi(M)=\psi(M)$.
\end{definition}

In the proof of \ref{functorbd} we take a pp-pair $\gamma/\delta$ over $S$ and pull it back using the interpretation functor $I:{\rm Mod}\mbox{-}R\rightarrow{\rm Mod}\mbox{-}S$ to a pp-pair $\sigma/\tau$ over $R$ so that if $M\in {\rm Mod}\mbox{-}R$ opens $\sigma/\tau$ then $IM$ opens $\gamma/\delta$. We do this in such a way that $c(\sigma)$ is only dependent on $c(\gamma)$ and $d(\gamma)$.  Each variable will be pulled back to an $m$-tuple of variables, indicated in the proof by overlines.
\begin{proof}[of \ref{functorbd}]
Let $s_1=1,\dots ,s_p$ be a $k$-basis for $S$. Suppose that the interpretation functor $I:{\rm Mod}\mbox{-}R\rightarrow{\rm Mod}\mbox{-}S$ is given by the data $(\phi/\psi;\rho_{s_1},\dots ,\rho_{s_p})$ where $\phi/\psi$ is a pp-$m$-pair.

Let $N\in{\rm ind}_d\mbox{-}S$. We know from \ref{isolatingbound} that there exists a pp-$1$-pair $\gamma/\delta$ isolating $N$ with $\gamma$ of the form
\[\exists y_1,\dots ,y_d \bigwedge_{i=1}^{e}xb_i+\sum_{j=1}^dy_ja_{ij}=0\] with $e\leq dp+1$.

For any $N'\in{\rm Mod}\mbox{-}S$, $N|N'$ is and only if $\gamma(N')\supsetneq \delta(N')$. For $1\leq i\leq e$ and $1\leq k \leq p$, let $\beta_{i}^k\in k$ be such that $b_i=\sum_{k=1}^ps_k\beta_{i}^k$. For $1\leq i\leq e$, $1\leq j\leq d$ and $1\leq k\leq p$, let $\alpha_{ij}^k\in k$ be such that $a_{ij}=\sum_{k=1}^ps_k\alpha_{ij}^k$.

Let $\sigma(\overline{X})$ be the pp-$m$-formula over $R$ given by
\[\exists_{1\leq j\leq d} \overline{Y}_j\exists_{1\leq k\leq p}\overline{Z}_k\exists_{1\leq i\leq e}\overline{W}_i\exists_{\substack{1\leq j\leq d \\ 1\leq k\leq p} }\overline{U}_{jk} \ \phi(\overline{X})\bigwedge_{k=1}^p\phi(\overline{Z}_k)\bigwedge_{1\leq j\leq d}\phi(\overline{Y}_{j})\bigwedge_{\substack{1\leq j\leq d \\ 1\leq k\leq p}}\phi(\overline{U}_{jk})\]
\[\bigwedge_{k=1}^p\rho_{s_k}(\overline{X},\overline{Z}_k)\bigwedge_{\substack{1\leq j\leq d \\ 1\leq k\leq p}}\rho_{s_k}(\overline{Y}_{j},\overline{U}_{jk})\bigwedge_{i=1}^e\sum_{k=1}^p\overline{Z}_k\beta_i^k+\sum_{j=1}^d\sum_{k=1}^p\overline{U}_{jk}\alpha_{ij}^k=\overline{W}_i\bigwedge_{i=1}^e\psi(\overline{W}_i)\]

Now for all $M\in{\rm Mod}\mbox{-}R$, an $m$-tuple $\overline{a}$ of elements from $M$ is in $\sigma(M)$ if and only if $\overline{a}+\psi(M) \in \gamma(IM)$.

In exactly the same way, we may find a pp-$m$-formula $\tau$ over $R$ such that $\tau\rightarrow\sigma$ and for all $M\in{\rm Mod}\mbox{-}R$, an $m$-tuple $\overline{a}$ of elements from $M$ is in $\tau(M)$ if and only if $\overline{a}+\psi(M) \in \delta(IM)$.

Thus $IM$ opens $\gamma/\delta$ if and only if $M$ opens $\sigma/\tau$.

Each of the tuples $\overline{Y}_j,\overline{Z}_k,\overline{W}_i$ and $\overline{U}_{jk}$ is of length $m$. Thus the pp-$m$-formula $\sigma(\overline{X})$ has
\[m(d+p+e+dp)+(1+p+d+dp)c(\phi)+\sum_{k=1}^pc(\rho_{s_k})+\sum_{\substack{1\leq j\leq d \\ 1\leq k\leq p}}c(\rho_{s_k})+ec(\psi)\] bound variables. Set
\[n_d:=m(d+p+(dp+1)+dp)+(1+p+d+dp)c(\phi)+(d+1)\sum_{k=1}^pc(\rho_{s_k})+(dp+1)c(\psi).\] Now since $e\leq dp+1$ we have
\[n_d\geq m(d+p+e+dp)+(1+p+d+dp)c(\phi)+\sum_{k=1}^pc(\rho_{s_k})+\sum_{\substack{1\leq j\leq d \\ 1\leq k\leq p}}c(\rho_{s_k})+ec(\psi)\] and $n_d$ is only dependent on $d$.

\end{proof}

Suppose $\theta(\overline{x},\overline{y})$ is a quantifier-free pp formula over $R$ (i.e. a conjunction of $R$-linear equations), that $\overline{x}$ is an $m$-tuple of variables and $\overline{y}$ is an $n$-tuple of variables. Let $\sigma(\overline{x}):=\exists \overline{y} \theta(\overline{x},\overline{y})$. If $\sigma/\tau$ is a pp-pair and $M$ opens $\sigma/\tau$, say $\overline{a}\in \sigma(M) \setminus \tau(M)$ then choose a tuple $\overline{b}$, of elements from $M$, such that $\theta(\overline{a},\overline{b})$ holds in $M$ and let $M'$ be the $R$-submodule of $M$ generated by the entries of $\overline{a}$ and $\overline{b}$. Then clearly $M'$ opens $\sigma/\tau$ and $\dim M'\leq (m+n)\dim R$.

\begin{cor}\label{boundingdimneededtohitfds}
Suppose $I:{\rm Mod}\mbox{-}R\rightarrow{\rm Mod}\mbox{-}S$ is an interpretation functor such that $\langle I{\rm Mod}\mbox{-}R\rangle ={\rm Mod}\mbox{-}S$. For each $d\in\mathbb{N}$ there exists $b_d\in\mathbb{N}$ such that for every $M\in{\rm ind}_d\mbox{-}S$ with $\dim M\leq d$ there is $N\in{\rm mod}\mbox{-}R$ with $\dim N\leq b_d$ such that $M$ is a direct summand of $IN$.
\end{cor}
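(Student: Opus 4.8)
The plan is to feed the bound coming from \ref{functorbd} into the dimension-reduction observation recorded in the paragraph just before the statement. Fix $d\in\mathbb{N}$, let $n_d$ be as produced by \ref{functorbd}, let $m$ be the number of free variables of the pp-pair $\phi/\psi$ giving the object part of $I$, and set $b_d:=(m+n_d)\dim_k R$. (If one prefers to work with the weaker hypothesis $\dim M\le d$ instead of $M\in{\rm ind}_d\mbox{-}S$, replace $n_d$ by $\max_{d'\le d}n_{d'}$; nothing else changes.)

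First I would take $M\in{\rm ind}_d\mbox{-}S$ and apply \ref{functorbd} to get a pp-$m$-pair $\sigma/\tau$ over $R$ with $c(\sigma)\le n_d$ such that, for every $L\in{\rm Mod}\mbox{-}R$, one has $\sigma(L)/\tau(L)\neq 0$ if and only if $M|IL$. Since ${\rm Mod}\mbox{-}R=\langle{\rm mod}\mbox{-}R\rangle$, \ref{fdinimfd} (or directly the argument preceding it, using a coherent functor isolating $M$ together with $\langle I{\rm Mod}\mbox{-}R\rangle={\rm Mod}\mbox{-}S$) provides at least one module $L$ with $M|IL$, so $\sigma/\tau$ is opened by some $R$-module. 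Now $\sigma$, being a pp formula, has the shape $\exists\overline{y}\,\theta(\overline{x},\overline{y})$ with $\theta$ a conjunction of $R$-linear equations, $\overline{x}$ an $m$-tuple and $\overline{y}$ a $c(\sigma)$-tuple of variables. Choose $\overline{a}\in\sigma(L)\setminus\tau(L)$ and a witness $\overline{b}$ from $L$ with $\theta(\overline{a},\overline{b})$ holding, and let $N$ be the $R$-submodule of $L$ generated by the entries of $\overline{a}$ and $\overline{b}$. Then $\overline{a}\in\sigma(N)$ while $\overline{a}\notin\tau(N)$ --- otherwise, pushing forward along the inclusion $N\hookrightarrow L$, which preserves pp formulas, would put $\overline{a}$ in $\tau(L)$ --- so $N$ opens $\sigma/\tau$ and hence $M|IN$. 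Finally $N$ is generated by at most $m+c(\sigma)\le m+n_d$ elements over the $(\dim_k R)$-dimensional algebra $R$, so $N\in{\rm mod}\mbox{-}R$ and $\dim_k N\le (m+n_d)\dim_k R=b_d$; this last step is exactly the dimension-reduction observation stated just before the corollary.

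There is no genuine obstacle left: all the real content sits in \ref{functorbd}, whose whole point is that the existential complexity $c(\sigma)$ of the pulled-back pair is bounded in terms of $d$ alone (and in particular does not grow with the size of a presentation of $M$, nor with the module $L$ exhibiting $M$ as a summand of $IL$). The only thing one has to be careful about is that the final bound on $\dim_k N$ be independent of the possibly enormous, possibly infinite-dimensional witness $L$; this is automatic, because $N$ is cut down to the submodule generated by the $\le m+c(\sigma)$ entries of the tuples realising that $L$ opens $\sigma/\tau$, where $m$ is a constant attached to $I$ and $c(\sigma)\le n_d$.
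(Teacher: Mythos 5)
Your proposal is correct and follows essentially the same route as the paper's proof: apply \ref{functorbd} to get the pair $\sigma/\tau$ with $c(\sigma)\leq n_d$, then cut down an opening module to the submodule generated by the witnessing tuples, with the same bound $b_d=(m+n_d)\dim_k R$. Your extra care in noting why some $L$ opens $\sigma/\tau$ (via \ref{fdinimfd} and $\langle I{\rm Mod}\mbox{-}R\rangle={\rm Mod}\mbox{-}S$) and why $\overline{a}\notin\tau(N)$ is a sound filling-in of details the paper leaves implicit.
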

\begin{proof}
Suppose $I:{\rm Mod}\mbox{-}R\rightarrow{\rm Mod}\mbox{-}S$ is an interpretation functor given by $(\phi/\psi;\rho_{s_1},\dots ,\rho_{s_p})$ where $\phi/\psi$ is a pp-$m$-pair. Let $b_d=(n_d+m)\dim R$ where $n_d$ is chosen as in \ref{functorbd}. Let $M\in{\rm ind}_d\mbox{-}S$. By that result there is a pp-$m$-pair $\sigma/\tau$ with $c(\sigma)\leq n_d$ such that for all $L\in {\rm Mod}\mbox{-}R$, $L$ opens $\sigma/\tau$ if and only if $M$ is a direct summand of $IL$.

Suppose $L\in {\rm Mod}\mbox{-}R$ opens $\sigma/\tau$. Since $\sigma$ is a pp-$m$-formula with $c(\sigma)\leq n_d$ there is, by the argument directly preceding this corollary, a submodule $N$ of $L$ opening $\sigma/\tau$ of dimension less that or equal to $b_d$. Thus there is an $R$-module $N$ with $\dim N\leq b_d$ such that $M$ is a direct summand of $IN$.
\end{proof}

In \cite{KraSpec}, a noetherian algebra is said to be {\bf endofinitely tame} if for each $n\in\mathbb{N}$ the Ziegler closure of the set of indecomposable finitely presented modules of endolength $\leq n$ contains only finitely many non-finitely presented points. A finite-dimensional algebra over an algebraically closed field is tame if and only if it is endofinitely tame \cite[Theorem 10.13]{KraSpec}.

\begin{theorem}\label{krausetransendtame}\cite[Corollary 10.8]{KraSpec}
A noetherian algebra $S$ is endofinitely tame if and only if
for every $n \in \mathbb{N}$ there is an endofinitely tame noetherian algebra $R$, some $m\in \mathbb{N}$,
and an interpretation functor $F: {\rm Mod}\mbox{-}R\rightarrow {\rm Mod}\mbox{-}S$ such that all but finitely many modules
in ${\rm ind}_n\mbox{-}S$ occur as a direct summand of some finitely presented module $FM$ with
$M \in \bigcup_{i=1}^m {\rm ind}_i\mbox{-}R$.
\end{theorem}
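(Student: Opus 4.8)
The direction $(\Rightarrow)$ is immediate:  if $S$ is endofinitely tame, take $R=S$, let $F$ be the identity functor of ${\rm Mod}\mbox{-}S$ (an interpretation functor, since it commutes with direct products and direct limits), and take $m=n$; then every module in ${\rm ind}_n\mbox{-}S$ is itself of the required form $FM$ with $M\in\bigcup_{i=1}^n{\rm ind}_i\mbox{-}S$.  So the content lies entirely in $(\Leftarrow)$, and there the plan is to verify the definition of endofinite tameness for $S$ directly:  for each $n$, the Ziegler closure $\overline{X_n}$ of the set $X_n$ of indecomposable finitely presented $S$-modules of endolength $\leq n$ must contain only finitely many non-finitely-presented points.

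I would use four standard facts.  (i) For each $\ell$ the class of modules of endolength $\leq\ell$ is a definable subcategory (see \cite{PreNBK}); hence the points of $\overline{X_n}$ are all endofinite, and a direct product of modules of endolength $\leq\ell$ again has endolength $\leq\ell$.  (ii) An interpretation functor $F:{\rm Mod}\mbox{-}R\rightarrow{\rm Mod}\mbox{-}S$ whose object part is a pp-pair $\phi/\psi$ in $m$ free variables does not increase endolength by more than a factor of $m$:  the pp-definable subgroups of $FN=\phi(N)/\psi(N)$ form the interval $[\psi(N),\phi(N)]$ in the lattice of pp-$m$-definable subgroups of $N$, which has length at most $m\cdot{\rm endolength}(N)$; in particular $F$ carries endofinite modules to endofinite modules.  (iii) $F$ commutes with direct products (part of the definition of an interpretation functor).  (iv) The preimage under an interpretation functor of a definable subcategory is again definable \cite[13.3]{PreMAMS}, so $\langle F{\cal X}\rangle=\langle F\langle{\cal X}\rangle\rangle$ for any class ${\cal X}$ of modules.

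Now fix $n$.  Applying the hypothesis to each $j\leq n$ and then passing to the product algebra and the direct sum of the resulting functors (both operations preserve noetherianness, endofinite tameness, and the property of being an interpretation functor), I obtain a single endofinitely tame noetherian algebra $R$, an $m\in{\mathbb N}$, and an interpretation functor $F:{\rm Mod}\mbox{-}R\rightarrow{\rm Mod}\mbox{-}S$ such that all but finitely many modules in $X_n$ are direct summands of finitely presented modules $FM$ with $M\in{\cal M}:=\bigcup_{i\leq m}{\rm ind}_i\mbox{-}R$.  Put ${\cal D}=\langle{\cal M}\rangle$:  since $R$ is endofinitely tame, ${\cal D}\cap{\rm Zg}_R$ consists of ${\cal M}$ together with finitely many non-finitely-presented, necessarily endofinite, points $G_1,\dots,G_r$.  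By (iv), $\langle F{\cal D}\rangle$ is the definable subcategory generated by the finitely presented modules $\{FM:M\in{\cal M}\}$ together with the finitely many endofinite modules $FG_1,\dots,FG_r$; and by (i) each $FG_l$ has indecomposable summands of only finitely many isomorphism types.  Let $N$ be a non-finitely-presented point of $\overline{X_n}$.  Since only finitely many modules of $X_n$ fail to be summands of the $FM$'s, $N$ lies in the Ziegler closure of those $FM$'s up to a finite set of exceptions, hence in ${\rm Zg}_S\cap\langle F{\cal D}\rangle$.  It then remains to show that $N$, being non-finitely-presented, is a direct summand of one of $FG_1,\dots,FG_r$; granting this, there are only finitely many possibilities for $N$, and since $n$ was arbitrary, $S$ is endofinitely tame.

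The main obstacle is precisely this last step:  a non-finitely-presented indecomposable pure-injective $S$-module in $\langle F{\cal D}\rangle$ must be a summand of one of the endofinite modules $FG_l$, rather than arise merely as a Ziegler limit of the finitely presented modules $FM$ ($M\in{\cal M}$).  Equivalently, the $FM$ must accumulate in ${\rm Zg}_S$ only at points already supplied by $FG_1,\dots,FG_r$ --- a continuity statement for $F$ --- and since interpretation functors need not induce point maps between Ziegler spectra, I expect this has to be argued through the exact functor $F_0:{\rm fun}(\langle F{\cal D}\rangle)\rightarrow{\rm fun}({\cal D})$, which is faithful by the argument of \ref{allemb}, together with the map it induces on the Gabriel--Ziegler spectra, using that ${\cal D}\cap{\rm Zg}_R$ is topologically a set of isolated points together with the $r$ limit points $G_1,\dots,G_r$.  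A cleaner route would be to isolate first the auxiliary statement that an interpretation functor out of a definable subcategory of endofinite $R$-modules whose Ziegler spectrum has only finitely many non-finitely-presented points has image generating a definable subcategory of endofinite $S$-modules with the same property; this is in substance Krause's analysis of endofinite modules in \cite{KraSpec}, and it reduces the theorem to routine bookkeeping with products and direct limits of endofinite modules.
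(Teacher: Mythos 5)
The paper does not prove this statement at all: it is quoted verbatim as Krause's Corollary 10.8 of \cite{KraSpec}, so your proposal has to stand on its own, and it does not. The forward direction is fine, but in the converse direction you stop exactly at the decisive point. Having reduced to showing that a non-finitely-presented point $N$ in the Ziegler closure of the modules $FM$ ($M\in{\cal M}$) must be a direct summand of one of the finitely many modules $FG_1,\dots,FG_r$, you write that you ``expect this has to be argued through'' the induced exact functor, and that the needed auxiliary statement ``is in substance Krause's analysis of endofinite modules in \cite{KraSpec}''. That is precisely the content of the theorem being proved, so deferring it to \cite{KraSpec} makes the argument circular rather than a proof. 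And the worry you raise is real: an interpretation functor need not induce a map, let alone a continuous one, on Ziegler spectra, and nothing you have written rules out the $FM$ accumulating at a non-finitely-presented point of $\langle F{\cal D}\rangle$ which is not a summand of any $FG_l$; some genuine input about endofinite points (e.g.\ that the relevant closure consists of endofinite points of bounded endolength and that such points are controlled by the finitely many infinite points of ${\cal D}$) has to be proved, not asserted.

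There is also a factual error in your fact (i): the class of modules of endolength $\leq\ell$ is \emph{not} a definable subcategory, because it is not even closed under finite direct sums -- over $k[x]/(x^2)$ the modules $k$ and $k[x]/(x^2)$ have endolength $1$ and $2$, while their direct sum has endolength $3$; the same example shows products do not preserve the bound. What is true, and what your argument actually needs, is the weaker statement that the set of \emph{indecomposable pure-injective points} of endolength $\leq\ell$ is closed in ${\rm Zg}_S$, so that every point of $\overline{X_n}$ is endofinite of endolength $\leq n$. With that correction your bookkeeping with products of algebras and sums of functors is reasonable, and the statement that ${\cal D}\cap{\rm Zg}_R$ consists of ${\cal M}$ together with finitely many non-finitely-presented endofinite points should be stated more carefully (the closure may contain finitely presented points outside ${\cal M}$), but these are secondary: the missing step identified above is the heart of Krause's result and is absent from the proposal.
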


\begin{theorem}\label{onlywildintwild}
Let $R,S$ be finite-dimensional algebras over an algebraically closed field. Suppose that $I:{\rm Mod}\mbox{-}R\rightarrow {\rm Mod}\mbox{-}S$ is an interpretation functor such that $\langle {\rm Mod}\mbox{-}R\rangle={\rm Mod}\mbox{-}S$. If $S$ is wild then $R$ is wild.
\end{theorem}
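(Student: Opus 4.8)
The plan is to prove the contrapositive: assuming $R$ is \emph{not} wild, I will show that $S$ is not wild either. Since $R$ is finite-dimensional over an algebraically closed field, the tame--wild dichotomy forces $R$ to be tame, and then \cite[Theorem 10.13]{KraSpec} tells us that $R$ is endofinitely tame; being finite-dimensional, $R$ is in particular noetherian, so it is eligible to play the role of the auxiliary algebra in Krause's transfer criterion \ref{krausetransendtame}.

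The next step is to check that, for the given interpretation functor $I$, the hypotheses of (the ``if'' direction of) \ref{krausetransendtame} are met, with $R$ as the endofinitely tame noetherian algebra. Fix $n\in\mathbb{N}$. Corollary \ref{boundingdimneededtohitfds} applied to $I$ --- here the hypothesis that the definable subcategory generated by the image of $I$ is all of ${\rm Mod}\mbox{-}S$ is exactly what that corollary requires --- produces $b_n\in\mathbb{N}$ such that every $M\in{\rm ind}_n\mbox{-}S$ is a direct summand of $IN$ for some $N\in{\rm mod}\mbox{-}R$ with $\dim N\leq b_n$. Writing $N$ as a finite direct sum of indecomposables and using that $I$ is additive, that each summand $IN_j$ is finite-dimensional over $S$ (as noted just before \ref{fdinimfd}) hence finitely presented, and that $M$, being a finite-dimensional indecomposable, has local endomorphism ring, the Krull--Schmidt property lets me conclude that $M$ is already a direct summand of $IN'$ for some indecomposable $N'\in\bigcup_{i=1}^{b_n}{\rm ind}_i\mbox{-}R$. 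Thus \emph{every} module in ${\rm ind}_n\mbox{-}S$ --- in particular all but finitely many --- occurs as a summand of a finitely presented module $IM$ with $M\in\bigcup_{i=1}^{b_n}{\rm ind}_i\mbox{-}R$, which is precisely the criterion of \ref{krausetransendtame} with $m=b_n$.

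Since $n$ was arbitrary, \ref{krausetransendtame} yields that $S$ is endofinitely tame, and \cite[Theorem 10.13]{KraSpec} then gives that $S$ is tame, hence not wild --- which is the contrapositive of the claim. The substantive content of the argument has already been packaged into \ref{functorbd} and \ref{boundingdimneededtohitfds} (the uniform bound on the complexity of the pulled-back isolating pp-pairs, and hence on the dimension of the preimages needed to hit the indecomposables of $S$ of a given dimension) and into Krause's theorem; the only remaining point requiring care is the elementary Krull--Schmidt reduction from ``$M\mid IN$ with $\dim N\leq b_n$'' to ``$M$ a direct summand of $IN'$ with $N'$ indecomposable of dimension $\leq b_n$'', together with the observation that it is the tame--wild dichotomy that converts the hypothesis ``$R$ not wild'' into the ``$R$ endofinitely tame'' input demanded by the transfer result.
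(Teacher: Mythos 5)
Your proof is correct and follows essentially the same route as the paper: apply Corollary \ref{boundingdimneededtohitfds} to bound the dimension of the $R$-modules needed to hit ${\rm ind}_n\mbox{-}S$, then invoke Krause's transfer criterion \ref{krausetransendtame} together with the equivalence of tame and endofinitely tame for finite-dimensional algebras over an algebraically closed field. The only difference is that you spell out the Krull--Schmidt reduction from a bounded-dimension $N$ to an indecomposable $N'$ (and the use of the tame--wild dichotomy), steps the paper's proof leaves implicit; both are correct.
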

\begin{proof}
Suppose $R$ is tame and $I:{\rm Mod}\mbox{-}R\rightarrow {\rm Mod}\mbox{-}S$ is an interpretation functor such that $\langle {\rm Mod}\mbox{-}R\rangle={\rm Mod}\mbox{-}S$. Since $R$ is tame, $R$ is endofinitely tame. By \ref{boundingdimneededtohitfds}, for each $n\in\mathbb{N}$ there is a $b_n$ such that if $L\in{\rm ind}_n\mbox{-}S$ then there is an $M\in \bigcup_{i=1}^{b_n} {\rm ind}_i\mbox{-}R$ such that $L$ is a direct summand of $FM$. Thus by \ref{krausetransendtame}, $S$ is endofinitely tame. Hence $S$ is tame.
\end{proof}

\section{Domestic does not interpret non-domestic}

In this section we show that if $R,S$ are finite-dimensional $k$-algebras with $k$ an algebraically closed field and $I:{\rm Mod}\mbox{-}R\rightarrow{\rm Mod}\mbox{-}S$ is an interpretation functor such that $\langle I{\rm Mod}\mbox{-}R\rangle={\rm Mod}\mbox{-}S$ then, if $R$ is tame domestic, so also is $S$.  We allow ``finite type" to be included in ``tame" since, if $R$ is of finite representation type, so is $S$, for example by the following general result.

\begin{proposition} (\cite[15.1]{PreMAMS})  If $I:{\cal C} \rightarrow {\cal D}$ is an interpretation functor then for every indecomposable pure-injective $M$ in $\langle I{\cal C}\rangle$ there is an indecomposable pure-injective $N\in {\cal D}$ such that $M$ is a direct summand of $IN$.
\end{proposition}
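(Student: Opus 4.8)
The plan is to produce, for the given indecomposable pure-injective $M\in\langle I{\cal C}\rangle$, first some pure-injective $N_0\in{\cal C}$ with $M$ a direct summand of $IN_0$, and then to replace $N_0$ by an indecomposable one. For the first step there is no harm in replacing ${\cal D}$ by $\langle I{\cal C}\rangle$ and corestricting $I$, so that $\langle I{\cal C}\rangle={\cal D}$ and, by \ref{allemb}, $I_0$ is faithful. Since $I$ commutes with direct products and direct limits and ${\cal C}$ is closed under both, the image of $I$ is closed under direct products and direct limits, so by \cite[3.8]{PreAbex} $\langle I{\cal C}\rangle$ is the closure of the image of $I$ under pure submodules; in particular $M$ is a pure submodule of $IC$ for some $C\in{\cal C}$, hence (being pure-injective) a direct summand of $IC$. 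Let $N_0$ be the pure-injective hull of $C$; it again lies in ${\cal C}$. As $I$ commutes with direct limits it preserves pure monomorphisms (a pure-exact sequence is a direct limit of split ones), so $IC\hookrightarrow IN_0$ is pure, and composing with the split monomorphism $M\hookrightarrow IC$ gives a pure monomorphism $M\hookrightarrow IN_0$, which splits since $M$ is pure-injective. Thus $M$ is a direct summand of the pure-injective module $IN_0$, with $N_0\in{\cal C}$. (Alternatively, one runs this step in the functor categories, which is the route of \cite[Chpts.~13, 15]{PreMAMS}: the exact colimit-preserving functor $\overrightarrow{I_0}:{\rm Fun}({\cal D})\to{\rm Fun}({\cal C})$ is faithful, hence has a right adjoint which, under the correspondence between pure-injective modules and injective functors, corresponds to $I$ acting on pure-injectives; faithfulness makes the unit of the adjunction at the indecomposable injective $E_M$ attached to $M$ a split monomorphism, and pushing $\overrightarrow{I_0}E_M$ into an injective hull and applying the right adjoint once more again yields $M|IN_0$ for a pure-injective $N_0$.)

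The real work is the second step. Because $I$ commutes with products, ultraproducts and direct limits, preserves pure monomorphisms, and ${\cal C}$ is generated as a definable category by its indecomposable pure-injectives, the definable subcategory $\langle I{\cal C}\rangle$ is also generated by $\{IN:N\in{\rm Zg}({\cal C})\}$; so it suffices to show that every indecomposable pure-injective $M\in\langle\{IN:N\in{\rm Zg}({\cal C})\}\rangle$ is a direct summand of $IN$ for some $N\in{\rm Zg}({\cal C})$. Applying the first step, $M|IN_0$ for some pure-injective $N_0\in{\cal C}$; decomposing $N_0=H(\bigoplus_\lambda N_\lambda)\oplus N'$ --- $H(-)$ the pure-injective hull, $N_\lambda\in{\rm Zg}({\cal C})$, $N'$ superdecomposable --- and using that $M$ has a local endomorphism ring, one reduces, treating the superdecomposable summand the same way, to the case $M|\prod_\lambda IN_\lambda=I(\prod_\lambda N_\lambda)$ with $N_\lambda\in{\rm Zg}({\cal C})$. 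I expect this last reduction to be the main obstacle: an indecomposable pure-injective direct summand of a direct product of modules need not be a direct summand of any one factor (the $p$-adic integers are a direct summand of $\prod_n\mathbb{Z}/p^n\mathbb{Z}$ but of no single $\mathbb{Z}/p^n\mathbb{Z}$), so one cannot simply read off $M|IN_\lambda$ for some $\lambda$. Instead one has to realise the required indecomposable pure-injective of ${\cal C}$ as a suitable Ziegler limit of the $N_\lambda$, exploiting that ${\cal C}$ is definable so that ${\rm Zg}({\cal C})$ is closed; the relevant limit is controlled by pulling the (irreducible) pp-type that determines $M$ back along $I$, exactly as in the proof of \ref{functorbd}. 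When $M$ is isolated this is immediate: an isolating coherent functor $F$ for $M$ gives the nonzero coherent functor $I_0F$ on ${\cal C}$, which must be nonzero on some $N\in{\rm Zg}({\cal C})$, and then $M|IN$ --- this is essentially the argument of \ref{fdinimfd}. For general indecomposable pure-injective $M$ the limiting argument needs the full decomposition theory of pure-injective modules and is the technical heart of \cite[15.1]{PreMAMS}.
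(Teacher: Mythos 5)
The paper itself gives no proof of this proposition --- it is quoted from \cite[15.1]{PreMAMS} --- so the comparison is with that source rather than with an argument in the text. Your first step is essentially fine: granting \cite[3.8]{PreAbex} (note that your claim that the image of $I$ is closed under direct limits glosses over the fact that the connecting maps of a directed system of image objects need not come from ${\cal C}$ --- but the conclusion you need is exactly the cited statement), $M$ is a pure submodule, hence a direct summand, of some $IC$, and since interpretation functors preserve pure monomorphisms and pure-injectivity, $M$ is a direct summand of $IN_0$ for the pure-injective $N_0=H(C)\in{\cal C}$ (the statement's ``$N\in{\cal D}$'' is indeed a typo for $N\in{\cal C}$, as you implicitly assume). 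Your treatment of the isolated case, pulling an isolating coherent functor back along $I$, is also correct and is exactly the mechanism used in \ref{fdinimfd}.

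But the proposition asks for an \emph{indecomposable} pure-injective $N$, and that is precisely where your argument stops. The proposed reduction is not valid as written: decomposing $N_0=H(\bigoplus_\lambda N_\lambda)\oplus N'$ with $N'$ superdecomposable, the superdecomposable summand cannot be ``treated the same way'' --- it has no indecomposable direct summands at all, so there is nothing to reduce to --- and, as your own $p$-adic example shows, $M\mid I(\prod_\lambda N_\lambda)=\prod_\lambda IN_\lambda$ does not give $M\mid IN_\lambda$ for any single $\lambda$. The appeal to realising the required point of ${\rm Zg}({\cal C})$ as a ``suitable Ziegler limit \dots exactly as in the proof of \ref{functorbd}'' is a hope rather than an argument: \ref{functorbd} concerns finite-dimensional points, which admit isolating pp-pairs, whereas for a general indecomposable pure-injective $M$ there is no coherent functor $F$ with $FL\neq 0\Rightarrow M\mid L$. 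Your functor-category aside has the same shortfall, as you note yourself: the injective hull of $\overrightarrow{I_0}E_M$ in ${\rm Fun}({\cal C})$ need not be indecomposable, so one again obtains only a possibly decomposable pure-injective $N_0$ with $M\mid IN_0$. Thus the technical heart of \cite[15.1]{PreMAMS} --- passing from a summand of $IN_0$ for some pure-injective $N_0$ to a summand of $IN$ for some \emph{indecomposable} pure-injective $N$ --- is explicitly acknowledged but not supplied, so the proposal does not prove the proposition.
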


\begin{definition}
The {\bf endolength} $\text{el(M)}$ of a module $M$ is the length of $M$ as a module over $\text{End}(M)$. We say that an indecomposable module is {\bf generic} if it is of finite endolength but not finitely presented.
\end{definition}

\begin{lemma}
Let $R,S$ be finite-dimensional $k$-algebras with $k$ algebraically closed. Let $I:{\rm Mod}\mbox{-}R\rightarrow {\rm Mod}\mbox{-}S$ be an interpretation functor. If $G\in{\rm Mod}\mbox{-}R$ is generic then $IG$ is either zero or finite endolength. If $R$ is tame then $IG$ is not finite-dimensional.
\end{lemma}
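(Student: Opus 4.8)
The plan is to treat the two assertions separately, using throughout that an interpretation functor $I$ is given by a pp-$m$-pair $\phi/\psi$ together with pp-definable actions $\rho_s$ ($s\in S$), so that $IM=\phi(M)/\psi(M)$ as an abelian group for every $R$-module $M$. For the first assertion I would invoke the standard characterisation of finite endolength: a module $N$ has finite endolength if and only if its lattice of pp-definable subgroups (in one free variable, equivalently in all finitely many variables) has finite length, and then that length is exactly $\text{el}(N)$ (see \cite[\S 4]{PreNBK}). Given a pp-$1$-formula $\gamma$ over $S$, pull it back through $I$, as in the proof of \ref{functorbd}, to a pp-$m$-formula $\sigma_\gamma$ over $R$ with $\sigma_\gamma(M)/\psi(M)=\gamma(IM)$ for all $M$; by \ref{intfunctorslatthom} the assignment $\gamma\mapsto\sigma_\gamma$ is a lattice homomorphism from ${\rm pp}^1_S$ into the interval $[\phi,\psi]$ (between $\psi$ and $\phi$) in ${\rm pp}^m_R$. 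Evaluating at $G$ and using $\psi(G)\le\sigma_\gamma(G)\le\phi(G)$, the rule $\gamma(IG)\mapsto\sigma_\gamma(G)$ is a well-defined lattice isomorphism from the lattice of pp-definable subgroups of $IG$ onto a sublattice of the lattice of pp-definable subgroups of $G^m$. Since $G$ is generic, $G^m$ has finite endolength $m\cdot\text{el}(G)$, so that ambient lattice, hence the one for $IG$, has finite length; thus $IG$ is zero or of finite endolength (indeed $\text{el}(IG)\le m\cdot\text{el}(G)$).

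For the second assertion we may assume $IG\neq 0$, since otherwise there is nothing to prove. Here I would import the structure theory of generic modules over tame algebras (Crawley-Boevey; see also \cite{KraSpec}): since $R$ is a tame finite-dimensional algebra over the algebraically closed field $k$ and $G$ is generic, there is a localisation $V=k[x]_f$ of the polynomial ring --- a principal ideal domain with fraction field $K=k(x)$ --- and a $V$-$R$-bimodule $M$, finitely generated over $V$, such that $G\cong K\otimes_V M$ as right $R$-modules. Note that $K$ is infinite-dimensional over $k$.

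The heart of the argument is that $I$ is compatible with this presentation. The left $V$-action on $M$ is by $R$-module endomorphisms, so $I$ transports it to a left $V$-action on $IM=\phi(M)/\psi(M)$; moreover $\phi(M)$ and $\psi(M)$ are $V$-submodules of $M^m$, since pp-definable subgroups are preserved by $R$-endomorphisms and in particular by multiplication by elements of $V$, and they are finitely generated as submodules of the finitely generated module $M^m$ over the noetherian ring $V$. Hence $IM$ is a finitely generated $V$-module, so $IM\cong V^r\oplus T$ with $T$ torsion. Now $K$ is the directed union of the free rank-one $V$-modules $g^{-1}V$, so $K\otimes_V M$ is a direct limit in ${\rm Mod}\mbox{-}R$ of copies of $M$ along multiplication maps; since $I$ commutes with direct limits, $IG=I(K\otimes_V M)\cong K\otimes_V IM\cong K^r$ as $k$-vector spaces (the torsion summand dies on tensoring with $K$). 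As $IG\neq 0$ we get $r\geq 1$, so $IG$ contains a copy of $K$ and is therefore infinite-dimensional over $k$; in particular $IG$ is not a non-zero finite-dimensional $S$-module. Combined with the first assertion, a non-zero $IG$ is thus a finite direct sum of generic $S$-modules.

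I expect the main obstacle to be the non-exactness of interpretation functors: one cannot base-change the bimodule presentation of $G$ along $V\to K$ using flatness, so the argument has to be routed through direct limits, exploiting that $I$ (equivalently, each defining pp-formula) commutes with filtered colimits --- this is what forces the slightly awkward description of $K$ as a directed union of rank-one free $V$-modules. The other load-bearing input is the bimodule realisation of generic modules over tame algebras, and it is precisely there that the hypothesis that $R$ is \emph{tame}, rather than merely that $G$ has finite endolength, is used.
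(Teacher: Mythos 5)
Your proof is correct, but it takes a genuinely different (and heavier) route than the paper's, which is only a few lines. For the first assertion the paper simply quotes the fact that interpretation functors preserve finite endolength (\cite[Cor.~9.7]{KraSpec}); you in effect reprove that fact via the lattice embedding of \ref{intfunctorslatthom} evaluated at $G$, which works and even yields the explicit bound $\text{el}(IG)\le m\cdot\text{el}(G)$ --- one small terminological quibble: $\text{el}(G^m)=\text{el}(G)$ (since $\text{End}(G^m)$ is a matrix ring over $\text{End}(G)$), so what your argument really uses is that the subgroups of $G^m$ pp-definable in $m$ free variables are $\text{End}(G)$-submodules of $G^m$, and $G^m$ has length $m\cdot\text{el}(G)$ over $\text{End}(G)$; the substance is unaffected. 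For the second assertion the paper bypasses the bimodule realisation \ref{CBgenc} altogether: by Crawley-Boevey \cite[4.4]{CBtamegen}, for $R$ tame and $G$ generic, $\text{End}_RG/\text{rad}\,\text{End}_RG\cong k(x)$ and the radical splits off, so $k(x)$ acts on $G$ by $R$-endomorphisms; since $\phi(G)$ and $\psi(G)$ are $\text{End}_R(G)$-submodules of $G^m$, both are $k(x)$-vector spaces and $IG=\phi(G)/\psi(G)$ is a quotient of $k(x)$-spaces, hence zero or infinite-dimensional over $k$ --- no direct limits, noetherianity, or the structure theory of finitely generated modules over the PID $k[x,f^{-1}]$ is needed. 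Your route through $G\cong k(x)\otimes_V B_G$, the transported $V$-structure on $I(B_G)$ and commutation of $I$ with directed colimits is sound and buys a sharper conclusion ($IG\cong k(x)^r$, with $r$ bounded by the $V$-rank of $I(B_G)$), while the paper's argument buys brevity; your closing remark correctly identifies both why exactness is unavailable and why tameness, entering through Crawley-Boevey's description of generics, is the essential input in either approach.
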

\begin{proof}
We know that if $M$ is finite endolength then $IM$ is also finite endolength (see for instance \cite[Cor.~9.7]{KraSpec}).  If $G$ is generic and $R$ is tame then $\text{End}_R G/\text{rad} \text{End}_RG\cong k(x)$ \cite[4.4]{CBtamegen} and the embedding of $\text{rad}\text{End}_R G$ into $\text{End}_R G$ is split. Thus $IG$ is a quotient of two $k(x)$-modules, so is either zero or infinite-dimensional.
\end{proof}

\begin{lemma} (cf. \cite[9.6]{HerzogloccohZg}) Let $R$ be a finite-dimensional $k$-algebra.
Any compact subset of $\text{Zg}_R$ not containing any generic module contains only finitely many modules of each finite dimension over $k$.
\end{lemma}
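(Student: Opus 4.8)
The plan is to argue by contradiction, using quasi-compactness of the subset $X$ to produce a point of $X$ that is forced to be a generic module.

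So suppose $X\subseteq{\rm Zg}_R$ is compact, contains no generic module, yet contains infinitely many pairwise non-isomorphic modules of some fixed dimension $d$; fix a sequence $M_1,M_2,\dots$ of such modules. Each $M_i$, being a finite-dimensional indecomposable over a finite-dimensional algebra, is an isolated point of ${\rm Zg}_R$, and its endolength satisfies $\text{el}(M_i)\le\dim_k M_i=d$ because ${\rm End}(M_i)$ is finite-dimensional over $k$. For each $n$ the set $\overline{\{M_i:i\ge n\}}\cap X$ is a nonempty closed subset of $X$, and these form a decreasing chain; by compactness of $X$ their intersection is nonempty, so I would pick a point $N$ in it. Then $N$ lies in $\overline{\{M_i:i\ge n\}}$ for every $n$, so every Ziegler neighbourhood of $N$ meets $\{M_i:i\ge n\}$ for each $n$; in particular $N\in\overline{\{M_i:i\in\mathbb{N}\}}$. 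Moreover $N$ is not equal to any $M_j$ (otherwise the open set $\{M_j\}$ would meet $\{M_i:i>j\}$, contradicting distinctness), and $N$ is not finite-dimensional at all: a finite-dimensional indecomposable is an isolated point of ${\rm Zg}_R$, so $\{N\}$ would be a neighbourhood of $N$ meeting $\{M_i\}$, again forcing $N=M_j$ for some $j$. Hence $N$ is infinite-dimensional, so in particular not finitely presented.

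To finish I would invoke the fact that, for a finite-dimensional algebra, the locus $\{L\in{\rm Zg}_R:\text{el}(L)\le d\}$ is closed in ${\rm Zg}_R$ --- equivalently, the Ziegler closure of any family of modules of endolength at most $d$ again consists of modules of endolength at most $d$. This is the substance of \cite[9.6]{HerzogloccohZg}, and also follows from the theory of endofinite modules and finite-length functors (compare \cite{KraSpec} and \cite{CBtamegen}). Since each $M_i$ lies in this closed set and $N$ lies in the closure of $\{M_i:i\in\mathbb{N}\}$, it follows that $\text{el}(N)\le d<\infty$. Thus $N$ is an indecomposable pure-injective module of finite endolength which is not finitely presented, i.e.\ $N$ is generic; but $N\in X$, contradicting the hypothesis. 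This contradiction shows that $X$ meets ${\rm ind}_d\mbox{-}R$ in a finite set for every $d$, which is the assertion.

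The quasi-compactness bookkeeping and the observation that finite-dimensional indecomposables are isolated points with endolength bounded by their dimension are routine; the one genuine ingredient --- and the step I expect to be the real obstacle to a fully self-contained proof --- is the closedness of the bounded-endolength locus $\{\text{el}\le d\}$ in ${\rm Zg}_R$. If one prefers not to quote \cite[9.6]{HerzogloccohZg}, this can be established inside the functor category ${\rm Fun}\mbox{-}R$, where $N$ corresponds to an indecomposable injective object and the bound $\text{el}(N)\le d$ becomes a length bound that persists under the relevant limits; carrying out that translation carefully is where the work lies.
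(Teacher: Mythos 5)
Your proof is correct and takes essentially the same route as the paper: both arguments rest on the closedness in ${\rm Zg}_R$ of the set of points of endolength $\leq d$, the fact that finite-dimensional indecomposables are isolated points, and compactness of the given subset. The only difference is cosmetic --- where you extract a limit point of the $M_i$ via the finite intersection property, the paper simply observes that a compact set consisting entirely of isolated points must be finite, so an infinite such intersection would contain a non-isolated point of finite endolength, which is necessarily generic.
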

\begin{proof}
For any topological space, the intersection of a compact subset with a closed subset is compact. Thus if $Y$ is a compact subset of $\text{Zg}_R$ then for any $n\in\mathbb{N}$, the intersection of $Y$ with the closed subset of points of finite endolength $\leq n$ is compact. If it were infinite then it would contain a non-isolated point but any non-isolated point of finite endolength is generic. Thus $Y$ contains only finitely many points of each finite dimension over $k$.
\end{proof}

We now recall some basic information about Ziegler spectra of Dedekind domains all of which is contained in \cite[Section 5.2.1]{PreNBK}.

Let $R$ be a Dedekind domain. The points of $Zg_R$ are as follows:
the field of fractions, $Q$, of $R$; for each prime $\mathfrak{p}$, a Pr\"ufer point $R_{\mathfrak{p}^\infty}$ and an adic point $\overline{R_\mathfrak{p}}$; for each prime $\mathfrak{p}$ and for each positive integer $n$, the finite-length module $R/\mathfrak{p}^n$.  Each point $R/\mathfrak{p}^n$ is clopen and $Q$ belongs to the closure of each infinite-dimensional point.

\begin{remark}\label{dedekindgenericinopen}
Suppose that $R$ is a Dedekind domain and $U$ is an open subset of $\text{Zg}_R$ containing the field of fractions of $R$. Then the complement of $U$ is finite.

This follows since the complement of $U$, being closed, can contain only finite-dimensional, hence isolated, points (for the generic is in the closure of each adic and Pr\"{u}fer point), each adic and Pr\"ufer point is in $U$. But this complement also is compact, so must be finite.
\end{remark}

We will heavily use the following result of Crawley-Boevey.

\begin{theorem}\label{CBgenc}\marginpar{CBgenc}  \cite[p.~2, 5.2, 5.4]{CBtamegen}
If $R$ is a finite-dimensional algebra over an algebraically closed field $k$ and $R$ has tame representation type, then for each generic module $G$ there is $f\in k[x]$ and a $(k[x,f^{-1}],R)$-bimodule $B_G$ such that the following hold.
\begin{enumerate}
\item  As a left $k[x,f^{-1}]$ module, $B_G$ is free of rank equal to the endolength of $G$ and $k(x) \otimes B_G\simeq G$.
\item  The functor $-\otimes B_G$ from $k[x,f^{-1}]$-modules to $R$-modules reflects isomorphism and preserves indecomposability and Auslander-Reiten sequences.
\item For each $d\in\mathbb{N}$,  all but finitely many indecomposable $R$-modules of dimension $d$ have the form $k[x,f^{-1}]/\langle g\rangle\otimes B_G$ for some generic $G$ and some $g \in k[x,f^{-1}]$.
\end{enumerate}
\end{theorem}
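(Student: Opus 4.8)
This is Crawley-Boevey's theorem on generic modules over tame algebras, and a proof essentially reconstructs the relevant portion of \cite{CBtamegen}; here is its shape. The plan is to route everything through the definition of tame representation type, to match generic modules with the one-parameter families occurring there, and then to present the parametrizing bimodules in a reduced, localized form.

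Recall that $R$ being tame means that for each $d\in\mathbb{N}$ there are finitely many $(k[x],R)$-bimodules $M_{d,1},\dots,M_{d,r_d}$, each finitely generated free of rank $d$ as a left $k[x]$-module, such that all but finitely many indecomposable $R$-modules of dimension $d$ have the form $k[x]/(x-\lambda)\otimes_{k[x]}M_{d,i}$ for some $i$ and some $\lambda\in k$; moreover, by a standard reduction, one may take each $M_{d,i}$ to be \emph{reduced}, so that the generic module $k(x)\otimes_{k[x]}M_{d,i}$ has endolength equal to its rank $d$. The genuinely hard input — and the step I expect to be the main obstacle — is the structural fact that for each generic module $G$ the ring $E={\rm End}_R(G)$ is local (semiperfect since $G$ has finite endolength, local since $G$ is indecomposable) with residue division ring $E/{\rm rad}(E)\cong k(x)$, with ${\rm rad}(E)$ splitting off, and that $G$ is then realized as $k(x)\otimes_{k[x]}M$ for one of the reduced parametrizing bimodules $M:=M_{n,i}$ with $n={\rm el}(G)$. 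This is the content of sections 4 and 5 of \cite{CBtamegen}, and it is here that the tameness hypothesis is really used, via a reduction of tameness to a statement about bimodules and a careful control of the generic behaviour of one-parameter families in each fixed endolength.

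Granting that, the construction of $B_G$ is routine. Only finitely many parameter values are bad for $M=M_{n,i}$: let $Z\subseteq k$ be the finite set of $\lambda$ at which $k[x]/(x-\lambda)\otimes M$ fails to be indecomposable of dimension $n$, or at which indecomposability or Auslander-Reiten sequences are not preserved, choose $f\in k[x]$ with zero set exactly $Z$, and put $B_G:=k[x,f^{-1}]\otimes_{k[x]}M$. Then $B_G$ is free of rank $n={\rm el}(G)$ over $k[x,f^{-1}]$ and $k(x)\otimes B_G=k(x)\otimes M\cong G$, which is (1). Having inverted $f$, the exact functor $-\otimes B_G$ carries each finite-length module $k[x,f^{-1}]/(x-\lambda)^{j}$ to an indecomposable $R$-module with the endomorphism ring one expects in a homogeneous tube, so it sends non-isomorphic such modules to non-isomorphic $R$-modules; hence $-\otimes B_G$ reflects isomorphism and preserves indecomposability. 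The indecomposable finite-length $k[x,f^{-1}]$-modules form the homogeneous tubes of the Dedekind domain $k[x,f^{-1}]$, whose Auslander-Reiten sequences are the familiar ones relating $k[x,f^{-1}]/(x-\lambda)^{j-1}$, $k[x,f^{-1}]/(x-\lambda)^{j}$ and $k[x,f^{-1}]/(x-\lambda)^{j+1}$, and these are precisely the sequences $-\otimes B_G$ is asked to preserve, so (2) is checked directly on that family.

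Finally (3) is immediate from the definition of tame. Fix $d$; by tameness all but finitely many indecomposable $R$-modules of dimension $d$ are of the form $k[x]/(x-\lambda)\otimes M_{d,i}$ for one of the reduced bimodules $M_{d,i}$. For each such $i$ put $G_i:=k(x)\otimes M_{d,i}$, a generic module of endolength $d$, and choose $f_i$ as above; then for every $\lambda$ outside the finite set $Z_i$ one has $k[x]/(x-\lambda)\otimes M_{d,i}\cong k[x,f_i^{-1}]/\langle x-\lambda\rangle\otimes B_{G_i}$. Discarding the finitely many modules with $\lambda\in\bigcup_i Z_i$ gives (3); the statement allows an arbitrary $g\in k[x,f^{-1}]$, but over the algebraically closed field $k$ a $g$ that is not a power of a single linear form yields a decomposable $k[x,f^{-1}]$-module and so contributes no new indecomposable $R$-module.
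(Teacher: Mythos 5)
This theorem is not proved in the paper at all: it is quoted from Crawley-Boevey's ``Tame algebras and generic modules'' \cite{CBtamegen}, so there is no in-paper argument to compare yours against, and the appropriate move in this paper is the citation rather than a reproof. Your sketch is an outline of Crawley-Boevey's own route, and you correctly locate the genuinely hard input --- that ${\rm End}_R(G)$ is local with ${\rm End}_R(G)/{\rm rad}\,{\rm End}_R(G)\cong k(x)$, that the radical splits off, and that each generic $G$ is realised as $k(x)\otimes M$ for a parametrizing bimodule --- in his Sections 4--5, which you in effect also cite rather than prove; granted that, your treatment of (1) and (2) (localise at the finitely many bad parameters, then check reflection of isomorphism, indecomposability and the Auslander--Reiten sequences on the tubes of the Dedekind domain $k[x,f^{-1}]$) follows the same lines as the source.

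There is, however, a genuine flaw in your argument for (3). You cannot in general arrange simultaneously that each $M_{d,i}$ is ``reduced'' in your sense (so that $k(x)\otimes M_{d,i}$ is a generic module of endolength equal to the rank $d$) and that all but finitely many indecomposables of dimension $d$ are the specialisations $k[x]/(x-\lambda)\otimes M_{d,i}$. Already for the Kronecker algebra and $d=4$ this fails: the unique generic module has endolength $2$, so there is no reduced bimodule of rank $4$ at all, and the one-parameter family of dimension-$4$ regular indecomposables (quasi-length two in the homogeneous tubes) is reached only as $k[x,f^{-1}]/\langle (x-\lambda)^2\rangle\otimes B_G$ with $B_G$ of rank $2$. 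This is precisely why the statement allows an arbitrary $g$, and your closing remark --- that a $g$ which is not linear ``contributes no new indecomposable $R$-module'' --- is wrong for $g=(x-\lambda)^j$ with $j>1$: such $g$ give indecomposable $k[x,f^{-1}]$-modules (the very tube modules you use in (2)), and they are needed to account for the dimension-$d$ indecomposables attached to generic modules of endolength properly dividing $d$, since $\dim_k\bigl(k[x,f^{-1}]/\langle g\rangle\otimes B_G\bigr)=\deg(g)\cdot {\rm el}(G)$. The correct shape of (3), as in Crawley-Boevey, lets $G$ range over generics of endolength dividing $d$ and $g$ over powers of primes of $k[x,f^{-1}]$; as written, your reduction to linear $g$ and rank-$d$ reduced bimodules does not go through.
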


\begin{theorem}\label{nondom1}\marginpar{nondom1}
Suppose that $k$ is an algebraically closed field and let $R, S$ be finite-dimensional $k$-algebras with $R$ domestic.  If there is an interpretation functor $I:{\rm Mod}\mbox{-}R\rightarrow {\rm Mod}\mbox{-}S$ such that $\langle I{\rm Mod}\mbox{-}R\rangle={\rm Mod}\mbox{-}S$ then $S$ must be domestic.
\end{theorem}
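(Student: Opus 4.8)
The plan is to combine the bounding-dimension results of Section~4 with Crawley-Boevey's description of tame algebras. By~\ref{onlywildintwild}, since $R$ is tame and $\langle I{\rm Mod}\mbox{-}R\rangle = {\rm Mod}\mbox{-}S$, the algebra $S$ is tame; so, by~\ref{CBgenc}, $S$ has a set of generic modules, and (a standard consequence of~\ref{CBgenc} together with the fact that a tame finite-dimensional algebra over an algebraically closed field is endofinitely tame) only finitely many of these generics have any given endolength. Hence it is enough to bound the endolengths of the generic $S$-modules, and I would argue by contradiction: suppose these endolengths are unbounded.

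First I would push each generic of $S$ into one of finitely many definable subcategories coming from $R$'s one-parameter families. Let $G_1,\dots,G_r$ be the generic $R$-modules---finitely many, since $R$ is domestic---and let $B_i$ be the $(k[x,f_i^{-1}],R)$-bimodule of~\ref{CBgenc} associated with $G_i$. Each composite $J_i := I\circ(-\otimes_{k[x,f_i^{-1}]}B_i):{\rm Mod}\mbox{-}k[x,f_i^{-1}]\rightarrow{\rm Mod}\mbox{-}S$ is an interpretation functor (by~\ref{repembinterp} and closure of interpretation functors under composition); set $\mathcal{E}_i := \langle J_i({\rm Mod}\mbox{-}k[x,f_i^{-1}])\rangle$. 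Given a generic $H$ of $S$ of endolength $e$, \ref{CBgenc} gives infinitely many pairwise non-isomorphic indecomposable $S$-modules of $k$-dimension $e$ in the family of $H$ (the specialisations $k[x,g^{-1}]/(x-\lambda)\otimes B_H$). By~\ref{boundingdimneededtohitfds} applied with $d=e$, each of these is a direct summand of $I$ applied to an indecomposable $R$-module of $k$-dimension at most $b_e$; since $R$ is domestic, all but finitely many such $R$-modules lie in the families of the $G_i$, and since each $IN$ has only finitely many indecomposable summands, infinitely many specialisations of $H$ are obtained through a single $B_i$ and so lie in $\mathcal{E}_i$. Because an interpretation functor maps the definable subcategory generated by a class into the definable subcategory generated by its image, and because by~\ref{dedekindgenericinopen} the Ziegler closure of infinitely many of those specialisations already contains $k(x)\otimes B_H = H$, we conclude $H\in\mathcal{E}_i$. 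Thus every generic of $S$ lies in some $\mathcal{E}_i$, so if their endolengths are unbounded some $\mathcal{E}_{i_0}$ contains generics of unbounded endolength, hence infinitely many of them.

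The problem is now reduced to the following: if $D=k[x,f^{-1}]$ is such a (Dedekind) domain, whose Ziegler spectrum is completely understood and has $k(x)$ as its only generic, and $J:{\rm Mod}\mbox{-}D\rightarrow{\rm Mod}\mbox{-}S$ is an interpretation functor, then $\langle J({\rm Mod}\mbox{-}D)\rangle$ contains only finitely many generic $S$-modules. By~\cite[15.1]{PreMAMS}, every generic $H$ in $\langle J{\rm Mod}\mbox{-}D\rangle$ is a direct summand of $J(X)$ for some indecomposable pure-injective $D$-module $X$. If $X$ has finite length then $X\otimes B_{i_0}$, and hence $J(X)=I(X\otimes B_{i_0})$, is finite-dimensional, so $H$ cannot be generic. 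If $X=k(x)$, then by~\ref{CBgenc} we have $J(k(x))=I(k(x)\otimes B_{i_0})=I(G_{i_0})$, which by the lemma of this section on images of generic modules has finite endolength, hence only finitely many indecomposable summands; only finitely many generics of $S$ arise this way. The remaining possibility is that $X$ is an adic or a Pr\"ufer $D$-module.

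The hard part, and the main obstacle, is controlling $J$ on the adic and Pr\"ufer modules of $D$. If the remaining generics $H_\ell$ of $\mathcal{E}_{i_0}$ were infinitely many, they would be summands of $J(X_\ell)$ for infinitely many distinct adic/Pr\"ufer modules $X_\ell$, whose Ziegler closure, by~\ref{dedekindgenericinopen}, is $\{X_\ell\}\cup\{k(x)\}$, so $\langle\{X_\ell\}\rangle$ has, among its infinite-dimensional points, only the $X_\ell$ and $k(x)$. The plan is to exploit this---together with the lemma that a compact subset of ${\rm Zg}_S$ containing no generic has only finitely many modules of each finite dimension, applied to the closed set $\langle J{\rm Mod}\mbox{-}D\rangle\cap{\rm Zg}_S$, and the fact that $\langle J(k(x))\rangle=\langle I(G_{i_0})\rangle$ consists of finitely many points of finite endolength---to show that the adic/Pr\"ufer part cannot manufacture any generic of $S$ beyond the finitely many already lying in $\langle I(G_{i_0})\rangle$, which is the desired contradiction. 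I expect the genuine difficulty to lie precisely here: showing that an interpretation functor cannot use the infinitely many $\mathfrak{p}$-adic directions of a Dedekind domain to produce more generics of $S$ than its single generic $k(x)$ already forces. Everything outside this step is bookkeeping with the results cited above.
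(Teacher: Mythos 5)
Your argument is incomplete at exactly the point you flag yourself, and that point is the whole theorem, not bookkeeping. After reducing to a single interpretation functor $J=I\circ(-\otimes B_{i_0}):{\rm Mod}\mbox{-}k[x,f^{-1}]\rightarrow{\rm Mod}\mbox{-}S$, you invoke \cite[15.1]{PreMAMS} to say every generic of $S$ in $\langle J({\rm Mod}\mbox{-}k[x,f^{-1}])\rangle$ is a summand of $J(X)$ with $X$ an indecomposable pure-injective over the Dedekind domain, dispose of the finite-length points and of $X=k(x)$, and then leave open the possibility that infinitely many distinct generics of $S$ arise as summands of $J$ applied to the (infinitely many) adic and Pr\"ufer points. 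Nothing in the tools you cite controls $J$ on these infinite-dimensional points: $J(X)$ need not have finite endolength, the compactness lemma does not apply to $\langle J{\rm Mod}\mbox{-}k[x,f^{-1}]\rangle\cap{\rm Zg}_S$ (it will typically contain generics), and the closure structure $\overline{\{X_\ell\}}=\{X_\ell\}\cup\{k(x)\}$ by itself gives no bound on how many generics the images can produce. So as written there is a genuine gap, and your proposed ``plan'' for closing it is not an argument.

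It is worth seeing how the paper sidesteps this entirely: it never asks where the generics of $S$ come from. Instead, assuming $S$ non-domestic (after \ref{onlywildintwild} gives tameness), it notes that $X:=\langle IG_1,\dots,IG_n\rangle\cap{\rm Zg}_S$ is a \emph{finite} set of finite-endolength points, chooses a generic $H$ of $S$ with ${\rm el}(H)$ larger than anything in $X$, and separates $H$ from $X$ by a coherent functor $F$ (so $FH\neq 0$, $F$ vanishes on $X$, in particular $FIG_i=0$ for all $i$). Then the contradiction is obtained by counting \emph{finite-dimensional} modules on both sides: on the $S$-side, composing $F$ with $-\otimes B_H$ and using \ref{dedekindgenericinopen} shows $(F)$ contains infinitely many indecomposable $S$-modules of the single dimension $d={\rm el}(H)$; on the $R$-side, \ref{boundingdimneededtohitfds} forces each of these to be a summand of $IN$ with $\dim N\leq b_d$, while the set of $N\in{\rm Zg}_R$ with $FIN\neq 0$ and endolength $\leq b_d$ is compact and contains no $R$-generic, hence is finite, so only finitely many such summands can occur. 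Replacing your attempt to trace generics through adic and Pr\"ufer points by this counting of finite-dimensional points in a basic open set is the missing idea; without it (or a genuinely new argument bounding the generics in $\langle J{\rm Mod}\mbox{-}k[x,f^{-1}]\rangle$), your proof does not go through.
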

\begin{proof}
Since $R$ is domestic, by \ref{onlywildintwild}, $S$ must be tame.  Let $G_1,\dots ,G_n$ be the generic $R$-modules.  As noted already, each $IG_i$ is of finite endolength, hence the closed subset
\[X:=\langle IG_1,\dots ,IG_n\rangle\cap\text{Zg}_S\]
is finite. Thus, since $S$ is non-domestic, there is a generic $S$-module $H$ of endolength strictly greater than the endolength of any module in $X$. Since $X$ is closed and $H\notin X$, there is a basic Ziegler-open set containing $H$ and not intersecting $X$, so there is a pp-defined = $\varinjlim$-commuting extension, $F$, of a finitely presented functor from ${\rm Mod}\mbox{-}S$ to $ {\bf Ab}$ such that $FH\neq 0$ and $FM=0$ for all $M\in X$.

\vspace{4pt}

\noindent\textbf{Claim 1}: For each $m\in\mathbb{N}$, the set $\{N\in\text{Zg}_R \ \vert \  FIN\neq 0  \text{ and } \text{el} N\leq m\}$ is finite.

This set is compact since the condition $FI(-) \neq 0$ defines a compact open subset of ${\rm Zg}_R$ and the set of points of endolength $\leq m$ is closed.  But this set contains no generic point, since $FIG_i = 0$ for each $i$, hence it is finite.

\vspace{4pt}

\noindent\textbf{Claim 2}: There is $d\in \mathbb{N}$ such that the set $\{C\in {\rm ind}\mbox{-}S \ \vert \  FC\neq 0\}$ contains infinitely many modules of dimension $d$.

Let $A_H:=k[x,f^{-1}]$ and let $B_H$ be as in \ref{CBgenc}. Since $-\otimes B_H:{\rm Mod}\mbox{-}A_H\rightarrow {\rm Mod}\mbox{-}S$ is an interpretation functor (\ref{repembinterp}), the composition, $F'$, of this with $F$ is a finitely presented functor i.e. is given by a pp-pair. Since $F(H)\neq 0$, we have $F'(k(x))\neq 0$. Thus, by Remark \ref{dedekindgenericinopen}, $F'((A_H/\mathfrak{p})\otimes B_H)\neq 0$ for almost all non-zero primes $\mathfrak{p}$ of $ A_H$.  Since each factor $A_H/\mathfrak{p}$ is 1-dimensional, $\dim_k\big((A_H/\mathfrak{p})\otimes B_H\big)=\text{el} H$. Since $-\otimes B_H$ preserves isomorphism classes, there are, therefore, infinitely many pairwise non-isomorphic indecomposable $S$-modules $C$ of dimension $\text{el} H$ with $FC\neq 0$.

To finish the proof of the theorem, let $d\in \mathbb{N}$ be such that the set $\{C\in {\rm ind}\mbox{-}S \ \vert \  FC\neq 0\}$ contains infinitely many modules of dimension $d$. Let $b_d\in\mathbb{N}$ be such that for all $C\in{\rm ind}\mbox{-}S$ with $\dim C\leq d$ there exists $N\in{\rm ind}\mbox{-}S$ with $\dim N\leq b_d$ such that $C|IN$ (\ref{boundingdimneededtohitfds}).  By Claim 1, there are only  finitely many $N\in{\rm ind}\mbox{-}S$ with $\dim N\leq b_d$ and $FIN\neq 0$, hence only finitely many direct summands of these, contradiction.

\end{proof}




\end{document}